\newtheorem{theorem}{Theorem}[section]
\newtheorem{corollary}[theorem]{Corollary}
\newtheorem{proposition}[theorem]{Proposition}
\newtheorem{definition}[theorem]{Definition}
\theoremstyle{definition}
\newtheorem{remark}[theorem]{Remark}
\newtheorem{example}[theorem]{Example}
\newcommand{\wt}[1]{\widetilde{#1}}
\newcommand{\Cinf}{\ensuremath{\mathcal{C}^\infty}}
\newcommand{\Cinfc}{\ensuremath{\mathcal{C}^\infty_{\text{c}}}}
\newcommand{\D}{\ensuremath{{\cal D}}}
\renewcommand{\S}{\mathscr{S}}
\newcommand{\E}{\ensuremath{{\cal E}}}
\newcommand{\OM}{\ensuremath{{\cal O}_\mathrm{M}}}
\newcommand{\LL}{\mathcal{L}}
\newcommand{\mb}[1]{\ensuremath{\mathbb{#1}}}
\newcommand{\N}{\mb{N}}
\newcommand{\R}{\mb{R}}
\newcommand{\C}{\mb{C}}
\newcommand{\G}{\ensuremath{{\cal G}}}
\newcommand{\Gt}{\ensuremath{{\cal G}_\tau}}
\newcommand{\Gc}{\ensuremath{{\cal G}_\mathrm{c}}}
\newcommand{\Gcinf}{\ensuremath{{\cal G}^\infty_\mathrm{c}}}
\newcommand{\GS}{\G_{{\, }\atop{\hskip-4pt\scriptstyle\S}}\!}
\newcommand{\EM}{\ensuremath{{\cal E}_{M}}}
\newcommand{\Et}{\ensuremath{{\cal E}_{\tau}}}
\newcommand{\Nt}{\ensuremath{{\cal N}_{\tau}}}
\newcommand{\Neg}{\mathcal{N}}
\newcommand{\Ginf}{\ensuremath{\G^\infty}}
\newcommand{\Gtinf}{\mathcal{G}^{\infty}_\tau}
\newcommand{\GSinf}{\G^\infty_{{\, }\atop{\hskip-3pt\scriptstyle\S}}}
\newcommand{\lara}[1]{\langle #1 \rangle}
\newcommand{\WF}{\mathrm{WF}}
\newcommand{\supp}{\operatorname{supp}}
\newcommand{\singsupp}{\operatorname{sing\,supp\hspace{0.5pt}}}
\newcommand{\Char}{\ensuremath{\text{Char}}}
\newcommand{\zs}{\setminus 0}
\newcommand{\CO}[1]{\ensuremath{T^*(#1) \zs}}
\newcommand{\ssc}{\mathrm{sc}}
\newcommand{\musupp}{\mu\mathrm{supp}}
\newfont{\bigmath}{cmr12 at 13pt}
\newfont{\grecomath}{cmmi12 at 15pt}
\newcommand{\esp}{\mathrm{e}}
\newcommand{\beq}{\begin{equation}}
\newcommand{\eeq}{\end{equation}}
\newcommand{\eps}{\varepsilon}
\newcommand{\Om}{\Omega}
\newcommand{\mM}{\mathcal{M}}
\newcommand{\mF}{\mathcal{F}}
\newcommand{\MPhi}{\mathcal{M}_\Phi}
\newcommand{\rfunc}{{\rm{basic}}}
\newcommand{\dslash}{d\hspace{-0.4em}{ }^-\hspace{-0.2em}}
\begin{document}

\title{{\bf Generalised Fourier integral operator methods for hyperbolic equations with singularities}}
\author{Claudia Garetto\footnote{Supported by JRF, Imperial College London}\\[0.1cm]
Department of Mathematics\\
Imperial College London\\
\texttt{c.garetto@imperial.ac.uk}\\
\  \\
Michael Oberguggenberger\footnote{Partially supported by FWF (Austria), grant Y237}\\[0.1cm]
Institut f\"ur Grundlagen der Bauingenieurwissenschaften\\
Leopold-Franzens-Universit\"at Innsbruck\\
\texttt{michael.oberguggenberger@uibk.ac.at}
}
\maketitle
\begin{abstract}
This article addresses linear hyperbolic partial differential equations and pseudodifferential equations with strongly singular coefficients and data, modelled as members of algebras of generalised functions. We employ the recently developed theory of generalised Fourier integral operators to construct parametrices for the solutions and to describe propagation of singularities in this setting. As required tools, the construction of generalised solutions to eikonal and transport equations is given and results on the microlocal regularity of the kernels of generalised Fourier integral operators are obtained.\\

AMS 2000 MSCS: 35S30, 46F30; 35B65.
\end{abstract}

\setcounter{section}{-1}
\section{Introduction}

This article is part of a series of papers aiming at developing microlocal analysis in Colombeau algebras of generalised functions \cite{Garetto:06a, Garetto:ISAAC07, GGO:03, GH:05, GHO:09, HOP:06, NedPilSca:98, O:07}. While these algebras originally have been introduced for handling nonlinear partial differential equations with distributional data, they have turned out to be equally useful for treating linear hyperbolic equations with strongly singular coefficients and data in the past decade. Consider the Cauchy problem for the linear hyperbolic system
\beq
\label{Cauchy_1}
\begin{split}
\partial_t u &=\sum_{j=1}^m A_j(x,t)\partial_j u+B(x,t)u+F(x,t),\qquad (x,t)\in\R^{m+1},\\
u(x,0)&=g(x),\qquad\qquad\qquad\qquad\qquad\qquad\qquad\qquad x\in\R^m.
\end{split}
\eeq
In case the initial data $g$ or the driving term $F$ are distributions and the coefficient matrices are, e.g., discontinuous, the system formally contains multiplicative products of distributions. Simple examples show \cite{Hurd:68} that such systems may fail to have solutions in the sense of distributions (referred to as classical solutions in the sequel). For second order strictly hyperbolic equations, the minimal regularity of the principal coefficients is log-Lipschitz regularity in time for admitting unique classical solutions \cite{Colombini:79, Colombini:95}. For transport equations, one may go down to BV-coefficients and measure data (see the recent surveys \cite{Ambrosio:08, Haller:08}). Differentiating the equation and trying to establish an equation for the higher derivatives of the formal solution already brings one beyond the admissible non-regularity, as will, e.g., letting $B$ be a delta potential or $F$ a space-time white noise.

The Colombeau algebra $\G(\R^{m+1})$ is a differential algebra containing the space of distributions ${\mathcal D}'(\R^{m+1})$ as a subspace and thus it provides a framework in which all operations arising in (\ref{Cauchy_1}) are meaningful. Indeed, existence and uniqueness of solutions to problem (\ref{Cauchy_1}) in the Colombeau algebra (as well as its dual) have been proven under various conditions: $A_j$ and $B$ real valued $n\times n$ matrices with entries in $\G(\R^{m+1})$, $A_j$ symmetric \cite{LO:91}; symmetric hyperbolic systems of pseudodifferential operators with Colombeau symbols \cite{GH:03, O:07}, strictly hyperbolic systems of pseudodifferential operators \cite{GO:10b}.

In the classical setting, Fourier integral operators (FIOs) and microlocal methods are used to study the dependence of the solution on the initial data, in particular, propagation of singularities. Indeed, it is the purpose of this paper to derive a generalised FIO-representation of the Colombeau solution and to predict its (generalised) wave front set.
In the Colombeau framework, pseudodifferential operators and their application to microlocal regularity and hypoellipticity have been developed in \cite{Garetto:06a, GGO:03, GH:05, GH:03, HOP:06, NedPilSca:98}, Fourier integral operators with generalised phase and amplitude have been developed in \cite{Garetto:ISAAC07, GHO:09}. The present paper initiates the application of generalised FIOs to hyperbolic equations with Colombeau coefficients. We shall show how to construct FIO-parametrices for transport equations and scalar hyperbolic pseudodifferential equations. Further, the generalised wave front sets of the kernels of generalised FIOs are determined, as well as the propagation of the generalised wave front sets through application of a generalised FIO. Finally, we study the generalised Hamlitonian flow and prove various results on propagation of singularities.

In short, it is the purpose of this paper to solve generalised strictly hyperbolic problems, which might be generated by singular coefficients and data, by means of FIO-techniques placed in the Colombeau context. Generalised pseudodifferential and Fourier integral operators as well as the microlocal tools in the Colombeau framework that have been developed over the past years, provide new and powerful tools and techniques which were not available at the time of the first work on hyperbolic generalised systems and equations in \cite{GH:03, LO:91, O:89, O:92}. On the other hand, the application of FIOs to generalised strictly hyperbolic problems is very natural also in the Colombeau context and will have interesting connections with systems with multiple characteristics and singularities. This and the singularity structure in the case of systems and of higher order equations is the subject of ongoing research and will be published elsewhere.

We comment on some technical aspects of the theory. Elements of the Colombeau algebra $\G(\R^{m+1})$ are equivalence classes of nets $(u_\eps)_\eps$ of ${\mathcal C}^\infty$-functions satisfying asymptotic bounds of order $\eps^{-N}$ in terms of the local $L^\infty$-seminorms of all derivatives as $\eps\to 0$. The generalised wave front set is also defined by means of decay in the dual variable and uniform asymptotic bounds as $\eps\to 0$. The existence theory is based on energy estimates and Gronwall's inequality and requires more restrictive asymptotic bounds of the type $|\log\varepsilon|$ at certain places; propagation of regularity may require additional so-called slow scale estimates. The necessity of such stronger bounds has been argued in \cite{GGO:03, HO:03, O:92}. When the coefficients are discontinuous functions or distributions, corresponding members of the Colombeau algebra satisfying the stronger bounds can always be constructed by suitable regularisation \cite{O:89}. Admitting distributional coefficients in hyperbolic equations may lead to infinite propagation speed \cite{LO:91}. To avoid this phenomenon, one may assume that the coefficients are constant for large $|x|$, as is often done in the classical case as well. Matters are even more simplified by assuming that the coefficients are compactly supported in $x$. This still allows one to model and study singularities as strong as desired (in the non-constant regime), but facilitates the application of Sobolev estimates.

It may also be useful to employ Colombeau spaces with asymptotic estimates based on Sobolev spaces. It has been shown in \cite{GH:03} that the first order scalar hyperbolic pseudodifferential problem
\beq
\label{Cauchy_2}
\partial_t u-i\,a(t,x,D_x)u=f,\qquad\qquad u(0)=g,
\eeq
with a real valued principal part has a unique solution in the Colombeau algebra ${\mathcal G}_{2,2}([0,T]\times\R^n)$ based on $H^\infty$. Assuming that the symbol $a$ is compactly supported in $x$, the Sobolev embedding theorem yields existence and uniqueness of a Colombeau solution in $\G([0,T]\times\R^n)$ as well.

Proofs of the mentioned existence and uniqueness results are based on a priori $L^2$-estimates in the representing nets of smooth functions. These methods have been successful regarding the well-posedness of the Cauchy problems under consideration but do not provide deep qualitative information on the solution. This already applies to the classical case of hyperbolic systems with $\Cinf$-coefficients and has been one of the main motivations for the constructive FIO approach introduced by Duistermaat and H\"ormander \cite{Dui:96, DH:72, Hoer:71}, which we extend here to the Colombeau framework. For a survey on local and global regularity of Fourier integral operators on $L^p$ spaces as well as on Colombeau spaces we refer the reader to \cite{Ruzhansky:09}.

The plan of the paper is as follows. In Section 1, we recall required notions from the Colombeau theory of generalised functions, in particular, microlocal tools, generalised symbols and phase functions and generalised FIOs. Section 2 is devoted to scalar first order hyperbolic partial differential equations with Colombeau coefficients. We show how the generalised solution can be represented by means of a generalised FIO. This includes the construction of Colombeau solutions to the eikonal equations and to the transport equation for the generalised symbol. In Section 3, we treat the case of first order hyperbolic pseudodifferential equations whose principal part depends on time only. This has the advantage that generalised solutions to the eikonal equation can be given explicitly (the more intricate $t$- and $x$-dependent case is postponed to work in preparation on hyperbolic problems with singularities). We solve an infinite system of transport equations yielding the asymptotic expansion of the generalised symbol, and construct an FIO parametrix of the generalised solution. Section 4 addresses microlocal properties of the kernel of a generalised FIO as well as its action on a Colombeau generalised functions. This allows one to compute the generalised wave front set (in space and time) of solutions to first order hyperbolic partial differential equations with non-smooth coefficients depending on time. Further, the spatial wave front set of solutions to first order partial differential equations at fixed time is computed, when the coefficients are generalised functions depending on space and time, in case the Hamiltonian flow has a limit. We show that the generalised wave front set is invariant under the Hamiltonian flow for time-dependent coefficients. The paper concludes with some explicit examples involving jump discontinuities and delta functions in the coefficients.

\section{Basic notions}
\subsection{Basic notions of Colombeau theory}
\label{sec_basic_co}
This section gives some background on Colombeau techniques used in the sequel of this paper. As main sources we refer to \cite{Garetto:05b, GGO:03, GH:05, GKOS:01}.

{\bf Nets of complex numbers.}
A net $(u_\eps)_\eps$ in $\C^{(0,1]}$ is said to be \emph{strictly nonzero} if there exist $r>0$ and $\eta\in(0,1]$ such that $|u_\eps|\ge \eps^r$ for all $\eps\in(0,\eta]$. For several regularity issues we will make use of the concept of \emph{slow scale net (s.s.n)}. A slow scale net is a net $(r_\eps)_\eps\in\C^{(0,1]}$ such that
\[
\forall q\ge 0\,\qquad\qquad\qquad\qquad |r_\eps|^q=O(\eps^{-1})\qquad \text{as $\eps\to 0$}.
\]
A net $(u_\eps)_\eps$ in $\C^{(0,1]}$ is said to be \emph{slow scale-strictly nonzero} if there exist a slow scale net $(s_\eps)_\eps$ and $\eta\in(0,1]$ such that $|u_\eps|\ge 1/s_\eps$ for all $\eps\in(0,\eta]$.

{\bf $\wt{\C}$-modules of generalised functions based on a locally convex topological vector space.}
The most common algebras of generalised functions of Colombeau type as well as the spaces of generalised symbols we deal with are introduced by referring to the following general models.

Let $E$ be a locally convex topological vector space topologised through the family of seminorms $\{p_i\}_{i\in I}$. The elements of
\[
\begin{split}
\mM_E &:= \{(u_\eps)_\eps\in E^{(0,1]}:\, \forall i\in I\,\, \exists N\in\N\quad p_i(u_\eps)=O(\eps^{-N})\, \text{as}\, \eps\to 0\},\\
\mM^\ssc_E &:=\{(u_\eps)_\eps\in E^{(0,1]}:\, \forall i\in I\,\, \exists (\omega_\eps)_\eps\, \text{s.s.n.}\quad p_i(u_\eps)=O(\omega_\eps)\, \text{as}\, \eps\to 0\},\\
\mM^\infty_E &:=\{(u_\eps)_\eps\in E^{(0,1]}:\, \exists N\in\N\,\, \forall i\in I\quad p_i(u_\eps)=O(\eps^{-N})\, \text{as}\, \eps\to 0\},\\
\Neg_E &:= \{(u_\eps)_\eps\in E^{(0,1]}:\, \forall i\in I\,\, \forall q\in\N\quad p_i(u_\eps)=O(\eps^{q})\, \text{as}\, \eps\to 0\},
\end{split}
\]

are called $E$-moderate, $E$-moderate of slow scale type, $E$-regular and $E$-negligible, respectively. We define the space of \emph{generalised functions based on $E$} as the factor space $\G_E := \mM_E / \Neg_E$.

The ring of \emph{complex generalised numbers}, denoted by $\wt{\C}$, is obtained by taking $E=\C$. Note that $\wt{\C}$ is not a field since by Theorem 1.2.38 in \cite{GKOS:01} only the elements which are strictly nonzero (i.e. the elements which have a representative strictly nonzero) are invertible and vice versa. Note that all the representatives of $u\in\wt{\C}$ are strictly nonzero once we know that there exists at least one which is strictly nonzero. When $u$ has a representative which is slow scale-strictly nonzero we say that it is \emph{slow scale-invertible}.

For any locally convex topological vector space $E$ the space $\G_E$ has the structure of a $\wt{\C}$-module. The ${\C}$-module $\G^\ssc_E:=\mM^\ssc_E/\Neg_E$ of \emph{slow scale regular generalised functions} and the $\wt{\C}$-module $\Ginf_E:=\mM^\infty_E/\Neg_E$ of \emph{regular generalised functions} are subspaces of $\G_E$ whose role is to describe different notions of regularity. We use the notation $u=[(u_\eps)_\eps]$ for the class $u$ of $(u_\eps)_\eps$ in $\G_E$. This is the usual way adopted in the paper to denote an equivalence class.

{\bf The most common Colombeau algebras.}
The Colombeau algebra $\G(\Om)=\EM(\Om)/\Neg(\Om)$ (see \cite{GKOS:01}) can be obtained as a ${\wt{\C}}$-module of $\G_E$-type by choosing $E=\Cinf(\Om)$. From a structural point of view $\Om\to\G(\Om)$ is a fine sheaf of differential algebras on $\R^n$. $\Gc(\Om)$ denotes the Colombeau algebra of generalised functions with compact support.

Regularity theory in the Colombeau context as initiated in \cite{O:92} is based on the subalgebra $\Ginf(\Om)$ of all elements $u$ of $\G(\Om)$ having a representative $(u_\eps)_\eps$ belonging to the set
\[
\EM^\infty(\Om):=\{(u_\eps)_\eps\in\E[\Om]:\ \forall K\Subset\Om\, \exists N\in\N\, \forall\alpha\in\N^n\quad \sup_{x\in K}|\partial^\alpha u_\eps(x)|=O(\eps^{-N})\, \text{as $\eps\to 0$}\}.
\]
The $\Ginf$-singular support of $u\in\G(\Om)$ ($\singsupp_{\Ginf}\, u$) is defined as the complement of the set of points $x$ such that $u|_V\in\Ginf(V)$ for some open neighborhood $V$ of $x$.

The intersection $\Ginf(\Om)\cap\Gc(\Om)$ will be denoted by $\Gcinf(\Om)$. In the course of the paper, for issues related to the Fourier transform, we will make use of the Colombeau algebra $\GS(\R^n)=\G_{\S(\R^n)}$ of generalised functions based on $\S(\R^n)$ and of the corresponding regular version $\GSinf(\R^n)$. Finally, we recall that the Colombeau algebra $\Gt(\R^n)$ of tempered generalised functions is defined as $\Et(\R^n)/\Nt(\R^n)$, where $\Et(\R^n)$ is the space
\[
\{(u_\eps)_\eps\in\OM(\R^n)^{(0,1]}:\ \forall\alpha\in\N^n\, \exists N\in\N\quad \sup_{x\in\R^n}(1+|x|)^{-N}|\partial^\alpha u_\eps(x)|=O(\eps^{-N})\ \text{as $\eps\to 0$}\}
\]
of $\tau$-moderate nets and $\Nt(\R^n)$ is the space
\[
\{(u_\eps)_\eps\in\OM(\R^n)^{(0,1]}:\ \forall\alpha\in\N^n\, \exists N\in\N\, \forall q\in\N\quad \sup_{x\in\R^n}(1+|x|)^{-N}|\partial^\alpha u_\eps(x)|=O(\eps^{q})\ \text{as $\eps\to 0$}\}
\]
of $\tau$-negligible nets. The subalgebra $\Gtinf(\R^n)$ of regular and tempered generalised functions is the quotient $\Et^{\infty}(\R^n)/\Nt(\R^n)$, where $\Et^{\infty}(\R^n)$ is the set of all $(u_\eps)_\eps\in\OM(\R^n)^{(0,1]}$ satisfying the following condition:
\[
\exists N\in\N\, \forall\alpha\in\N^n\, \exists M\in\N\qquad \sup_{x\in\R^n}(1+|x|)^{-M}|\partial^\alpha u_\eps(x)|=O(\eps^{-N}).
\]

{\bf Special types of Colombeau generalised functions in $\G(\R^n)$.}
An element of the Colombeau algebra is {\em real valued} if it is represented by a net of real valued smooth functions. We recall that $u\in\G(\R^n)$ is
\begin{itemize}
\item[-] \emph{of logarithmic type} if there exists a representative $(u_\eps)_\eps$ with the property that for all $K\Subset\R^n$
\[
\sup_{x\in K}|u_\eps(x)|=O(\log(\eps^{-1}))\qquad\quad \text{as $\eps\to 0$};
\]
\item[-] \emph{of slow scale logarithmic type} if there exists a representative $(u_\eps)_\eps$ with the property that for all $K\Subset\R^n$ there exist a slow scale net $(\mu_\eps)_\eps$ such that
\[
\sup_{x\in K}|u_\eps(x)|=O(\log(\mu_\eps))\qquad\quad \text{as $\eps\to 0$};
\]
\item[-] \emph{slow scale regular} if there exists a representative $(u_\eps)_\eps$ with the property that for all $K\Subset\R^n$, for all $\alpha\in\N^n$ there exist a slow scale net $(\mu_\eps)_\eps$ such that
\[
\sup_{x\in K}|\partial^\alpha u_\eps(x)|=O(\mu_\eps)\qquad\quad \text{as $\eps\to 0$}.
\]
\end{itemize}
Clearly the previous properties hold for all representatives of $u$ once they are known to be valid for one.

{\bf Topological dual of a Colombeau algebra.}
A topological theory of Colombeau algebras has been developed in \cite{Garetto:05a, Garetto:05b}. The duality theory for $\wt{\C}$-modules presented in \cite{Garetto:05b} in the framework of topological and locally convex topological $\wt{\C}$-modules, has provided the theoretical tools for dealing with the topological duals of the Colombeau algebras $\Gc(\Om)$ and $\G(\Om)$. We recall that $\LL(\G(\Om),\wt{\C})$ and $\LL(\Gc(\Om),\wt{\C})$ denote the space of all $\wt{\C}$-linear and continuous functionals on $\G(\Om)$ and $\Gc(\Om)$, respectively. For the choice of topologies given in \cite{Garetto:05a} one has the following chains of continuous embeddings:
\beq
\label{chain_1}
\Ginf(\Om)\subseteq\G(\Om)\subseteq\LL(\Gc(\Om),\wt{\C}),
\eeq
\beq
\label{chain_2}
\Gcinf(\Om)\subseteq\Gc(\Om)\subseteq\LL(\G(\Om),\wt{\C}),
\eeq
\beq
\label{chain_3}
\LL(\G(\Om),\wt{\C})\subseteq\LL(\Gc(\Om),\wt{\C}).
\eeq
In \eqref{chain_1} and \eqref{chain_2} the inclusion in the dual is given via integration $\big(u\to\big( v\to\int_\Om u(x)v(x)dx\big)\big)$ (for definitions and properties of the integral of a Colombeau generalised functions see \cite{GKOS:01}) while the embedding in \eqref{chain_3} is determined by the inclusion $\Gc(\Om)\subseteq\G(\Om)$. Since $\Om\to\LL(\Gc(\Om),\wt{\C})$ is a sheaf we can define the \emph{support of a functional $T$} (denoted by $\supp\, T$). In analogy with distribution theory we have that $\LL(\G(\Om),\wt{\C})$ can be identified with the set of functionals in $\LL(\Gc(\Om),\wt{\C})$ having compact support.

For questions related to regularity theory and microlocal analysis particular attention is given to those functionals in $\LL(\Gc(\Om),\wt{\C})$ and $\LL(\G(\Om),\wt{\C})$ which have a ``basic'' structure. In detail, we say that $T\in\LL(\Gc(\Om),\wt{\C})$ is ${\rfunc}$ if there exists a net $(T_\eps)_\eps\in\D'(\Om)^{(0,1]}$ fulfilling the following condition: for all $K\Subset\Om$ there exist $j\in\N$, $c>0$, $N\in\N$ and $\eta\in(0,1]$ such that
\[
\forall f\in\D_K(\Om)\, \forall\eps\in(0,\eta]\qquad\quad
|T_\eps(f)|\le c\eps^{-N}\sup_{x\in K,|\alpha|\le j}|\partial^\alpha f(x)|
\]
and $Tu=[(T_\eps u_\eps)_\eps]$ for all $u\in\Gc(\Om)$.\\
In the same way a functional $T\in\LL(\G(\Om),\wt{\C})$ is said to be $\rfunc$ if there exists a net  $(T_\eps)_\eps\in\E'(\Om)^{(0,1]}$ such that there exist $K\Subset\Om$, $j\in\N$, $c>0$, $N\in\N$ and $\eta\in(0,1]$ with the property
\[
\forall f\in\Cinf(\Om)\, \forall\eps\in(0,\eta]\qquad\quad
|T_\eps(f)|\le c\eps^{-N}\sup_{x\in K,|\alpha|\le j}|\partial^\alpha f(x)|
\]
and $Tu=[(T_\eps u_\eps)_\eps]$ for all $u\in\G(\Om)$. The sets of basic functionals on $\Gc(\Om)$ and $\G(\Om)$ are denoted by $\LL_{\rm{b}}(\Gc(\Om),\wt{\C})$ and $\LL_{\rm{b}}(\G(\Om),\wt{\C})$, respectively.

{\bf The Fourier transform on $\GS(\R^n)$, $\LL(\GS(\R^n),\wt{\C})$ and $\LL(\G(\Om),\wt{\C})$.}
The Fourier transform on $\GS(\R^n)$ is defined by the corresponding transformation at the level of representatives, as follows:
\[
\mF:\GS(\R^n)\to\GS(\R^n):u\to [(\widehat{u_\eps})_\eps].
\]
$\mF$ is a $\wt{\C}$-linear continuous map from $\GS(\R^n)$ into itself which extends to the dual in a natural way. In detail, we define the Fourier transform of $T\in\LL(\GS(\R^n),\wt{\C})$ as the functional in $\LL(\GS(\R^n),\wt{\C})$ given by
\[
\mF(T)(u)=T(\mF u).
\]
As shown in \cite[Remark 1.5]{Garetto:06a} $\LL(\G(\Om),\wt{\C})$ is embedded in $\LL(\GS(\R^n),\wt{\C})$ by means of the map
\[
\LL(\G(\Om),\wt{\C})\to\LL(\GS(\R^n),\wt{\C}):T\to \big(u\to T(({u_\eps}{\vert_\Om})_\eps+\Neg(\Om))\big).
\]
In particular, when $T$ is a basic functional in $\LL(\G(\Om),\wt{\C})$ we have from \cite[Proposition 1.6, Remark 1.7]{Garetto:06a} that the Fourier transform of $T$ is the tempered generalised function obtained as the action of $T(y)$ on $\esp^{-iy\xi}$, i.e., $\mF(T)=T(\esp^{-i\cdot\xi})=(T_\eps(\esp^{-i\cdot\xi}))_\eps+\Nt(\R^n)$. More precisely, $\mF(T)$ belongs to $\Ginf_\tau(\R^n)$.

{\bf Microlocal analysis in the Colombeau context: the $\Ginf$-wave front set for generalised functions and functionals.}
For an introduction to microlocal analysis in the Colombeau context we refer to \cite{Garetto:06a, GH:05}. Here we only  recall those microlocal concepts which we will employ in the final section of the paper. The $\Ginf$-wave front set of $u\in\Gc(\Om)$ ($\WF_\Ginf u$) is defined as the complement of the set of points $(x_0,\xi_0)\in\CO{\Om}$ fulfilling the following property: there exists a representative $(u_\eps)_\eps$ of $u$, a cut-off function $\varphi\in\mathcal{C}^\infty_{\rm{c}}(\Om)$ with $\varphi(x_0)=1$, a conic neighborhood $\Gamma$ of $\xi_0$ and a number $N$ such that for all $l\in\mathbb{R}$
\[
\sup_{\xi\in\Gamma}\,\lara{\xi}^l |\widehat{\varphi u_\eps}(\xi)| = O(\eps^{-N})\qquad \text{as}\ \eps\to 0.
\]
By construction $\pi_\Om \WF_{\Ginf} u=\singsupp_{\Ginf}u$. In addition, Theorem 3.11 in \cite{GH:05} shows that $\WF_{\Ginf}$ coincides with the set ${\rm{W}}_{{\rm{cl}},\Ginf}(u):=\bigcap_{Au\in\Ginf(\Om)}\Char(A)$, where the intersection is taken over all the standard properly supported pseudodifferential operators $A\in\Psi^0(\Om)$ such that $Au\in\Ginf(\Om)$. The adjective standard refers to symbols which do not depend on the parameter $\eps$ but belong to the usual H\"{o}rmander classes. This pseudodifferential characterisation provides the blueprint for extending the notion of wave front set from $\G(\Om)$ to the dual $\LL(\Gc(\Om),\wt{\C})$. In detail, for $T\in\LL(\Gc(\Om),\wt{\C})$ we define
\beq
\label{wf_dual}
\WF_{\Ginf}(T)={\rm{W}}_{{\rm{cl}},\Ginf}(T):=\bigcap_{AT\in\Ginf(\Om)}\Char(A)
\eeq
where the intersection is taken over all the standard properly supported pseudodifferential operators $A\in\Psi^0(\Om)$ such that $AT\in\Ginf(\Om)$. It follows that $\pi_{\Om}(\WF_{\Ginf}T)=\singsupp_{\Ginf} T$.

A useful characterisation of $\WF_{\Ginf}(T)$, in terms of estimates at the Fourier transform level, is valid when $T$ is basic. It involves the sets of generalised functions $\Ginf_{\S\hskip-2pt,0}(\Gamma)$, where $\Gamma$ is a conic subset of $\R^n\setminus 0$, of all tempered generalised functions $u$ having a representative $(u_\eps)_\eps$ fulfilling the following condition:
\[
\exists N\in\N\, \forall l\in\R\qquad\quad \sup_{\xi\in\Gamma}\lara{\xi}^l|u_\eps(\xi)|=O(\eps^{-N})\quad \text{as $\eps\to 0$}.
\]
Let $T\in\LL(\Gc(\Om),\wt{\C})$. Theorem 3.13 in \cite{Garetto:06a} shows that $(x_0,\xi_0)\not\in\WF_{\Ginf} T$ if and only if there exists a conic neighborhood $\Gamma$ of $\xi_0$ and a cut-off function $\varphi\in\Cinfc(\Om)$ with $\varphi(x_0)=1$ such that $\mF(\varphi T)\in\Ginf_{\S\hskip-2pt,0}(\Gamma)$.

\subsection{Generalised Fourier integral operators: generalised symbols and phase functions}
In this subsection we collect some basic notions concerning generalised pseudodifferential and Fourier integral operators. For a detailed presentation we refer to \cite{Garetto:ISAAC07, GHO:09}.

{\bf Generalised symbols.}
Let $\Om$ be an open subset of $\R^n$. By a \emph{generalised symbols} of order $m$ we mean an element of the space $\G_E$ based on $E=S^m_{\rho,\delta}(\Om\times\R^p)$. Analogously, $\G^\ssc_E$ with $E=S^m_{\rho,\delta}(\Om\times\R^p)$ is the space of slow scale regular generalised symbol. We say that a generalised symbol $a$ of order $m$ is \emph{regular} if it has a representative $(a_\eps)_\eps$ fulfilling the following condition:
\[
\forall K\Subset\Om\, \exists N\in\N\, \forall j\in\N\qquad\qquad |a_\eps|^{(m)}_{K,j}=\sup_{|\alpha|+|\beta|\le j} |a_\eps|^{(m)}_{K,\alpha,\beta}=O(\eps^{-N})\quad \text{as $\eps\to 0$}.
\]
A notion of asymptotic expansion for generalised symbols has been introduced in \cite[Subsection 2.5]{Garetto:ISAAC07} and employed in developing a complete symbolic calculus for generalised pseudodifferential operators, see \cite{Garetto:ISAAC07, GGO:03}. Finally, we recall that the \emph{conic support} of a generalised symbol $a$ of order $m$ is the complement of the set of points $(x_0,\xi_0)\in\Om\times\R^p$ such that there exists a relatively compact open neighborhood $U$ of $x_0$, a conic open neighborhood $\Gamma$ of $\xi_0$ and a representative $(a_\eps)_\eps$ of $a$ satisfying the condition
\begin{equation}
\label{cond_conic_supp}
\forall\alpha\in\N^p\, \forall\beta\in\N^n\, \forall q\in\N\quad \sup_{x\in U,\xi\in\Gamma}\lara{\xi}^{-m+\rho|\alpha|-\delta|\beta|}|\partial^\alpha_\xi\partial^\beta_x a_\eps(x,\xi)|=O(\eps^q)\quad \text{as $\eps\to 0$}.
\end{equation}
By definition, ${\rm{cone\, supp}}\, a$ is a closed conic subset of $\Om\times\R^p$.

{\bf Generalised phase functions.}
A \emph{phase function} $\phi(y,\xi)$ on $\Om\times\R^p$ is a smooth function on $\Om\times\R^p\setminus 0$, real valued, positively homogeneous of degree $1$ in $\xi$ such that $\nabla_{y,\xi}\phi(y,\xi)\neq 0$ for all $y\in\Om$ and $\xi\in\R^p\setminus 0$. We denote the set of all phase functions on $\Om\times\R^p$ by $\Phi(\Om\times\R^p)$ and the set of all nets in $\Phi(\Om\times\R^p)^{(0,1]}$ by $\Phi[\Om\times\R^p]$. We recall that $S^1_{\rm{hg}}(\Om\times\R^p\setminus 0)$ is the space of symbols on $\Om\times\R^p\setminus 0$ homogeneous of order $1$ in $\xi$, i.e. $ \sup_{x\in K,\xi\in\R^p\setminus 0}|\xi|^{-1+\alpha}|\partial^\alpha_\xi\partial^\beta_x a(x,\xi)|<\infty$ for all $K\Subset\Om$, for all $\alpha\in\N^p$ and $\beta\in\N^n$.
\begin{definition}
\label{def_phase_moderate}
An element of $\mathcal{M}_\Phi(\Om\times\R^p)$ is a net $(\phi_\eps)_\eps\in\Phi[\Om\times\R^p]$ satisfying the conditions:
\begin{itemize}
\item[(i)] $(\phi_\eps)_\eps\in\mathcal{M}_{S^1_{\rm{hg}}(\Om\times\R^p\setminus 0)}$,
\item[(ii)] for all $K\Subset\Om$ the net $$\biggl(\inf_{y\in K,\xi\in\R^p\setminus 0}\biggl|\nabla_{y,\xi} \phi_\eps\biggl(y,\frac{\xi}{|\xi|}\biggr)\biggr|^2\biggr)_\eps$$ is strictly nonzero.
\end{itemize}
On $\MPhi(\Om\times\R^p)$ we introduce the equivalence relation $\sim$ as follows: $(\phi_\eps)_\eps\sim(\omega_\eps)_\eps$ if and only if $(\phi_\eps-\omega_\eps)\in\Neg_{S^1_{\rm{hg}}(\Om\times\R^p\setminus 0)}$. The elements of the factor space  $$\wt{\Phi}(\Om\times\R^p):={\mathcal{M}_\Phi(\Om\times\R^p)}/{\sim}.$$
will be called \emph{generalised phase functions}.

Finally, we say that $\phi\in\wt{\Phi}(\Om\times\R^p)$ is a \emph{slow scale generalised phase function} if it has a representative $(\phi_\eps)_\eps\in\mathcal{M}^\ssc_{S^1_{\rm{hg}}(\Om\times\R^p\setminus 0)}$ such that the net in $(ii)$ is slow scale strictly nonzero.
\end{definition}
In the sequel $\Om'$ is an open subset of $\R^{n'}$. We denote by $\Phi[\Om';\Om\times\R^p]$ the set of all nets $(\phi_\eps)_{\eps\in(0,1]}$ of continuous functions on $\Om'\times\Om\times\R^p$ which are smooth on $\Om'\times\Om\times\R^p\setminus\{0\}$ and such that $(\phi_\eps(x,\cdot,\cdot))_\eps\in\Phi[\Om\times\R^p]$ for all $x\in\Om'$.
\begin{definition}
\label{def_phase_x_moderate}
An element of $\mM_{\Phi}(\Om';\Om\times\R^p)$ is a net $(\phi_\eps)_\eps\in\Phi[\Om';\Om\times\R^p]$ satisfying the conditions:
\begin{itemize}
\item[(i)] $(\phi_\eps)_\eps\in\mM_{S^1_{\rm{hg}}(\Om'\times\Om\times\R^p\setminus 0)}$,
\item[(ii)] for all $K'\Subset\Om'$ and $K\Subset\Om$ the net
\beq
\label{net_FIO}
\biggl(\inf_{x\in K',y\in K,\xi\in\R^p\setminus 0}\biggl|\nabla_{y,\xi} \phi_\eps\biggl(x,y,\frac{\xi}{|\xi|}\biggr)\biggr|^2\biggr)_\eps
\eeq
is strictly nonzero.
\end{itemize}
On $\mM_{\Phi}(\Om';\Om\times\R^p)$ we introduce the equivalence relation $\sim$ as follows: $(\phi_\eps)_\eps\sim(\omega_\eps)_\eps$ if and only if $(\phi_\eps-\omega_\eps)_\eps\in\Neg_{S^1_{\rm{hg}}(\Om'\times\Om\times\R^p\setminus 0)}$. The elements of the factor space
\[
\wt{\Phi}(\Om';\Om\times\R^p):=\mM_{\Phi}(\Om';\Om\times\R^p) / \sim.
\]
are called \emph{generalised phase functions with respect to the variables in $\Om\times\R^p$}.  If $\phi\in\wt{\Phi}(\Om';\Om\times\R^p)$ has a representative $(\phi_\eps)_\eps\in\mM^\ssc_{S^1_{\rm{hg}}(\Om'\times\Om\times\R^p\setminus 0)}$ such that the net in \eqref{net_FIO} is slow scale strictly nonzero then it is called \emph{slow scale generalised phase function with respect to the variables in $\Om\times\R^p$}.
\end{definition}

{\bf Slow scale critical points.}
\begin{definition}
\label{def_C_phi_ssc}
Let $\phi\in\wt{\Phi}(\Om\times\R^p)$. We define $C^\ssc_{\phi}\subseteq\Om\times\R^p\setminus 0$ as the complement of the set of all $(x_0,\xi_0)\in\Om\times\R^p\setminus 0$ with the property that there exist a relatively compact open neighborhood $U(x_0)$ of $x_0$ and a conic open neighborhood $\Gamma(\xi_0)\subseteq\R^p\setminus 0$ of $\xi_0$ such that the generalised function $|\nabla_\xi\phi(\cdot,\cdot)|^2$ is slow scale-invertible on $U(x_0)\times\Gamma(\xi_0)$. We set $\pi_\Om(C^\ssc_{\phi})=S^\ssc_{\phi}$ and $R^\ssc_{\phi}=(S^\ssc_{\phi})^{{\rm{c}}}$.
\end{definition}
By construction $C^\ssc_{\phi}$ is a conic closed subset of $\Om\times\R^p\setminus 0$ and $R^\ssc_{\phi}\subseteq R_{\phi}\subseteq\Om$ is open. It is routine to check that the region $C^\ssc_\phi$ coincides with the classical one $\{(x,\xi)\in\Om\times\R^p\setminus 0:\, \nabla_\xi\phi(x,\xi)=0\}$ when $\phi$ is a standard phase function independent of $\eps$.

{\bf Generalised Fourier integral operators.}
Let $\phi\in\wt{\Phi}(\Om';\Om\times\R^p)$, $a\in\G_{S^m_{\rho,\delta}(\Om'\times\Om\times\R^p})$ and $u\in\Gc(\Om)$. The generalised oscillatory integral
\[
I_{\phi}(a)(u)(x)=\int_{\Om\times\R^p}\esp^{i\phi(x,y,\xi)}a(x,y,\xi)u(y)\, dy\,\dslash\xi
\]
defines a generalised function in $\G(\Om')$ and the map
\begin{equation}
\label{def_A_fourier}
A:\Gc(\Om)\to\G(\Om'):u\to I_{\phi}(a)(u)
\end{equation}
is continuous. The operator $A$ defined in \eqref{def_A_fourier} is called \emph{generalised Fourier integral operator} with amplitude $a$ and phase function $\phi\in\wt{\Phi}(\Om';\Om\times\R^p)$. From Theorem 4.6 in \cite{Garetto:ISAAC07} we have that when phase function and amplitude are both slow scale regular the corresponding generalised Fourier integral operator maps $\Gcinf(\Om)$ continuously into $\Ginf(\Om')$. If $a\in\G^\ssc_{S^{-\infty}(\Om'\times\Om\times\R^p)}$ then $A$ maps $\Gc(\Om)$ into $\Ginf(\Om')$. Pseudodifferential operators are special Fourier integral operators with $\phi(x,y,\xi)=(x-y)\xi$ in \eqref{def_A_fourier}.

{\bf Composition of a generalised Fourier integral operator with a generalised pseudodifferential operator.} We conclude this survey on generalised Fourier integral operators by studying the composition with a generalised pseudodifferential operator. First of all we focus on Fourier integral operators of the form
\[
F_\omega(b)(u)(x)=\int_{\R^n}\esp^{i\omega(x,\eta)}b(x,\eta)\widehat{u}(\eta)\, \dslash\eta,
\]
where $\omega\in\G_{S^1_{\rm{hg}}(\Om'\times\R^n\setminus 0)}$, $b\in\G_{S^m(\Om'\times\R^n)}$ and $u\in\Gc(\Om)$. Note that $\phi(x,y,\eta):=\omega(x,\eta)-y\eta$ is a well-defined generalised phase function belonging to $\wt{\Phi}(\Om';\Om\times\R^n)$ and $I_\phi(b)=F_\omega(b)$. Theorem 5.11 in \cite{Garetto:ISAAC07} provides the composition formula stated below.

{\bf Composition formula for $a(x,D)F_\omega(b)$.}
Let $\omega\in\G^{\ssc}_{S^1_{\rm{hg}}(\Om\times\R^n\setminus 0)}$ have a representative $(\omega_\eps)_\eps$ such that $\nabla_x\omega_\eps\neq 0$ for all $\eps\in(0,1]$ and for all $K\Subset\Om$
\[
\biggl(\inf_{x\in K,\eta\in\R^n\setminus 0}\biggl|\nabla_{x}\omega_\eps\big(x,\frac{\eta}{|\eta|}\big)\biggr|\biggr)_\eps
\]
is slow scale strictly non-zero. Let $a\in\G^\ssc_{S^m(\Om\times\R^n)}$ and $b\in\G^\ssc_{S^l(\Om\times\R^n\setminus 0)}$ with $\supp_x\, b\Subset\Om$. Then the operator $a(x,D)F_\omega(b)$ has the following properties:
\begin{itemize}
\item[(i)] it maps $\Gcinf(\Om)$ into $\Ginf(\Om)$;
\item[(ii)] it is of the form
\[
\int_{\R^n}\esp^{i\omega(x,\eta)}h(x,\eta)\widehat{u}(\eta)\, \dslash\eta +r(x,D)u,
\]
where $h\in\G^\ssc_{S^{l+m}(\Om\times\R^n\setminus 0)}$ has asymptotic expansion given by the symbols
\[
h_\alpha(x,\eta)=\frac{\partial^\alpha_\xi a(x,\nabla_x\omega(x,\eta))}{\alpha!}D^\alpha_z\big(\esp^{i\overline{\omega}(z,x,\eta)}b(z,\eta)\big)|_{z=x},\qquad \alpha\in\N^n,
\]
with $\overline{\omega}(z,x,\eta):=\omega(z,\eta)-\omega(x,\eta)-\lara{\nabla_x\omega(x,\eta),z-x}$, and $r$ is slow scale regular and of order $-\infty$.
\end{itemize}

\section{Transport equations with generalised coefficients}
\label{sec_PDE}
In this section, we are concerned with the Cauchy problem for the first order hyperbolic equation
\beq
\begin{split}
\label{first_order_equ}
 D_t u &=\sum_{j=1}^n a_{1,j}(t,x)D_j u+a_0(t,x)u \\
 u(0,\cdot)&=u_0,
\end{split}
\eeq
where $D_j=D_{x_j}$, the coefficients $a_{1,j}$ are real valued Colombeau generalised functions in $\G(\R^{n+1})$, $a_0\in\G(\R^{n+1})$ and $u_0\in\Gc(\R^n)$. The following theorem combines the well-posedness results of \cite{LO:91} with the more recent investigations of \cite{GH:03}.
\begin{theorem}
\label{theo_first_order_equ}
Let the coefficients $a_{1,j}$, $j=1,...,n$ and $a_0$ be Colombeau generalised functions in $\G(\R^{n+1})$ compactly supported in $x$. Assume that the coefficients $a_{1,j}$ are real valued and $\partial_k a_{1,j}$ as well as $a_0$ are of logarithmic type (k,j=1,...,n).
Then:
\begin{itemize}
\item[(i)] For each $u_0\in\G(\R^n)$ the Cauchy problem \eqref{first_order_equ} has a unique solution $u\in\G(\R^{n+1})$.
\item[(ii)] If in addition $u_0\in\Gc(\R^n)$ then the solution $u$ is compactly supported in $x$.
\item[(iii)] If in addition $a_0$ as well as $u_0$ are real valued then the solution $u$ is a real valued generalised function.
\item[(iv)] If the coefficients $a_{1,j}$ and $a_0$ are slow scale regular and $\partial_k a_{1,j}$ (k,j=1,...,n) as well as $a_0$ are of slow scale logarithmic type then for each $u_0\in\Ginf(\R^n)$ the unique solution $u\in\G(\R^{n+1})$ of the Cauchy problem \eqref{first_order_equ} belongs to $\Ginf(\R^{n+1})$.
\item[(v)] Under the hypotheses of $(iv)$ if the initial data $u_0$ are slow scale regular then $u$ is slow scale regular as well.
\end{itemize}
\end{theorem}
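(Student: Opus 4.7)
The plan is to solve the problem at the level of representatives by the classical smooth theory and to transfer the required asymptotic behaviour from data to solution via $L^2$ energy estimates and Gronwall's inequality, following the strategy of \cite{LO:91} in the moderate case and of \cite{GH:03} in the slow scale / $\Ginf$ versions. Fix representatives $(a_{1,j,\eps})_\eps$, $(a_{0,\eps})_\eps$, $(u_{0,\eps})_\eps$ of the data with $a_{1,j,\eps}$ real and all coefficients supported in a common compact $x$-set $K$ (uniformly in $\eps$, locally in $t$). For each $\eps$ the classical theory of smooth scalar first order hyperbolic equations with real principal part produces a unique global $u_\eps\in\Cinf(\R^{n+1})$. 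What remains is to obtain $\eps$-uniform asymptotic bounds and to check that negligible data yield a negligible solution.

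The core estimate is standard: multiplying the equation by $\overline{u_\eps}$, integrating over $\R^n$, taking real parts and integrating by parts --- valid by the reality of $a_{1,j,\eps}$ and the compact $x$-support --- gives
\[
\partial_t \|u_\eps(t,\cdot)\|_{L^2}^2 \le C_\eps(t)\,\|u_\eps(t,\cdot)\|_{L^2}^2,
\]
with $C_\eps(t)$ controlled by $\sum_j\|\partial_j a_{1,j,\eps}(t,\cdot)\|_{L^\infty}+\|a_{0,\eps}(t,\cdot)\|_{L^\infty}$. The logarithmic-type hypothesis gives $\int_0^T C_\eps(s)\,ds=O(\log\eps^{-1})$ on bounded intervals, so Gronwall yields $\|u_\eps(t,\cdot)\|_{L^2}^2=O(\eps^{-N})$ for some $N$. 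Derivatives $\partial^\alpha u_\eps$ satisfy equations with the same principal part and source terms consisting of the commutators $[\partial^\alpha,a_{1,j,\eps}\partial_j]u_\eps$ and terms in $a_{0,\eps}$; induction on $|\alpha|$ combined with Sobolev embedding on the fixed compact set $K\cup\supp u_{0,\eps}$ (see below) converts these into the $L^\infty$-bounds defining $\EM(\R^{n+1})$. Uniqueness modulo $\Neg$ follows by rerunning the argument with negligible data: $O(\eps^q)$ initial bounds produce $O(\eps^{q-N})$ bounds for $u_\eps$, and arbitrariness of $q$ gives $(u_\eps)_\eps\in\Neg(\R^{n+1})$. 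This proves (i).

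Part (ii) is then almost immediate: off $K$ the equation collapses to $\partial_t u_\eps=0$, whence $u_\eps(t,x)=u_{0,\eps}(x)$ for $x\notin K$, so $\supp_x u_\eps(t,\cdot)\subseteq K\cup\supp u_{0,\eps}$, a fixed compact set uniformly in $\eps$ on bounded time intervals, which also justifies the Sobolev embedding invoked above. For (iii), with real representatives of $a_{1,j}$, $a_0$, and $u_0$ (always available by taking real parts) the equation is real and the classical theory returns a real $u_\eps$, so the class $u$ is real.

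For (iv) and (v) the same argument is rerun under the sharpened hypothesis $\sup_{t\in[0,T]}C_\eps(t)=O(\log\mu_\eps)$ for a slow scale net $(\mu_\eps)_\eps$. The Gronwall factor then becomes $\mu_\eps^{T}$, itself slow scale, and slow-scale regularity of the coefficients keeps the commutator source terms under slow scale control at every step of the induction. The main technical obstacle is the uniformity requirement built into $\EMinf(\R^{n+1})$: a single moderation exponent $N$ must serve for all $|\alpha|$. The slow scale regularity of $a_{1,j}$ and $a_0$ is precisely what prevents the $\eps$-exponent from degrading with $|\alpha|$, so $N$ is inherited once and for all from $u_0\in\Ginf(\R^n)$, giving $u\in\Ginf(\R^{n+1})$ in (iv). Under the additional slow scale regularity of $u_0$ in (v), the induction propagates slow scale bounds from the initial data to every derivative of $u_\eps$, establishing slow scale regularity of $u$. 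Managing this uniform-in-$|\alpha|$ bookkeeping follows the template of \cite{GH:03}.
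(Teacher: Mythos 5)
The paper does not actually prove Theorem \ref{theo_first_order_equ}: it is quoted as ``combining the well-posedness results of [LO:91] with the more recent investigations of [GH:03]'', and your energy-estimate reconstruction (classical solvability for each $\eps$, $L^2$ estimate plus Gronwall, log-type hypotheses turning $e^{\int C_\eps}$ into a moderate factor, induction on $|\alpha|$ with the commutator bookkeeping, slow scale variants for (iv)--(v)) is precisely the method of those references. It is worth noting that Section 2.1--2.2 of the paper in effect contains a second, more explicit proof via characteristics, since it derives the closed formula $u(t,x)=e^{i\beta(t,x)}u_0(\gamma(x,t,0))$ with $\gamma$ the characteristic flow and $\beta$ an integral of $a_0$ along it; all five assertions can be read off directly from that formula.

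There is, however, one genuine gap in your write-up of parts (i) and (iv): these allow $u_0\in\G(\R^n)$, respectively $\Ginf(\R^n)$, \emph{without} compact support, whereas your argument runs a global $L^2$ energy estimate and invokes Sobolev embedding ``on the fixed compact set $K\cup\supp u_{0,\eps}$''. For such data $u_{0,\eps}$ need not lie in $L^2(\R^n)$ and that set need not be compact, so the estimate as stated is unavailable; as written your proof of (i) covers only $u_0\in\Gc(\R^n)$. The repair is routine but must be said. Pick $\chi\in\Cinfc(\R^n)$ equal to $1$ on a neighbourhood of the common $x$-support $K$ of the coefficients; then $(1-\chi(x))u_{0,\eps}(x)$, extended constantly in $t$, is an exact solution of \eqref{first_order_equ} (every term of the operator vanishes identically: either $x$ lies near $K$, where $(1-\chi)u_{0,\eps}$ and all its derivatives vanish, or $x\notin K$, where the coefficients vanish). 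By linearity it therefore suffices to solve with the compactly supported datum $\chi u_0$, to which your argument applies verbatim; for uniqueness, subtract the same exact solution from the difference of two candidate solutions and observe that what remains is compactly supported in $x$ (outside $K$ the equation forces $\partial_t w=0$, so $w(t,x)=w_0(x)$ there), after which the $L^2$ argument closes. The localisation needed for the $\Ginf$ and slow scale statements uses additionally that the characteristics never leave $K$ (the velocity field vanishes outside $K$, so each characteristic is either constant or confined to $K$), so derivative bounds on a compact $(t,x)$-set depend only on the data on a fixed compact set. Alternatively, one can bypass $L^2$ entirely and argue along characteristics, as the paper does in Proposition \ref{prop_eikonal_PDE} and Theorem \ref{theo_transport_first}.
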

The aim of this section is to prove that the solution $u$ of the Cauchy problem \eqref{first_order_equ} can be written as the action of a generalised Fourier integral operator $F_\phi(b)$ on the initial data $u_0\in\Gc(\R^n)$. This requires to determine the phase function $\phi$ and the symbol $b$.
\subsection{The generalised phase function and the characteristic curves}
\label{subsec_eikonal}
The generalised phase function $\phi$ is the solution of the eikonal equation determined by the principal part of the operator
\[
 D_t -\sum_{j=1}^n a_{1,j}(t,x)D_j-a_0(t,x)
\]
under the initial condition $\phi(0,x,\eta)=x\eta$. Thus one has to solve the linear Cauchy problem
\beq
\label{eikonal_0}
\begin{split}
\partial_t\phi(t,x,\eta)&=\sum_{j=1}^n a_{1,j}(t,x)\partial_j\phi(t,x,\eta),\\
\phi(0,x,\eta)&=x\eta.
\end{split}
\eeq
Under the hypotheses of Theorem \ref{theo_first_order_equ} on the coefficients $a_{1,j}$, we already know that there exists a unique solution $\phi\in\G(\R^{n+2})$. More precisely, it has the form
\[
\phi(t,x,\eta)=\sum_{h=1}^n \omega_h(t,x)\eta_h,
\]
where $\omega_h$, $h=1,...,n$, are solutions of the Cauchy problems
\beq
\label{eikonal_1}
\begin{split}
\partial_t\omega_h(t,x)&=\sum_{j=1}^n a_{j,1}(t,x)\partial_{j}\omega_h(t,x),\\
\omega_h(0,x)&=x_h.\\
\end{split}
\eeq
In the following proposition we describe the properties of the generalised functions $\omega_h$ more specifically. The solutions $\gamma_1,...,\gamma_n$ of the initial value problem
\beq
\label{charac_n}
\frac{d}{ds}\gamma_{h}(x,t,s)=-a_{1,h}(s,\gamma_{1}(x,t,s),\gamma_{2}(x,t,s),...,\gamma_{n}(x,t,s)),\qquad\quad \gamma_{h}(x,t,t)=x_j,\ h=1,...,n.
\eeq
are the components of the \emph{characteristic curve} $\gamma=(\gamma_1,...,\gamma_n)$ associated with the differential operator $\sum_{j=1}^n a_{1,j}(t,x)D_j$. Note that from Theorem 1.5.2 and Remark 1.5.3 in \cite{GKOS:01} this initial value problem is well-posed in $\G(\R^{n+2})$ when $a_{1,j}$ is compactly supported in $x$ and has first order $x$-derivatives of logarithmic type.

We make the following assumptions:
\begin{itemize}
\item[$(h1)$] the coefficients $a_{1,j}$ are real valued generalised functions in $\G(\R^{n+1})$, compactly supported with respect to $x$ with $\partial_k a_{1,j}$ of logarithmic type ($k,j=1,...,n$);
\item[$(h2)$] the coefficients $a_{1,j}$ are real valued slow scale regular generalised functions in $\G(\R^{n+1})$, compactly supported with respect to $x$ with $\partial_k a_{1,j}$ of slow scale logarithmic type (k,j=1,...,n).
\end{itemize}
The compact support property is introduced here to keep the presentation simple and could be relaxed.
\begin{proposition}
\label{prop_eikonal_PDE}
\leavevmode
\begin{itemize}
\item[(i)] Under the hypothesis $(h1)$ there exists a unique real valued solution $\omega_h(t,x)\in\G(\R^{n+1})$ of the Cauchy problem \eqref{eikonal_1}; $\omega_h(t,x)$ is the $h$-th component of the characteristic curve $\gamma(x,t,0)$.
\item[(ii)] Under the hypothesis $(h2)$ the solution $\omega_h$ is slow scale regular.
\end{itemize}
\end{proposition}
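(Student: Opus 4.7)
The plan is to construct $\omega_h$ explicitly via the method of characteristics at the level of representatives and then invoke Theorem \ref{theo_first_order_equ}(i) for uniqueness. Under hypothesis $(h1)$, the right-hand side of the ODE system \eqref{charac_n} is compactly supported in the spatial variables with first spatial derivatives of logarithmic type, so Theorem 1.5.2 and Remark 1.5.3 of \cite{GKOS:01} apply and yield a unique real valued characteristic flow $\ga = (\ga_1,\ldots,\ga_n) \in \G(\R^{n+2})^n$ whose representatives $(\ga_{h,\eps})_\eps$ are classical solutions of \eqref{charac_n} for each $\eps$.

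Define $\omega_h(t,x) := \ga_h(x,t,0) \in \G(\R^{n+1})$ by restriction of $\ga_h$ to the hyperplane $\{s=0\}$. To verify that this $\omega_h$ satisfies \eqref{eikonal_1}, I would work at the level of smooth representatives and exploit the flow property
\[
\ga_\eps(x,t+\tau,0) = \ga_\eps\big(\ga_\eps(x,t+\tau,t),\, t,\, 0\big)
\]
together with the componentwise Taylor expansion $\ga_{h,\eps}(x,t+\tau,t) = x_h + \tau\, a_{1,h,\eps}(t,x) + O(\tau^2)$, obtained by integrating \eqref{charac_n} from $s = t+\tau$ back to $s = t$. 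Differentiating in $\tau$ at $\tau = 0$ and applying the chain rule gives, for each $\eps$,
\[
\d_t \ga_{h,\eps}(x,t,0) = \sum_{j=1}^n a_{1,j,\eps}(t,x)\, \d_{x_j} \ga_{h,\eps}(x,t,0),
\]
which lifts to the required identity in $\G(\R^{n+1})$; the initial condition $\omega_h(0,x)=x_h$ follows from $\ga_h(x,t,t)=x_h$ evaluated at $t=0$. Uniqueness is then obtained from Theorem \ref{theo_first_order_equ}(i) applied to \eqref{eikonal_1} with $a_0 = 0$ and initial datum $x_h \in \G(\R^n)$, and the real-valuedness of $\omega_h$ is inherited from that of $\ga_h$, which in turn follows from the real-valuedness of the $a_{1,j}$.

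For part $(ii)$, the argument is identical but uses the slow scale refinement of the Colombeau ODE theory: under $(h2)$, the slow scale logarithmic bounds on $\d_k a_{1,j}$ enter the Gronwall-type estimates for \eqref{charac_n} and yield slow scale moderate bounds for all derivatives of the net $(\ga_{h,\eps})_\eps$, which then transfer to $\omega_h$ upon restriction to $s=0$. I expect the main technical obstacle to be the verification in paragraph two that the chain rule and the $O(\tau^2)$ remainder behave compatibly with the $\eps$-dependence in a moderate (respectively, slow scale moderate) fashion; this comes down to showing that the Jacobian $\d_x \ga_\eps(x,t,0)$ admits such bounds, which is precisely what the (slow scale) logarithmic hypotheses on $\d_k a_{1,j}$ are designed to guarantee through Gronwall's inequality.
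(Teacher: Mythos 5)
Your proof is correct, but it runs the key identification in the opposite direction from the paper and handles part (ii) differently. The paper first obtains existence, uniqueness and real-valuedness of $\omega_h$ by applying Theorem \ref{theo_first_order_equ}(i),(iii) directly to \eqref{eikonal_1} (a transport equation with $a_0=0$ and datum $x_h$), gets slow scale regularity in case (ii) from Theorem \ref{theo_first_order_equ}(iv),(v) (the datum $x_h$ being smooth, hence slow scale regular), and only afterwards identifies $\omega_h(t,x)=\gamma_h(x,t,0)$ by the one-line observation that $\frac{d}{ds}\omega_{h,\eps}(s,\gamma_\eps(x,t,s))=0$, so that evaluating at $s=t$ and $s=0$ and using $\omega_{h,\eps}(0,y)=y_h$ gives the claim. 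You instead build the candidate $\gamma_h(x,t,0)$ first, verify the PDE via the two-parameter flow property and a Taylor expansion in $\tau$, and invoke Theorem \ref{theo_first_order_equ}(i) only for uniqueness; real-valuedness you read off from the flow. Both identifications work, and for each fixed $\eps$ your computation is a purely classical identity between smooth functions, so your worry about the $\eps$-uniformity of the $O(\tau^2)$ remainder is moot: uniformity enters only through the moderateness of $(\gamma_{h,\eps})_\eps$, which is already supplied by the cited ODE result in \cite{GKOS:01}. The paper's identification is slightly leaner, needing only the chain rule along a single characteristic rather than the semigroup property of the flow. The one place where your route costs genuine extra work is part (ii): you need a slow scale refinement of Theorem 1.5.2 of \cite{GKOS:01} (slow scale regular coefficients with slow scale log-type derivatives produce a slow scale regular flow), which is not among the results quoted in the paper and would have to be established by redoing the Gronwall estimates; the paper sidesteps this entirely by applying Theorem \ref{theo_first_order_equ}(iv),(v) to the PDE with the slow scale regular datum $x_h$.
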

\begin{proof}
From Theorem \ref{theo_first_order_equ} $(i)$ and $(iii)$ it is clear that there exists a unique real valued Colombeau solution $\omega_h\in\G(\R^{n+1})$. Since the initial data $x_h$ are smooth and therefore slow scale regular we have from Theorem \ref{theo_first_order_equ} $(iv)$ and $(v)$ that $\omega_h$ is slow scale regular under the set of assumptions $(h2)$ on $a_{1,j}$ and $\partial_k a_{1,j}$, $k,j=1,...,n$. It remains to prove that $\omega_h(t,x)=\gamma_h(x,t,0)$. This comes from the fact that $\omega_h$ is constant along the characteristic curves $\gamma(x,t,s)$, i.e., working at the level of representatives,
\[
\frac{d}{ds}\omega_{h,\eps}(s,\gamma_{1,\eps}(x,t,s),\gamma_{2,\eps}(x,t,s),...,\gamma_{n,\eps}(x,t,s))=0.
\]
Hence,
\[
\omega_{h,\eps}(t,\gamma_{1,\eps}(x,t,t),...,\gamma_{n,\eps}(x,t,t))=\omega_{h,\eps}(0,\gamma_{1,\eps}(x,t,0),...,\gamma_{n,\eps}(x,t,0))
\]
for each $t\in\R$. This implies that $\omega_h(t,x)=\gamma_h(x,t,0)$ in $\G(\R^{n+1})$.
\end{proof}
Summarising we can state the following proposition.
\begin{proposition}
\label{prop_phase_PDE}
If $(h1)$ holds then the generalised phase function
\[
\phi(t,x,\eta)=\sum_{h=1}^n\omega_h(t,x)\eta_h=\sum_{h=1}^n\gamma_h(x,t,0)\eta_h
\]
solves the eikonal Cauchy problem \eqref{eikonal_0}.

If $(h2)$ holds then $\phi$ is a slow scale generalised phase function.
\end{proposition}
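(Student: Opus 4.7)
The plan is to verify two things: (a) at the representative level, the net $\phi_\eps(t,x,\eta)=\sum_{h=1}^n \omega_{h,\eps}(t,x)\eta_h$ solves \eqref{eikonal_0}, and (b) $\phi$ defines an element of $\wt{\Phi}(\R_t;\R^n_x\times\R^n_\eta)$ in the sense of Definition \ref{def_phase_x_moderate}, slow scale under $(h2)$. Part (a) is immediate by linearity in $\eta$:
\[
\partial_t\phi_\eps-\sum_{j=1}^n a_{1,j,\eps}\partial_{x_j}\phi_\eps=\sum_{h=1}^n\Bigl(\partial_t\omega_{h,\eps}-\sum_{j=1}^n a_{1,j,\eps}\partial_j \omega_{h,\eps}\Bigr)\eta_h=0
\]
by \eqref{eikonal_1} and Proposition \ref{prop_eikonal_PDE}, while $\phi_\eps(0,x,\eta)=\sum_h x_h\eta_h=x\eta$. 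The equality $\omega_h=\gamma_h(\cdot,t,0)$ has already been established in Proposition \ref{prop_eikonal_PDE}.

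For (b), homogeneity of degree one in $\eta$ is evident from the linear structure in $\eta$. The moderate (resp.\ slow scale) estimates required to place $(\phi_\eps)_\eps$ in $\mM_{S^1_{\rm{hg}}(\R\times\R^n\times\R^n\setminus 0)}$ (resp.\ $\mM^\ssc_{S^1_{\rm{hg}}}$) reduce to moderate (resp.\ slow scale) bounds on $\partial^\beta_x\omega_{h,\eps}$ on compact $(t,x)$-sets for every multi-index $\beta$. These are obtained by differentiating the transport equation \eqref{eikonal_1} in $x$, which produces linear first-order systems for the higher $x$-derivatives of $\omega_{h,\eps}$ whose coefficients involve $a_{1,j,\eps}$ and $\partial^\alpha_x a_{1,j,\eps}$; Theorem \ref{theo_first_order_equ}(iv)--(v) (together with hypothesis $(h1)$ or $(h2)$) then delivers the required estimates. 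This accounts for condition (i) of Definition \ref{def_phase_x_moderate}.

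The main obstacle is condition (ii) of Definition \ref{def_phase_x_moderate}: strict nonvanishing (resp.\ slow scale strict nonvanishing) of $\bigl(\inf_{(t,y)\in K'\times K,\xi\ne 0}|\nabla_{y,\xi}\phi_\eps(t,y,\xi/|\xi|)|^2\bigr)_\eps$. Direct computation gives $\nabla_\eta\phi_\eps(t,y,\xi)=\omega_\eps(t,y)$ and $\nabla_y\phi_\eps(t,y,\xi)=(\nabla_y\omega_\eps(t,y))^{T}\xi$, so
\[
\bigl|\nabla_{y,\xi}\phi_\eps(t,y,\xi/|\xi|)\bigr|^2=|\omega_\eps(t,y)|^2+\frac{|(\nabla_y\omega_\eps(t,y))^T\xi|^2}{|\xi|^2}\ge\frac{J_\eps(t,y)^2}{\|\nabla_y\omega_\eps(t,y)\|^{2(n-1)}},
\]
where $J_\eps(t,y)=\det\nabla_y\omega_\eps(t,y)$ and the last inequality is the standard bound on the smallest eigenvalue of a positive definite matrix. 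Since for each fixed $\eps$ the map $y\mapsto\omega_\eps(t,y)=\gamma_\eps(y,t,0)$ is the classical ODE flow of the vector field $-a_{1,\eps}(s,\cdot)$, $J_\eps(t,y)$ is given by the Liouville identity
\[
J_\eps(t,y)=\exp\Bigl(\int_0^t\sum_{j=1}^n\partial_{x_j}a_{1,j,\eps}\bigl(s,\gamma_\eps(y,t,s)\bigr)\,ds\Bigr).
\]
Under $(h1)$, the logarithmic bound on $\partial_k a_{1,j,\eps}$ gives $J_\eps^{\pm 1}=O(\eps^{-N})$ uniformly on compacts in $(t,y)$, and together with the moderate bound on $\nabla_y\omega_\eps$ from the previous paragraph this yields the desired $\eps^{N'}$-lower bound. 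Replacing logarithmic by slow scale logarithmic and $O(\eps^{-N})$ by $O(\mu_\eps)$ with $(\mu_\eps)_\eps$ a slow scale net throughout the same argument delivers the slow scale strict nonvanishing statement under $(h2)$, completing the proof.
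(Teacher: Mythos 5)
Your proof is correct, and the eikonal part (linearity in $\eta$ plus Proposition \ref{prop_eikonal_PDE}) is exactly the paper's route; but for the phase-function property you take a genuinely different and much more laborious path than the paper intends. As Remark \ref{rem_phase} makes explicit, calling $\phi(t,x,\eta)=\sum_h\omega_h(t,x)\eta_h$ a ``generalised phase function'' is an abuse of language: the phase actually entering the operator $F_\phi(b)$ is $\phi'(t,x,y,\eta)=\phi(t,x,\eta)-y\eta$, regarded as a phase in the variables $(y,\eta)$, and for it the non-degeneracy condition (ii) of Definition \ref{def_phase_x_moderate} is trivial, since the $\nabla_y$-component of the gradient is $-\eta$ and hence $|\nabla_{y,\eta}\phi'_\eps(t,x,y,\eta/|\eta|)|\ge 1$ uniformly in $\eps$. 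Consequently the only non-trivial content of the proposition is the moderateness (resp.\ slow scale moderateness) of the representative, which is exactly Proposition \ref{prop_eikonal_PDE} (i) (resp.\ (ii)); the paper therefore offers no further proof beyond ``summarising''. What you prove instead is the stronger statement that $\phi$ itself is non-degenerate as a phase in $(x,\eta)$: you bound $|\nabla_{x,\eta}\phi_\eps|^2$ from below by the smallest singular value of the Jacobian $\nabla_x\omega_\eps$, controlled through the Liouville identity $\det\nabla_x\gamma_\eps(x,t,0)=\exp\bigl(\int_0^t\sum_j\partial_{x_j}a_{1,j,\eps}(s,\gamma_\eps(x,t,s))\,ds\bigr)$ and the (slow scale) logarithmic bounds on $\partial_k a_{1,j,\eps}$. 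This computation is correct, but it is not needed for the proposition; its real payoff appears later in the paper, where it shows that hypotheses (i) and (ii) of Theorem \ref{theo_claudia_flow} (non-vanishing of $\nabla_x\phi_\eps$ with slow scale lower bound, and invertibility of the Jacobian $(\partial_j\gamma_{i,\eps})$) are automatic consequences of $(h1)$/$(h2)$ rather than additional assumptions. One small inaccuracy: the moderate bounds on $\partial_x^\beta\omega_{h,\eps}$ under $(h1)$ already follow from $\omega_h\in\G(\R^{n+1})$, i.e.\ from Theorem \ref{theo_first_order_equ} (i); parts (iv)--(v) are only needed for the slow scale case $(h2)$.
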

\begin{remark}
\label{rem_phase}
The Colombeau generalised function $\phi$ is actually a generalised symbol homogeneous of order 1 in $\eta$. With a certain abuse of language we employ the expression \emph{generalised phase function}, previously referred to $\phi'(t,x,y,\eta)=\phi(t,x,\eta)-y\eta$. Indeed, for $\phi'$ one has the typical invertibility condition on the gradient, i.e.,
\[
|\nabla\phi'(t,x,y,\eta)|=|(\nabla_t\phi(t,x,\eta),\nabla_x\phi(t,x,\eta),-\eta,\nabla_\eta\phi(t,x,\eta)-y)|\ge 1
\]
for all $\eta$ with $|\eta|=1$.
\end{remark}

\subsection{The transport equation for the generalised symbol}
\label{subsec_transport}
To compute the generalised symbol $b$ we need to solve the Cauchy problem \eqref{first_order_equ} with initial condition $1$ at $t=0$. In detail,
\beq
\begin{split}
\label{first_transport}
 D_t b &=\sum_{j=1}^n a_{1,j}(t,x)D_j b+a_0(t,x)b \\
 b(0,\cdot)&=1.
\end{split}
\eeq
We introduce the following set of hypotheses on $a_0$:
\begin{itemize}
\item[$(i1)$] $a_0$ is a generalised function in $\G(\R^{n+1})$, compactly supported in $x$ and of logarithmic type;
\item[$(i2)$] $a_0$ is a slow scale regular generalised function in $\G(\R^{n+1})$, compactly supported in $x$ with $0$-derivative of slow scale logarithmic type.
\end{itemize}
Using Theorem \ref{theo_first_order_equ} and integrating along the characteristics we have the following existence and uniqueness result.
\begin{theorem}
\label{theo_transport_first}
\leavevmode
\begin{itemize}
\item[(i)] Under the hypothesis $(h1)$ on the coefficients $a_{1,j}$ and the hypothesis $(i1)$ on $a_0$ there exists a unique solution $b\in \G(\R^{n+1})$ of the Cauchy problem \eqref{first_transport}.
\item[(ii)] Under the hypothesis $(h2)$ on the coefficients $a_{1,j}$ and the hypothesis $(i2)$ on $a_0$, the solution $b\in \G(\R^{n+1})$ of the Cauchy problem \eqref{first_transport} is slow scale regular.
\item[(iii)] Further, $b(t,x)=\esp^{i\beta(t,x)}$ with
\[
\beta(t,x)=\int_{0}^t a_0(s,\gamma_1(x,t,s),...,\gamma_n(x,t,s))\, ds.
\]
\end{itemize}
\end{theorem}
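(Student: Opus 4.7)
Parts (i) and (ii) will be deduced directly from Theorem~\ref{theo_first_order_equ} applied with initial datum $u_0=1$. Part (iii) is the substantive part, and I will prove it by verifying that $\esp^{i\beta}$ solves \eqref{first_transport} at the representative level and then invoking the uniqueness from (i).

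For (i), the Cauchy problem \eqref{first_transport} has exactly the form of \eqref{first_order_equ} with $u_0=1\in\Ginf(\R^n)$; under $(h1)$ and $(i1)$, Theorem~\ref{theo_first_order_equ}(i) applies and produces a unique $b\in\G(\R^{n+1})$. For (ii), the constant net $(1)_\eps$ is trivially slow scale regular, so under the stronger hypotheses $(h2)$ and $(i2)$ Theorem~\ref{theo_first_order_equ}(v) yields that $b$ is slow scale regular.

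For (iii), the starting point is that $\gamma\in\G(\R^{n+2})^n$ is well-defined by Theorem~1.5.2 and Remark~1.5.3 of \cite{GKOS:01}, and the composition $a_0(s,\gamma(x,t,s))$ makes sense in $\G$ thanks to the compact $x$-support of $a_0$. Integrating in $s$ over $[0,t]$ yields $\beta\in\G(\R^{n+1})$ with $\beta(0,\cdot)=0$, so $b:=\esp^{i\beta}\in\G(\R^{n+1})$ satisfies the initial condition $b(0,\cdot)=1$. To verify the equation, I would work with representatives: since $b_\eps=\esp^{i\beta_\eps}$ is nowhere zero, dividing by $b_\eps$ reduces $(D_t-\sum_ja_{1,j,\eps}D_j-a_{0,\eps})b_\eps=0$ to the classical transport identity
\[
\partial_t\beta_\eps(t,x) - \sum_{j=1}^n a_{1,j,\eps}(t,x)\,\partial_j\beta_\eps(t,x) = a_{0,\eps}(t,x).
\]

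The key ingredient is the flow identity
\[
\partial_t\gamma_{j,\eps}(x,t,s) = \sum_{k=1}^n a_{1,k,\eps}(t,x)\,\partial_{x_k}\gamma_{j,\eps}(x,t,s),
\]
which holds pointwise for every $\eps$. This is obtained by differentiating the flow property $\gamma_\eps(\gamma_\eps(x,t,r),r,s)=\gamma_\eps(x,t,s)$ at $r=t$ and using \eqref{charac_n} together with $\gamma_\eps(x,t,t)=x$. Given this identity, differentiating $\beta_\eps(t,x)=\int_0^t a_{0,\eps}(s,\gamma_\eps(x,t,s))\,ds$ under the integral sign (with the boundary term at $s=t$ producing the free term $a_{0,\eps}(t,x)$) and substituting the flow identity into the resulting chain-rule expression yields the transport equation for $\beta_\eps$ as an \emph{exact} classical identity, not merely up to a negligible net. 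Consequently $b_\eps$ satisfies \eqref{first_transport} exactly at the representative level, and by the uniqueness in (i) it is the Colombeau solution.

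The main obstacle is the flow identity for $\partial_t\gamma_{j,\eps}$; it is a classical fact about ODE flows, but one must ensure it transfers to the $\eps$-net of characteristic curves, which it does since $\gamma_\eps$ is smooth for each $\eps$. Moderateness of the compositions and of $\beta_\eps$ is routine under $(h1), (i1)$, while the corresponding slow scale bounds of (ii) follow in the same way from $(h2), (i2)$ and the slow scale stability of integration over $[0,t]$ on compact sets.
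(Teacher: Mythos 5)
Your proof is correct and matches the paper's (essentially unwritten) argument: the paper disposes of this theorem with the single remark that it follows from Theorem~\ref{theo_first_order_equ} by integrating along the characteristics, which is exactly what you carry out --- (i) and (ii) via Theorem~\ref{theo_first_order_equ} with $u_0=1$, and (iii) via the method of characteristics together with uniqueness. The only cosmetic difference is that the paper's hint suggests \emph{deriving} the formula by solving the ODE $\frac{d}{ds}\,b_\eps(s,\gamma_\eps(x,t,s))=i\,a_{0,\eps}(s,\gamma_\eps(x,t,s))\,b_\eps(s,\gamma_\eps(x,t,s))$ along the characteristic through $(t,x)$, whereas you \emph{verify} the resulting expression $\esp^{i\beta}$ directly using the flow identity for $\partial_t\gamma_{j,\eps}$; both computations are exact at the representative level (with the log-type hypothesis on $a_0$ guaranteeing moderateness of $\esp^{i\beta_\eps}$) and conclude by uniqueness.
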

\subsection{Generalised FIO formula}
\label{subsec_FIO_formula}
A combination of Proposition \ref{prop_phase_PDE} with Theorem \ref{theo_transport_first} yields the following FIO formula.
\begin{proposition}
\label{prop_FIO_formula}
\leavevmode
\begin{itemize}
\item[(i)] Under the hypotheses $(h1)$ and $(i1)$ on the coefficients $a_{1,j}$ and $a_0$, respectively, the solution $u\in\G(\R^{n+1})$ of the Cauchy problem \eqref{first_order_equ} can be written as
\beq
\label{FIO_formula}
u(t,x)=F_\phi(b)(u_0)(t,x):=\int_{\R^n}\esp^{i\phi(t,x,\eta)}b(t,x)\widehat{u_0}(\eta)\, \dslash\eta,
\eeq
where the generalised phase function $\phi$ is defined by Proposition \ref{prop_phase_PDE} and $b\in \G(\R^{n+1})$ by Theorem \ref{theo_transport_first}.
\item[(ii)] Under the hypotheses $(h2)$ and $(i2)$ the formula \eqref{FIO_formula} holds with $\phi$ and $b$ slow scale regular.
\end{itemize}
\end{proposition}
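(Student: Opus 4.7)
\textbf{Proof proposal for Proposition \ref{prop_FIO_formula}.}

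The plan is to define $v:=F_\phi(b)(u_0)$ directly via the oscillatory integral and show that $v$ solves the Cauchy problem \eqref{first_order_equ}; uniqueness from Theorem \ref{theo_first_order_equ}(i) then forces $u=v$ in $\G(\R^{n+1})$. Since $u_0\in\Gc(\R^n)$, any representative $u_{0,\eps}$ has common compact support, so $\widehat{u_{0,\eps}}$ is a Schwartz function for each fixed $\eps$ with bounds that are moderate in $\eps$, and hence the defining integral for $v_\eps$ converges in the classical sense and defines a net $(v_\eps)_\eps\in\E_M(\R^{n+1})$ (alternatively, one invokes the general well-definedness of generalised FIOs as recalled in Section~1.2).

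The main computation is at the level of representatives. Since $b$ does not depend on $\eta$ and $\partial_t e^{i\phi_\eps}=i(\partial_t\phi_\eps)e^{i\phi_\eps}$, a direct differentiation under the integral sign yields
\begin{equation*}
D_t v_\eps-\sum_{j=1}^n a_{1,j,\eps}D_j v_\eps-a_{0,\eps}v_\eps
=\int_{\R^n}e^{i\phi_\eps(t,x,\eta)}\Bigl[\bigl(\partial_t\phi_\eps-\textstyle\sum_j a_{1,j,\eps}\partial_j\phi_\eps\bigr)b_\eps+\bigl(D_t b_\eps-\sum_j a_{1,j,\eps}D_j b_\eps-a_{0,\eps}b_\eps\bigr)\Bigr]\widehat{u_{0,\eps}}(\eta)\,\dslash\eta.
\end{equation*}
By Proposition \ref{prop_phase_PDE}, one can choose representatives so that $n_\eps(t,x,\eta):=\partial_t\phi_\eps-\sum_j a_{1,j,\eps}\partial_j\phi_\eps$ is negligible as a net in $S^1_{\rm{hg}}$; by Theorem \ref{theo_transport_first}, $m_\eps(t,x):=D_t b_\eps-\sum_j a_{1,j,\eps}D_j b_\eps-a_{0,\eps}b_\eps$ is negligible in $\Cinf(\R^{n+1})$. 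The rapid decrease of $\widehat{u_{0,\eps}}$ (with bounds moderate in $\eps$, coming from the moderateness of the $C^k$-seminorms of $u_{0,\eps}$ on its common compact support) then makes the integral of $n_\eps b_\eps\widehat{u_{0,\eps}}$ absolutely convergent with negligible bounds, and similarly for $m_\eps\widehat{u_{0,\eps}}$; hence $v$ satisfies the differential equation in \eqref{first_order_equ}. For the initial condition, representatives of $\phi$ and $b$ can be chosen so that $\phi_\eps(0,x,\eta)=x\eta$ and $b_\eps(0,x)=1$ exactly (this is how they are constructed in Section~2.1--2.2), and Fourier inversion gives $v_\eps(0,x)=u_{0,\eps}(x)$. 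Thus $v$ and $u$ solve the same Cauchy problem, so $v=u$ in $\G(\R^{n+1})$, which is assertion (i). Part (ii) is then an immediate consequence of Proposition \ref{prop_phase_PDE}(ii) and Theorem \ref{theo_transport_first}(ii), which provide slow scale regular representatives of $\phi$ and $b$ under the stronger hypotheses $(h2)$ and $(i2)$.

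The main technical point is justifying that the residual oscillatory integral above, with a negligible symbol of order $1$ (times a moderate factor $b_\eps$) against a Schwartz factor $\widehat{u_{0,\eps}}$, produces a negligible net in $\G(\R^{n+1})$. Because $u_0$ is compactly supported one avoids the usual need for an oscillatory-integral regularisation: absolute convergence holds from the Schwartz decay of $\widehat{u_{0,\eps}}$, and the $\eps$-negligible symbol seminorms of $n_\eps b_\eps$ transfer directly to $\eps$-negligible $L^\infty_{\rm loc}$ bounds on the integral in $(t,x)$, with the same argument handled for all derivatives $\partial^\alpha_{t,x}$ by differentiating under the integral and using that products of moderate and negligible symbols remain negligible. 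This is the only step requiring more than a mechanical verification; everything else reduces to the already-established eikonal, transport, and uniqueness results.
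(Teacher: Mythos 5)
Your proposal is correct, but it takes a somewhat different route from the one the paper intends. The paper deduces the proposition by combining Proposition \ref{prop_phase_PDE} with Theorem \ref{theo_transport_first} in the following concrete way: since $\phi(t,x,\eta)=\sum_h\gamma_h(x,t,0)\eta_h$ is \emph{linear} in $\eta$ and $b$ is independent of $\eta$, Fourier inversion collapses the oscillatory integral to the explicit expression $F_\phi(b)(u_0)(t,x)=b(t,x)\,u_0(\gamma(x,t,0))$ (this identity is stated at the start of Section~2.4), and the chain rule together with the characteristic ODEs \eqref{charac_n} and the transport equation \eqref{first_transport} for $b$ shows at the level of representatives that this product solves \eqref{first_order_equ} up to negligible error; uniqueness from Theorem \ref{theo_first_order_equ}(i) then identifies it with $u$. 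You instead keep the oscillatory integral intact, differentiate under the integral sign, and use the eikonal and transport equations abstractly, which forces you to control the residual integral $\int e^{i\phi_\eps}\bigl[n_\eps b_\eps+m_\eps\bigr]\widehat{u_{0,\eps}}\,\dslash\eta$ with a negligible order-one symbol factor against the moderately-bounded Schwartz decay of $\widehat{u_{0,\eps}}$. That estimate is fine as you argue it (a fixed moderate loss $\eps^{-M}$ absorbed by arbitrary negligible gains $\eps^{q}$, for the integral and all its $(t,x)$-derivatives), and your treatment of the initial condition via exact representatives $\phi_\eps(0,x,\eta)=x\eta$, $b_\eps(0,x)=1$ and Fourier inversion is consistent with how these objects are constructed in Sections~2.1--2.2. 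What your approach buys is robustness: it does not use linearity of the phase in $\eta$ and is essentially the computation needed for the pseudodifferential parametrix of Section~3, whereas the paper's route is shorter here precisely because in this PDE setting the FIO degenerates to composition with the characteristic flow. Both arguments rest on the same three pillars (eikonal equation, transport equation, uniqueness), so I would count your proof as a valid, slightly more general variant rather than a gap.
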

\subsection{The non-homogeneous Cauchy problem}
We conclude this section by finding a solution formula for the non-homogeneous Cauchy problem
\beq
\begin{split}
\label{first_order_equ_nh}
 D_t u &=\sum_{j=1}^n a_{1,j}(t,x)D_j u+a_0(t,x)u+f(t,x) \\
 u(0,\cdot)&=u_0,
\end{split}
\eeq
where $f\in\G(\R^{n+1})$ is compactly supported with respect to $x$. Note that the Fourier integral operator $F_\phi(b)$ of Proposition \ref{prop_FIO_formula} is given by
\[
F_\phi(b)(u_0)(t,x)=b(t,x)u_0(\gamma(x,t,0)).
\]
It defines, for each $t\in\R$, a map
\[
U(t)=F_\phi(b)(t):\Gc(\R^n)\to \Gc(\R^n):u_0\to F_\phi(b)(u_0)(t,\cdot)
\]
such that $U(0)=I$ and
\[
U(t)^{-1}=\Gc(\R^n)\to\Gc(\R^n):v\to\frac{1}{b(t,\gamma(x,0,t))}v(\gamma(x,0,t)).
\]
\begin{theorem}
\label{theo_FIO_formula_nh}
\leavevmode
\begin{itemize}
\item[(i)] Under the hypotheses $(h1)$ and $(i1)$ on the coefficients $a_{1,j}$ and $a_0$, respectively, the solution $u\in\G(\R^{n+1})$ of the Cauchy problem \eqref{first_order_equ_nh} can be written as
\beq
\label{FIO_formula_nh}
u(t,x)=F_\phi(b)(t)\biggl(u_0+i\int_0^t\frac{1}{b(\tau,\gamma(\cdot,0,\tau))}f(\tau,\gamma(\cdot,0,\tau))d\tau\biggr)(x)
\eeq
where the generalised phase function $\phi$ is defined by Proposition \ref{prop_phase_PDE} and $b\in \G(\R^{n+1})$ by Theorem \ref{theo_transport_first}.
\item[(ii)] Under the hypotheses $(h2)$ and $(i2)$ the formula \eqref{FIO_formula_nh} holds with $\phi$ and $b$ slow scale regular.
\end{itemize}
\end{theorem}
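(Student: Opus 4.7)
The plan is to apply Duhamel's principle to reduce the non-homogeneous problem \eqref{first_order_equ_nh} to the homogeneous one solved by Proposition~\ref{prop_FIO_formula}. Writing $L := D_t - \sum_{j=1}^n a_{1,j}(t,x)D_j - a_0(t,x)$, I would seek $u$ in the form $u(t) = U(t) v(t,\cdot)$, where $U(t) := F_\phi(b)(t)$ is the homogeneous solution operator and $v$ is to be determined by imposing $Lu = f$ and $u(0) = u_0$. Both the closed form $U(t)u_0(x) = b(t,x)u_0(\gamma(x,t,0))$ and the inverse $U(t)^{-1}v(x) = v(\gamma(x,0,t))/b(t,\gamma(x,0,t))$ are already recorded in the excerpt right before the theorem; the first auxiliary task is then to check that $1/b$ is a well-defined element of $\G(\R^{n+1})$ (respectively of the slow scale regular subalgebra).

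This is precisely where the hypotheses (i1) and (i2) on $a_0$ come in. By Theorem~\ref{theo_transport_first} (iii) one has $b = \esp^{i\beta}$ with $\beta(t,x) = \int_0^t a_0(s,\gamma(x,t,s))\, ds$. Under (i1), $\beta$ inherits logarithmic type, hence at the representative level $|b_\eps| = \esp^{-\Im \beta_\eps} \geq \eps^{C}$ uniformly on compact sets, so that $b$ is strictly nonzero and invertible in $\G$. Under (i2) the same estimate gives $|b_\eps| \geq 1/\mu_\eps$ for a slow scale net $(\mu_\eps)_\eps$, so $1/b$ is slow scale regular. Moreover, compact $x$-support of the coefficients $a_{1,j}$ (hypothesis (h1)) ensures that $\gamma(\cdot,0,\tau) - \mathrm{id}$ is compactly supported, so $f(\tau,\gamma(\cdot,0,\tau))/b(\tau,\gamma(\cdot,0,\tau))$ belongs to $\Gc(\R^n)$ for each $\tau$ (slow scale regular under (h2)–(i2)).

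Next I would carry out the variation-of-parameters calculation at the level of smooth representatives. Since $A(\tau)u_0 := U(\tau)u_0$ solves the homogeneous equation for every fixed $u_0$, a direct chain rule argument yields, for $t$-dependent $v$,
\[
L\bigl(U(t)v(t,\cdot)\bigr)(x) = U(t)\bigl(D_t v(t,\cdot)\bigr)(x).
\]
Imposing $Lu = f$ therefore reduces to $D_t v(t,\cdot) = U(t)^{-1}f(t,\cdot)$, i.e.\ $\partial_t v = i\, U(t)^{-1}f(t,\cdot)$, and integrating with $v(0) = u_0$ produces
\[
v(t,x) = u_0(x) + i\int_0^t \frac{f(\tau,\gamma(x,0,\tau))}{b(\tau,\gamma(x,0,\tau))}\, d\tau.
\]
Applying $U(t) = F_\phi(b)(t)$ to both sides gives exactly \eqref{FIO_formula_nh}. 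Uniqueness of $u$ in $\G(\R^{n+1})$ follows from Theorem~\ref{theo_first_order_equ} (i) applied to the difference of two solutions, which satisfies the homogeneous problem with vanishing data. Part (ii) is obtained by observing that under (h2)–(i2) the phase, the amplitude, the flow $\gamma$ (by the slow scale version of Proposition~\ref{prop_eikonal_PDE}) and $1/b$ are all slow scale regular, so every ingredient of the formula lives in the slow scale regular Colombeau calculus.

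The main obstacle I anticipate is the rigorous verification of the intertwining identity $L(U(t)v(t,\cdot)) = U(t)(D_t v(t,\cdot))$ together with the moderate (resp.\ slow scale) bookkeeping for the composite object $f(\tau,\gamma(\cdot,0,\tau))/b(\tau,\gamma(\cdot,0,\tau))$ viewed jointly in $(\tau,x)$, as well as the permissibility of differentiation under the $\tau$-integral in $\G$. All three are routine at the representative level, but they have to be carried out carefully so that the resulting expression is independent of representatives and satisfies the moderate (resp.\ slow scale) estimates that qualify it as an element of the appropriate Colombeau algebra.
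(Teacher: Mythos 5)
Your proposal is correct and follows exactly the route the paper intends: the paper states Theorem \ref{theo_FIO_formula_nh} without a written proof, but the material immediately preceding it (the closed form $F_\phi(b)(u_0)(t,x)=b(t,x)u_0(\gamma(x,t,0))$, the operator $U(t)$ with $U(0)=I$, and the explicit inverse $U(t)^{-1}$) is precisely the Duhamel/variation-of-parameters scaffolding you use, and your treatment of the invertibility of $b=\esp^{i\beta}$ under $(i1)$ and $(i2)$ supplies the only point the paper leaves implicit. No gaps.
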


\section{First order hyperbolic pseudodifferential equations with ge\-ne\-ra\-li\-zed symbols}
\label{sec_pseudo}
We now consider hyperbolic first order pseudodifferential equations of the type
\beq
\label{first_order_pseudo}
D_t u=a_1(t,D_x)u+a_0(t,x,D_x)u
\eeq
where $a_1$ and $a_0$ are generalised symbols of order $1$ and $0$, respectively, with $a_1$ real valued and independent of $x$. As mentioned in the Introduction,  we restrict ourselves to $t$-dependent principal parts. We begin by collecting what is known about equation \eqref{first_order_pseudo} in the Colombeau context. The following theorem is due to H\"ormann in \cite{GH:03}. The well-posedeness of the Cauchy problem
\beq
\label{CP_pseudo}
\begin{split}
 D_t u &=a_1(t,D_x)u+a_0(t,x,D_x)u+f \\
 u(0,\cdot)&=u_0,
\end{split}
\eeq
is intended in the Colombeau algebra $\G_{H^{\infty}((-T,T)\times\R^n)}$ based on $H^{\infty}((-T,T)\times\R^n)$. Here we use the notation $\G_{2,2}((-T,T)\times\R^n)$ introduced in \cite{BO:92}. The choice of this setting is motivated by a uniqueness issue: the solution $u$ to the problem \eqref{CP_pseudo} fails to be unique in the usual Colombeau algebra $\G([-T,T]\times\R^n)$ whereas is uniquely determined in $\G_{2,2}((-T,T)\times\R^n)$. Finally, with the expressions \emph{generalised symbol} and \emph{slow scale regular generalised symbol} we refer to the elements of the spaces $\G_{\Cinf([-T,T], S^m(\R^{2n}))}$ and $\G^\ssc_{\Cinf([-T,T], S^m(\R^{2n}))}$, respectively. Note that with the notation $S^m(\R^{2n})$ we intend symbols satisfying uniform estimates with respect to $x\in\R^n$ and $\xi\in\R^n$, i.e., $\sup_{(x,\xi)\in\R^{2n}}\lara{\xi}^{-m+|\alpha|}|\partial^\alpha_\xi\partial^\beta_x a(x,\xi)|<\infty$. In the sequel $(k,h)$ denotes the $\eta$-derivatives and $x$-derivatives of a symbol up to order $k$ and $h$, respectively.
\begin{theorem}
\label{theo_first_order_pseudo}
Let $a_1$ be a real valued generalised symbol of order $1$, $a_0$ a generalised symbol of order $0$, $f\in\G_{2,2}((-T,T)\times\R^n)$ and $u_0\in\G_{2,2}(\R^n)$.
\begin{itemize}
\item[(i)] If there exist representatives $(a_{1,\eps})_\eps$ and $(a_{0,\eps})_\eps$ of $a_1$ and $a_0$ respectively such that $(a_{1,\eps})_\eps$ is of log-type up to order $(k_n,l_n+1)$, with $k_n=3(\left\lfloor n/2\right\rfloor+1)$ and $l_n=2(n+2)$, and $(a_{0,\eps})_\eps$ is of log-type up to order $(k_n',j_n')$ with $k'_n=l'_n=\left\lfloor n/2\right\rfloor+1$, then the Cauchy problem \eqref{CP_pseudo} is well-posed in $\G_{2,2}((-T,T)\times\R^n)$.
\item[(ii)] If for large $|x|$ the net $(a_{0,\eps})_\eps$ does not depend on $t$ then one can set $k_n=1$, $l_n=n+2$, $k'_n=0$, $l'_n=n+1$ in $(i)$.
\item[(iii)] If $a_1$ and $a_0$ are slow scale regular generalised symbols and the log-type conditions in $(i)$ on $(a_{1,\eps})_\eps$ and $(a_{0,\eps})_\eps$ are replaced by slow scale log-type assumptions then for each $f\in\G^\infty_{2,2}((-T,T)\times\R^n)$ and $u_0\in\G^\infty_{2,2}(\R^n)$ the unique solution $u$ to the Cauchy problem \eqref{CP_pseudo} belongs to $\G^{\infty}_{2,2}((-T,T)\times\R^n)$.
\end{itemize}
\end{theorem}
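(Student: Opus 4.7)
The statement is attributed to H\"ormann \cite{GH:03}, and I would retrace that argument. Fix moderate representatives $(a_{1,\eps})_\eps$, $(a_{0,\eps})_\eps$, $(f_\eps)_\eps$, $(u_{0,\eps})_\eps$ meeting the log-type hypotheses of $(i)$. For each individual $\eps \in (0,1]$, the operator $a_{1,\eps}(t,D)+a_{0,\eps}(t,x,D)$ is a classical first-order pseudodifferential operator with real principal part, so the standard theory of strictly hyperbolic pseudodifferential Cauchy problems provides a unique smooth solution $u_\eps \in \Cinf([-T,T]; H^\infty(\R^n))$. The task reduces to converting the classical energy estimates into $\eps$-uniform bounds that are moderate (for existence in $\G_{2,2}$), and showing that $H^\infty$-negligible data force $H^\infty$-negligible solutions (for uniqueness in $\G_{2,2}$, which fails in $\G$ because the $L^\infty$-bounds there would demand far more Sobolev regularity).

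\textbf{Energy estimate and Gronwall.} For each $s \in \R$ I would differentiate $\|u_\eps(t)\|_s^2$ along the evolution. Since $a_{1,\eps}$ depends only on $(t,\xi)$ and is real valued at the principal level, $a_{1,\eps}(t,D)$ is self-adjoint on $L^2$ and commutes with $\lara{D}^s$, so the principal part does not contribute to $\frac{d}{dt}\|u_\eps(t)\|_s^2$. The zero-order piece and the commutator $[\lara{D}^s,a_{0,\eps}(t,x,D)]\lara{D}^{-s}$ are estimated by Calderon--Vaillancourt, which absorbs $\lfloor n/2\rfloor+1$ $\xi$- and $x$-derivatives of the symbol. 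The resulting inequality has the form
\[
\frac{d}{dt}\|u_\eps(t)\|_s^2 \le C_\eps(s)\,\|u_\eps(t)\|_s^2 + 2\|f_\eps(t)\|_s\,\|u_\eps(t)\|_s,
\]
with $C_\eps(s)$ controlled by a finite number of log-type symbolic seminorms of $a_{1,\eps}$ and $a_{0,\eps}$, hence $C_\eps(s)=O(\log(1/\eps))$. Gronwall's lemma yields
\[
\|u_\eps(t)\|_s \le \Bigl(\|u_{0,\eps}\|_s + \int_0^t\|f_\eps(\tau)\|_s\, d\tau\Bigr)\esp^{\int_0^t C_\eps(s)\, d\tau},
\]
with exponential factor of order $\eps^{-CT}$. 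This is $H^s$-moderate for every $s$, giving $u \in \G_{2,2}((-T,T)\times\R^n)$. Applying the same estimate to the difference of two solutions gives uniqueness.

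\textbf{Parts $(ii)$ and $(iii)$.} For $(ii)$, the hypothesis that $(a_{0,\eps})_\eps$ is $t$-independent for large $|x|$ confines the time-varying part of $a_{0,\eps}$ to a compact set in $x$; this eliminates the need for several commutators with $\lara{D}^s$ in the bookkeeping above, and the minimal orders drop to $k_n=1$, $l_n=n+2$, $k_n'=0$, $l_n'=n+1$. For $(iii)$, replacing log-type by slow-scale log-type yields $C_\eps(s)=O(\log\mu_\eps)$ for some slow-scale net $(\mu_\eps)_\eps$; the Gronwall exponent $\esp^{\int C_\eps}$ is then itself a slow-scale net, and the resulting $H^s$-bound on $u_\eps$ is uniform in $s$. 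Combined with the $\Ginf_{2,2}$-regularity of the data, this places $u$ in $\Ginf_{2,2}((-T,T)\times\R^n)$.

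\textbf{Main obstacle.} The technical heart, and where I expect the real work to lie, is the sharp bookkeeping that produces the specific exponents $k_n=3(\lfloor n/2\rfloor+1)$, $l_n=2(n+2)$, $k_n'=l_n'=\lfloor n/2\rfloor+1$. Each Calderon--Vaillancourt application costs $\lfloor n/2\rfloor+1$ derivatives; the factor $3$ in $k_n$ reflects three successive symbolic reductions (commutator with $\lara{D}^s$ of the principal part, commutator with $\lara{D}^s$ of the zero-order part, and the $L^2$-continuity estimate of the remainder), and the extra $+1$ in $l_n+1$ comes from passing one derivative from the $x$-variable to the $\xi$-variable in $a_{1,\eps}$. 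Arranging the estimates so that only the first $\xi$-derivative of $a_{1,\eps}$ (and not higher ones) need satisfy log-type is the most delicate aspect; I would follow the proof of \cite{GH:03} closely here rather than reinvent the combinatorics.
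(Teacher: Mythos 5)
The paper offers no proof of this theorem: it is quoted directly from H\"ormann \cite{GH:03}, so the only comparison available is with that reference. Your sketch --- fix moderate representatives, solve classically for each $\eps$, derive an $H^s$ energy inequality in which the real, $x$-independent principal part drops out by skew-adjointness of $ia_{1,\eps}(t,D_x)$, control the zero-order and commutator terms by Calder\'on--Vaillancourt using the log-type seminorms, and close with Gronwall to obtain $\eps^{-CT}$ (respectively slow-scale) factors --- is a faithful reconstruction of the argument given there, including the correct reasons why uniqueness holds in $\G_{2,2}$ but not in $\G$ and why the slow-scale hypotheses upgrade the solution to $\G^{\infty}_{2,2}$.
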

Our aim is now to find an FIO formula by which to express the solution $u$. More precisely, we will construct a generalised FIO parametrix for the hyperbolic Cauchy problem
\beq
\label{CP_pseudo_hom}
\begin{split}
 D_t u &=a_1(t,D_x)u+a_0(t,x,D_x)u \\
 u(0,\cdot)&=u_0\in\Gc(\R^n).
\end{split}
\eeq
Under suitable moderateness assumptions we will get well-posedness and $\Ginf$-regularity.

\subsection{The generalised phase function and the eikonal equation}
\label{subsec_eikonal_pseudo}
We begin by determining the generalised phase function $\phi(t,x,\eta)$, i.e. by solving the following eikonal equation.
\begin{proposition}
\label{prop_phase_pseudo}
The eikonal equation
\beq
\label{eikonal_pseudo}
\begin{split}
\partial_t\phi(t,x,\eta)&=a_1(t,\nabla_x\phi(t,x,\eta)),\\
\phi(0,x,\eta)&=x\eta,
\end{split}
\eeq
has the solution
\[
\phi(t,x,\eta)=x\eta+\int_{0}^t a_1(s,\eta)\, ds
\]
\begin{itemize}
\item[(i)] in $\G_{\Cinf([-T,T], S^1(\R^{2n}))}$ if $a_1\in\G_{\Cinf([-T,T], S^1(\R^{2n}))}$,
\item[(ii)] in $\G^\ssc_{\Cinf([-T,T], S^1(\R^{2n}))}$ if $a_1\in\G^\ssc_{\Cinf([-T,T], S^1(\R^{2n}))}$.
\end{itemize}
\end{proposition}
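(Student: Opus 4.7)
The plan is a direct construction and pointwise verification at the level of representatives, with the only real work reduced to routine symbol estimates for an indefinite $t$-integral of $a_1$. Fix any representative $(a_{1,\eps})_\eps$ of $a_1$ and define
\[
\phi_\eps(t,x,\eta) := x\eta + \int_0^t a_{1,\eps}(s,\eta)\, ds.
\]
Because $a_1$ is independent of $x$, one has $\nabla_x \phi_\eps(t,x,\eta) \equiv \eta$, so that
$\partial_t \phi_\eps(t,x,\eta) = a_{1,\eps}(t,\eta) = a_{1,\eps}(t,\nabla_x \phi_\eps(t,x,\eta))$, and $\phi_\eps(0,x,\eta) = x\eta$. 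Hence \eqref{eikonal_pseudo} holds representative-wise for every $\eps \in (0,1]$, which already disposes of the PDE itself.

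Next I would verify the regularity claims (i) and (ii). Writing $\psi_\eps(t,\eta) := \phi_\eps(t,x,\eta) - x\eta$, which does not depend on $x$, all $x$-derivatives of positive order vanish, whereas
\[
\partial_\eta^\alpha \psi_\eps(t,\eta)=\int_0^t \partial_\eta^\alpha a_{1,\eps}(s,\eta)\,ds,\qquad \partial_t^k \partial_\eta^\alpha \psi_\eps(t,\eta) = \partial_t^{k-1}\partial_\eta^\alpha a_{1,\eps}(t,\eta) \quad (k\ge 1).
\]
Combined with the fact that $t$ ranges over the compact interval $[-T,T]$, the $\Cinf([-T,T],S^1(\R^{2n}))$-seminorms of $(\psi_\eps)_\eps$ are controlled by those of $(a_{1,\eps})_\eps$, with no loss in the $\eps$-asymptotics. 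This yields case (i), and the slow-scale case (ii) is identical with slow-scale nets $(\omega_\eps)_\eps$ replacing the $O(\eps^{-N})$ bounds.

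Finally, well-definedness modulo negligibles is automatic: integrating a negligible (respectively negligible slow-scale) net over $[-T,T]$ leaves it negligible, so changing the representative of $a_1$ perturbs $\phi_\eps$ only by a negligible term. The only conceptual caveat concerns the summand $x\eta$, which is an $\eps$-independent polynomial not strictly belonging to the uniform class $S^1(\R^{2n})$; I would handle it by formulating the claim for $\phi-x\eta$ (which is how the class enters the composition calculus, since only $\nabla_x\phi=\eta+\nabla_x\psi$ appears there), in analogy with the reading pointed out in Remark~\ref{rem_phase}. I do not expect any substantial obstacle beyond this bookkeeping — the whole verification reduces to differentiation under the integral sign.
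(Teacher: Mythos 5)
Your direct verification at the level of representatives is correct and is exactly the argument the paper intends: the proposition is stated without proof precisely because $\nabla_x\phi_\eps\equiv\eta$ makes the eikonal equation reduce to $\partial_t\phi_\eps=a_{1,\eps}(t,\eta)$, and the symbol estimates for the $t$-integral are routine. Your caveat about the summand $x\eta$ not lying in the uniform class $S^1(\R^{2n})$ is well taken and consistent with the paper's own remark following the proposition, which likewise treats $\phi$ via the property $\nabla_x\phi=\eta$ rather than as a literal element of that symbol class.
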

\begin{remark}
(i) Note that $\phi(t,x,\eta)$ is a symbol of order $1$ with $\nabla_x\phi(t,x,\eta)=\eta$ but non-homogeneous with respect to $\eta$. This is not an obstacle in defining the Fourier integral operator
\[
\int_{\R^{n}}\esp^{i\phi(t,x,\eta)}b(t,x,\eta)\widehat{v}(\eta)\, \dslash\eta=\int_{\R^{2n}}\esp^{i(\phi(t,x,\eta)-y\eta)}b(t,x,\eta)v(y)\, dy\, \dslash\eta
\]
for $v\in\Gc(\R^n)$ and a generalised symbol $b$. Indeed, for giving a meaning to the oscillatory integral on the right-hand side (by means of an operator $L_\phi$ to be used in the integration by parts), it is sufficient that the following bound from below
\beq
\label{new_phase}
|\nabla_x\phi_\eps'(t,x,\eta)|^2+|\nabla_y\phi_\eps'(t,x,\eta)|^2+|\eta|^2|\nabla_\eta\phi_\eps'(t,x,\eta)|^2\ge \lambda_\eps|\eta|^2
\eeq
holds for all $t\in[-T,T]$, $(x,y,\eta)\in\R^{3n}$ and $\eps\in(0,1]$, with $\phi'(t,x,y,\eta)=\phi(t,x,\eta)-y\eta$ and  some strictly non-zero net $\lambda_\eps$. The condition \eqref{new_phase} is trivially satisfied by the phase function in Proposition \eqref{prop_phase_pseudo}. Indeed, for
\[
\phi'(t,x,y,\eta)=(x-y)\eta+\int_{0}^t a_1(s,\eta)\, ds
\]
one has $|\nabla_x\phi_\eps'(t,x,\eta)|^2=|\eta|^2$.

(ii) Easy computations at the level of representatives show that the Fourier integral operator with phase function $\phi(t,x,\eta)=x\eta+\int_{0}^t a_1(s,\eta)\, ds$ and symbol $b(t,x,\eta)\in\G_{\Cinf([-T,T], S^m(\R^{2n}))}$ maps $\Gc(\R^n)$ (or $\GS(\R^n)$) into $\G_{\Cinf([-T,T], \S(\R^n))}$. Recalling the embedding $\Gc(\Rìn)\subseteq\GS(\R^n)\subseteq \G_{2,2}(\R^n)$ (see \cite{Garetto:05b}) we have
\[
F_{\phi}(b):\GS(\R^n)\to \G_{\Cinf([-T,T], \S(\R^n))}\to\G_{2,2}((-T,T)\times\R^n)
\]
and
\[
a_1(t,D_x)F_\phi(b)+a_0(t,x,D_x)F_\phi(b): \GS(\R^n)\to \G_{\Cinf([-T,T], \S(\R^n))}\to\G_{2,2}((-T,T)\times\R^n).
\]
 In addition, if we work with slow scale generalised phase functions and symbols then the previous mapping properties hold between $\GSinf(\R^n)$ and $\G^\infty_{2,2}((-T,T)\times\R^n)$.
\end{remark}
Theorem 5.10 in \cite{Garetto:ISAAC07} concerning the composition of a generalised pseudodifferential operator with a generalised Fourier integral operator can be easily adapted to Fourier integral operators with a phase function as above. This means that under the slow scale assumptions of Theorem \ref{theo_first_order_pseudo}$(iii)$, for any $b\in\G^{\ssc}_{\Cinf{([-T,T], S^m(\R^{2n}))}}$ and $v\in\Gc(\R^n)$, we have
\beq
\label{form_comp_MO}
a_1(t,D_x)F_\phi(b)v+a_0(t,x,D_x)F_\phi(b)v=\int_{\R^n}\esp^{i\phi(t,x,\eta)}h(t,x,\eta)\widehat{v}(\eta)\, \dslash\eta + r(t,x,D_x)v,
\eeq
where $h$ and $r$ are generalised symbols of order $m+1$ and $-\infty$, respectively, $v\in\GS(\R^n)$ and the equality is intended in $\G_{2,2}((-T,T)\times\R^n)$. If we take slow scale regular phase functions and symbols then $h$ and $r$ are slow scale regular as well and $r(t,x,D_x)v\in\G^\infty_{2,2}((-T,T)\times\R^n)$. The symbol $h$ has the following asymptotic expansion (as defined in \cite{Garetto:ISAAC07}):
\begin{multline*}
h(t,x,\eta)\sim \sum_{\alpha\in\N^n}h_\alpha(t,x,\eta)=\sum_{\alpha\in\N^n}\frac{\partial^\alpha_\eta(a_1(t,\eta)+a_0(t,x,\eta))}{\alpha!}D^\alpha_z\big(\esp^{i\overline{\phi(t,z,x,\eta)}}b(t,z,\eta)\big)|_{z=x}\\
=\sum_{\alpha\in\N^n}\frac{\partial^\alpha_\eta(a_1(t,\eta)+a_0(t,x,\eta))}{\alpha!}D^\alpha_x b(t,x,\eta).
\end{multline*}
\subsection{The transport equations and the generalised symbol}
\label{subsec_transport_pseudo}
We proceed in the construction of a generalised FIO parametrix for the Cauchy problem \eqref{CP_pseudo_hom} by looking for a symbol $b\in\G^{\ssc}_{\Cinf{([-T,T], S^0(\R^{2n}))}}$ given by the asymptotic expansion $\sum_{k\in\N} b_k$, $b_k\in\G^{\ssc}_{\Cinf{([-T,T], S^{-k}(\R^{2n}))}}$. The symbols $b_k$ will solve some specific transport equations and will determine $b$ such that
\[
D_t F_\phi(b)-a_1(t,D_x)F_\phi(b)-a_0(t,x,D_x)F_\phi(b)
\]
is a $\Ginf_{2,2}$-regularising operator. We begin by observing that by the composition formula \eqref{form_comp_MO},
\[
a_1(t,D_x)F_\phi(b)v+a_0(t,x,D_x)F_\phi(b)v=F_\phi(h)u+r(t,x,D_x)v,\qquad\quad v\in\Gc(\R^n),
\]
where $r$ has order $-\infty$ and is slow scale regular when $a_1$ and $a_0$ are slow scale regular as well. In other words we can write
\[
D_t F_\phi(b)v-a_1(t,D_x)F_\phi(b)v-a_0(t,x,D_x)F_\phi(b)v=D_tF_\phi(b)v-F_\phi(h)v-r(t,x,D_x)v.
\]
Note that taking a suitable cut-off function $\psi$ we can write the action of $r(t,x,D_x)$ on $v$ as the integral operator
\[
\int_{\R^{2n}}\psi(y)k_r(t,x,y)v(y)\,dy,
\]
with $\psi(y)k_r(t,x,y)\in\G^\infty_{2,2}((-T,T)\times\R^{2n})$ rapidly decreasing with respect to $x$ and $y$. Then the generalised function $\int_{\R^{2n}}\psi(y)k_r(t,x,y)v(y)\,dy$ belongs to $\G^\infty_{2,2}((-T,T)\times\R^n)$. We recall that $\phi$ solves the eikonal equation \eqref{eikonal_pseudo} of the previous subsection. Hence, by making use of the asymptotic expansion of $h$ and $b$ written above and by collecting the terms with the same order we obtain the following transport equations:
\beq
\label{transport_pseudo}
\begin{split}
D_t b_0&=\sum_{|\alpha|=1}\frac{\partial^\alpha_\eta a_1(t,\eta)}{\alpha!}D^\alpha_x b_0+a_0b_0,\\
D_t b_{-1}&=\sum_{|\alpha|=1}\frac{\partial^\alpha_\eta a_1(t,\eta)}{\alpha!}D^\alpha_x b_{-1}+a_0b_{-1}+\sum_{|\alpha|=1}\frac{\partial^\alpha_\eta a_0(t,x,\eta)}{\alpha!}D^\alpha_x b_0+\sum_{|\alpha|=2}\frac{\partial^\alpha_\eta a_1(t,\eta)}{\alpha!}D^\alpha_x b_0,\\
\vdots&\\
D_t b_{-k}&=\sum_{|\alpha|=1}\frac{\partial^\alpha_\eta a_1(t,\eta)}{\alpha!}D^\alpha_x b_{-k}+a_0b_{-k}+f_{-k},
\end{split}
\eeq
where $f_{-k}\in\G^{\ssc}_{\Cinf{([-T,T], S^{-k}(\R^{2n}))}}$. From Proposition \ref{prop_FIO_formula} and Theorem \ref{theo_FIO_formula_nh} of the previous section we deduce the following statement on transport equations with generalised symbols as coefficients and initial data.
\begin{theorem}
\label{theo_transport_pseudo}
The Cauchy problem
\begin{align}
\label{transport_sym}
 D_t s &=\sum_{|\alpha|=1}\frac{\partial^\alpha_\eta a_1(t,\eta)}{\alpha!}D^\alpha_x s+ a_0(t,x,\eta) s +f\\
 s(0,\cdot,\cdot)&=s_0,
\end{align}
where
\begin{itemize}
\item[(i)] $m\in\R$, $f\in\G_{\Cinf([-T,T], S^{m}(\R^{2n}))}$ and $s_0\in\G_{\Cinf([-T,T], S^{m}(\R^{2n}))}$,
\item[(ii)] $a_1\in \G_{\Cinf([-T,T], S^1(\R^{2n}))}$ and $a_0\in \G_{\Cinf([-T,T], S^0(\R^{2n}))}$,
\item[(iii)] $a_0$ is of log-type,
\end{itemize}
has a solution $s\in\G_{\Cinf([-T,T], S^{m}(\R^{2n}))}$.
If we replace $\G$ with $\G^\ssc$ in $(i)$, $(ii)$, $(iii)$ and the assumption of log-type with slow scale log-type then $s\in\G^\ssc_{\Cinf([-T,T], S^{m}(\R^{2n}))}$.
\end{theorem}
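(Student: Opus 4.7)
The plan is to construct $s$ explicitly at the level of representatives via the method of characteristics and Duhamel's principle, as in Section \ref{sec_PDE}, but with $\eta$ playing the role of a parameter. The decisive simplification is that the coefficients $\partial_{\eta_j} a_{1,\eps}(t,\eta)$ of the transport term are independent of $x$, so the characteristic system of the principal part integrates explicitly. Rewriting \eqref{transport_sym} at the level of representatives as
\[
\partial_t s_\eps - \sum_{j=1}^n \frac{\partial a_{1,\eps}}{\partial\eta_j}(t,\eta)\, \partial_{x_j} s_\eps = i a_{0,\eps}(t,x,\eta)\, s_\eps + i f_\eps,
\]
the characteristic curves are simply
\[
\gamma_{j,\eps}(x,t,s,\eta) = x_j + \int_s^t \frac{\partial a_{1,\eps}}{\partial\eta_j}(\tau,\eta)\, d\tau,\qquad j=1,\ldots,n,
\]
and $v_\eps(s):=s_\eps(s,\gamma_\eps(x,t,s,\eta),\eta)$ satisfies a linear scalar ODE with datum $v_\eps(0)=s_{0,\eps}(\gamma_\eps(x,t,0,\eta),\eta)$. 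Solving this ODE via the integrating factor
\[
e_\eps(t,x,\eta):=\exp\Bigl(i\int_0^t a_{0,\eps}(\sigma,\gamma_\eps(x,t,\sigma,\eta),\eta)\,d\sigma\Bigr)
\]
yields the explicit Duhamel representative
\[
s_\eps(t,x,\eta)=e_\eps(t,x,\eta)\Bigl[s_{0,\eps}(\gamma_\eps(x,t,0,\eta),\eta)+i\int_0^t \frac{f_\eps(\sigma,\gamma_\eps(x,t,\sigma,\eta),\eta)}{e_\eps(\sigma,x,\eta)}\,d\sigma\Bigr],
\]
the symbol-valued analogue of the formulas in Theorem \ref{theo_transport_first}$(iii)$ and Theorem \ref{theo_FIO_formula_nh}.

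The content of the proof is then to verify that $(s_\eps)_\eps$ is $\Cinf([-T,T],S^m(\R^{2n}))$-moderate. Three routine observations drive the estimates when one differentiates the explicit formula in $x$ and $\eta$ via Fa\`a di Bruno: (a) $\gamma_\eps$ is affine in $x$, so $\partial^\beta_x\gamma_\eps=0$ for $|\beta|\ge 2$ and is the identity for $|\beta|=1$; (b) $\partial^\alpha_\eta\gamma_{j,\eps}$ is the time integral of $\partial^{\alpha+e_j}_\eta a_{1,\eps}$ and, since $a_1\in S^1$, a moderate symbol of order $-|\alpha|$ in $\eta$, uniformly in $x$; (c) the log-type hypothesis on $a_0$ gives $\sup_{x,\eta}|a_{0,\eps}(t,x,\eta)|=O(\log\eps^{-1})$, hence $|e_\eps|+|e_\eps^{-1}|=O(\eps^{-C})$, and every further $(x,\eta)$-derivative of $e_\eps$ only brings down products of moderate symbols (derivatives of $a_0$ and of $\gamma_\eps$) that leave the symbol order in $\eta$ unchanged. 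Combined with the $S^m$-moderateness of $s_{0,\eps}$ and $f_\eps$ and the chain rule, this gives the required bounds
\[
|\partial^\beta_x\partial^\alpha_\eta s_\eps(t,x,\eta)|\le C_{\alpha,\beta}\,\eps^{-N}\,\langle\eta\rangle^{m-|\alpha|}
\]
on $[-T,T]\times\R^{2n}$.

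Well-definedness of the class $s\in\G_{\Cinf([-T,T],S^m(\R^{2n}))}$ follows from a Gronwall argument in symbol seminorms applied to the Duhamel equation for $v_\eps-\wt v_\eps$: replacing $a_0,a_1,f,s_0$ by representatives differing by $\Neg$-elements produces a perturbation of $s_\eps$ negligible in every seminorm. The slow scale case $(ii)$ is obtained by the identical argument, with log-type replaced by slow-scale log-type so that $|e_\eps|+|e_\eps^{-1}|=O(\omega_\eps)$ for a slow scale net $(\omega_\eps)_\eps$, and all slow-scale regularity propagates through $\gamma_\eps$ and $e_\eps$ by the chain rule.

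The main (essentially combinatorial) obstacle lies in step (b)--(c): one must verify carefully that applying $\partial^\alpha_\eta$ to the composition $s_{0,\eps}(\gamma_\eps(x,t,0,\eta),\eta)$ and to the Duhamel integrand yields a symbol of order exactly $m-|\alpha|$ uniformly in $x$. The decisive input is that $\partial_\eta\gamma_\eps$ is itself of order $-1$ in $\eta$, which is precisely what makes the chain rule compatible with the $S^m$-scale, while the log-type control on $a_0$ is precisely what keeps $e_\eps$ moderate (neither negligible nor of faster-than-polynomial blow-up in $\eps$).
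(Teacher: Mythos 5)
Your proposal is correct and follows essentially the same route as the paper: the paper likewise invokes the explicit characteristic/Duhamel formula (via Theorem \ref{theo_FIO_formula_nh}), with $\eta$ as a parameter and the affine characteristics $\gamma_i(t,x,\eta)=x_i+\int_0^t\partial_{\eta_i}a_1(\sigma,\eta)\,d\sigma$, and then checks that each factor lies in $\G_{\Cinf([-T,T],S^m(\R^{2n}))}$. Your elaboration of the symbol estimates (order $-|\alpha|$ of $\partial^\alpha_\eta\gamma_\eps$, log-type control of the exponential factor) fills in exactly what the paper dismisses as "one easily checks".
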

\begin{proof}
By applying Theorem \ref{theo_FIO_formula_nh} to the Cauchy problem \eqref{transport_sym} we have the following solution formula:
\[
s(t,x,\eta)=b(t,x,\eta)\biggl(s_{0}(t,\gamma(t,x,\eta),\eta)+i\int_{0}^t\frac{f(\tau,\gamma(t,x,\eta,\tau),\eta)}{b(\tau,\gamma(t,x,\eta,\tau),\eta)}\, d\tau\biggr)
\]
where, for $i=1,...,n$,
\[
\begin{split}
\gamma_i(t,x,\eta)&=x_i+\int_0^t\partial_{\eta_i}a_{1}(\sigma,\eta)\, d\sigma,\\
\gamma_i(t,x,\eta,\tau)&=x_i+\int_0^t\partial_{\eta_i}a_{1}(\sigma,\eta)\, d\sigma-\int_0^\tau\partial_{\eta_i}a_{1}(\sigma,\eta)\, d\sigma,\\
b(t,x,\eta)&=\esp^{i\int_{0}^t a_0(s,\gamma(t,x,\eta,s),\eta)\, ds}.
\end{split}
\]
One easily checks that $b(t,x,\eta)$ and $s_{0}(t,\gamma_1(t,x,\eta),...,\gamma_2(t,x,\eta),\eta)$ are elements of the  generalised symbol spaces $\G_{\Cinf([-T,T], S^{0}(\R^{2n}))}$ and $\G_{\Cinf([-T,T], S^{m}(\R^{2n}))}$, respectively, and that
\[
\int_{0}^t\frac{f(\tau,\gamma(t,x,\eta,\tau),\tau),\eta)}{b(\tau,\gamma(t,x,\eta,\tau),\tau),\eta)}d\tau
=\int_0^t\esp^{-i\int_0^\tau a_0(s,\gamma(t,x,\eta,s),\eta)\, ds}f(\tau,\gamma(t,x,\eta,\tau),\tau),\eta)\, d\tau
\]
belongs to $\G_{\Cinf([-T,T], S^{m}(\R^{2n}))}$. From this it follows that $s(t,x,\eta)$ is a generalised symbol of order $m$ satisfying the initial condition $s(0,x,\eta)=s_0(x,\eta)$.
\end{proof}
We can now go back to the system of equations \eqref{transport_pseudo}. With the help of Theorem \ref{theo_transport_pseudo} we can solve the equations in \eqref{transport_pseudo}:
\begin{proposition}
\label{prop_transport_pseudo}
If $a_1$ and $a_0$ are slow scale regular generalised symbols of order $1$ and $0$, respectively, satisfying the assumptions of Theorem \ref{theo_first_order_pseudo}$(iii)$ then there exist symbols $b_{-j}\in\G^\ssc_{\Cinf([-T,T], S^{-j}(\R^{2n}))}$, $j\in\N$, which solve the transport equations \eqref{transport_pseudo} and such that $b_0(0,x,\eta)=1$ and $b_{-j}(0,x,\eta)=0$ for all $j>0$.
\end{proposition}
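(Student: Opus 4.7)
The plan is to construct the symbols $b_{-j}$ recursively in $j$ by applying the slow scale version of Theorem \ref{theo_transport_pseudo} to each transport equation in \eqref{transport_pseudo}. For the base case $k=0$, the equation reads
\[
D_t b_0 = \sum_{|\alpha|=1} \frac{\partial^\alpha_\eta a_1(t,\eta)}{\alpha!} D^\alpha_x b_0 + a_0(t,x,\eta)\, b_0, \qquad b_0(0,x,\eta) = 1,
\]
with initial datum $1 \in S^0(\R^{2n})$, which is trivially slow scale regular. Theorem \ref{theo_transport_pseudo} then yields $b_0 \in \G^\ssc_{\Cinf([-T,T], S^0(\R^{2n}))}$ satisfying the initial condition, and the slow scale log-type hypothesis on $a_0$ needed to invoke that theorem is inherited from the standing assumptions of Theorem \ref{theo_first_order_pseudo}(iii).

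For the inductive step, I would assume that $b_0, \ldots, b_{-(k-1)}$ have been constructed in the required slow scale regular classes and examine the structure of $f_{-k}$. It arises by collecting all contributions of order $-k$ in the asymptotic expansion of the composition symbol $h$ that are not already absorbed into the transport operator applied to $b_{-k}$ itself or into $a_0 b_{-k}$. A direct bookkeeping shows that
\[
f_{-k} = \sum_{\substack{|\alpha|\geq 2 \\ |\alpha|+j = k+1}} \frac{\partial^\alpha_\eta a_1(t,\eta)}{\alpha!} D^\alpha_x b_{-j} + \sum_{\substack{|\alpha|\geq 1 \\ |\alpha|+j = k}} \frac{\partial^\alpha_\eta a_0(t,x,\eta)}{\alpha!} D^\alpha_x b_{-j}.
\]
In both sums only $b_{-j}$ with $j \leq k-1$ appear, which are already at our disposal, and the order count $1 - |\alpha| - j = -k$ (respectively $-|\alpha| - j = -k$) places each summand in the symbol class of order $-k$. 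Slow scale regularity is preserved under the finitely many products and derivatives involved, so $f_{-k} \in \G^\ssc_{\Cinf([-T,T], S^{-k}(\R^{2n}))}$. Applying Theorem \ref{theo_transport_pseudo} once more, with inhomogeneity $f_{-k}$ and initial datum $s_0 = 0$, then produces $b_{-k} \in \G^\ssc_{\Cinf([-T,T], S^{-k}(\R^{2n}))}$ with $b_{-k}(0,x,\eta) = 0$, closing the induction.

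The main obstacle in this plan is the verification at each step that $f_{-k}$ genuinely lies in the slow scale regular symbol class of order exactly $-k$. This is not conceptually difficult but requires careful order-counting together with the stability of the classes $S^m(\R^{2n})$ and of the slow scale property under sums, products, and finitely many differentiations. Once this symbolic bookkeeping is in place, the existence part of Theorem \ref{theo_transport_pseudo} delivers each $b_{-k}$ in the desired class without further work.
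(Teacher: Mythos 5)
Your proposal is correct and follows exactly the route the paper intends: the paper states Proposition \ref{prop_transport_pseudo} without a written proof, presenting it as an immediate recursive application of Theorem \ref{theo_transport_pseudo} to the system \eqref{transport_pseudo}. Your explicit bookkeeping formula for $f_{-k}$ is consistent with the displayed case $k=1$ in \eqref{transport_pseudo} (it reproduces the terms $\sum_{|\alpha|=2}\partial^\alpha_\eta a_1\,D^\alpha_x b_0/\alpha!$ and $\sum_{|\alpha|=1}\partial^\alpha_\eta a_0\,D^\alpha_x b_0/\alpha!$), and the order count and the observation that only $b_{-j}$ with $j\le k-1$ enter $f_{-k}$ correctly close the induction.
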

\subsection{Construction of a generalised FIO parametrix and solution formula for the Cauchy problem}
\label{subsec_param_FIO}
We now have all the tools for constructing a generalised FIO parametrix for the Cauchy problem \eqref{CP_pseudo_hom}. Combining Theorem \ref{theo_first_order_pseudo} with Proposition \ref{prop_phase_pseudo} and Proposition \ref{prop_transport_pseudo} we obtain the following statement.
\begin{theorem}
\label{theo_FIO_par}
Let $u_0\in\Gc(\R^n)$ and let $a_1$ and $a_0$ fulfill the assumptions of Theorem \ref{theo_first_order_pseudo}$(iii)$ on $a_1$ and $a_0$. Then there exists a generalised Fourier integral operator $F_\phi(b)$ with slow scale phase function and symbol $b$ of order $0$ such that
\begin{align*}
D_tF_\phi(b)u_0&=a_1(t,D_x)F_\phi(b)u_0+a_0(t,x,D_x)F_\phi(b)u_0+r(t,x,D_x)u_0,\\
F_\phi(b)u_0(0,\cdot)&=u_0+r_0(t,D_x)u_0,
\end{align*}
where $r\in\G^{\ssc}_{\Cinf([-T,T],S^{-\infty}(\R^{2n}))}$ and $r_0\in\G^{\ssc}_{S^{-\infty}(\R^{2n})}$.
\end{theorem}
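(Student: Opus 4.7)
The plan is to combine the slow scale generalised phase function from Proposition \ref{prop_phase_pseudo} with the iterative construction of the amplitude from Proposition \ref{prop_transport_pseudo}. Concretely, I would take $\phi(t,x,\eta)=x\eta+\int_0^t a_1(s,\eta)\,ds$ and solve successively the transport equations \eqref{transport_pseudo} to obtain $b_{-k}\in\G^\ssc_{\Cinf([-T,T],S^{-k}(\R^{2n}))}$ for each $k\in\N$, normalised by $b_0(0,x,\eta)=1$ and $b_{-k}(0,x,\eta)=0$ for $k\ge 1$. A Borel-type summation adapted to the Colombeau symbol spaces then produces a representative $b\in\G^\ssc_{\Cinf([-T,T],S^0(\R^{2n}))}$ satisfying $b-\sum_{k<N}b_{-k}\in\G^\ssc_{\Cinf([-T,T],S^{-N}(\R^{2n}))}$ for every $N$, with slow scale bounds preserved through the summation.

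With $b$ in hand, the next step is to compute $D_tF_\phi(b)u_0-(a_1(t,D_x)+a_0(t,x,D_x))F_\phi(b)u_0$. Differentiating under the integral and using the eikonal equation $\partial_t\phi=a_1(t,\eta)$ gives
\[
D_tF_\phi(b)u_0=F_\phi(a_1(t,\eta)b)u_0+F_\phi(D_tb)u_0,
\]
while the composition formula \eqref{form_comp_MO} applied to this phase yields
\[
(a_1(t,D_x)+a_0(t,x,D_x))F_\phi(b)u_0=F_\phi(h)u_0+\tilde r(t,x,D_x)u_0,
\]
where $h\sim\sum_{\al}(\al!)^{-1}\partial^\al_\eta(a_1(t,\eta)+a_0(t,x,\eta))D^\al_xb$ and $\tilde r$ is slow scale regular of order $-\infty$. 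The transport equations \eqref{transport_pseudo} were designed precisely so that the order-$(-k)$ contribution to $D_tb+a_1(t,\eta)b-h$ vanishes for every $k$; hence $\rho:=D_tb+a_1(t,\eta)b-h$ lies in $\G^\ssc_{\Cinf([-T,T],S^{-\infty}(\R^{2n}))}$ by the asymptotic property of the Borel sum.

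It then remains to collapse the residue $F_\phi(\rho)u_0-\tilde r(t,x,D_x)u_0$ into a single pseudodifferential operator of order $-\infty$. Because $\phi(t,x,\eta)-x\eta=\int_0^ta_1(s,\eta)\,ds$ depends only on $(t,\eta)$, one has $F_\phi(\rho)u_0=r'(t,x,D_x)u_0$ with $r'(t,x,\eta)=\esp^{i\int_0^ta_1(s,\eta)\,ds}\rho(t,x,\eta)$; since $\partial^\beta_\eta\int_0^ta_1(s,\eta)\,ds$ is slow scale regular of order $1-|\beta|$ for $|\beta|\ge 1$, the exponential multiplier has all its $\eta$-derivatives slow scale bounded and multiplication by it leaves $\G^\ssc_{\Cinf([-T,T],S^{-\infty}(\R^{2n}))}$ invariant. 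Setting $r:=r'-\tilde r$ then gives the first identity. For the initial condition, at $t=0$ the phase reduces to $x\eta$ and $b(0,x,\eta)-1$ lies in $\G^\ssc_{S^{-\infty}(\R^{2n})}$ thanks to the normalisation of the $b_{-k}$ combined with the Borel-sum property; hence $F_\phi(b)u_0(0,\cdot)=u_0+r_0(x,D_x)u_0$ with $r_0(x,\eta):=b(0,x,\eta)-1$.

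The hard part will be the Borel summation itself inside the Colombeau symbol spaces together with uniform slow scale control: the classical completeness argument must be adapted to a net $(b_{-k,\eps})_\eps$ by choosing a single cutoff sequence $\chi(\eta/R_{k,\eps})$ depending on both $k$ and $\eps$ which simultaneously dominates all the seminorms of $b_{-k,\eps}$ while keeping moderateness of order $\eps^{-N}$ and slow scale bounds. Checking that this summation commutes with $D_t$ and with the composition formula \eqref{form_comp_MO} up to $\G^\ssc_{\Cinf([-T,T],S^{-\infty})}$-errors, and that the exponential multiplier $\esp^{i\int_0^ta_1(s,\eta)\,ds}$ does not spoil the slow scale character, are the technical points on which the proof hinges.
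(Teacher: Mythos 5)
Your proposal is correct and follows essentially the same route as the paper, which obtains the theorem by combining the explicit phase of Proposition \ref{prop_phase_pseudo}, the transport-equation symbols $b_{-j}$ of Proposition \ref{prop_transport_pseudo}, an asymptotic summation $b\sim\sum_k b_{-k}$, and the composition formula \eqref{form_comp_MO}; the Borel-type summation and its compatibility with the slow scale estimates, which you rightly flag as the technical core, are exactly what the asymptotic-expansion machinery of \cite[Theorem 2.2]{Garetto:ISAAC07} (invoked elsewhere in the paper) provides. Your observation that $F_\phi(\rho)$ collapses to a pseudodifferential operator because $\phi-x\eta$ depends only on $(t,\eta)$, and that the unimodular multiplier $\esp^{i\int_0^t a_1(s,\eta)\,ds}$ preserves $\G^{\ssc}_{\Cinf([-T,T],S^{-\infty}(\R^{2n}))}$, correctly accounts for the form of the remainders $r$ and $r_0$.
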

\begin{corollary}
\label{corol_theo_FIO_par}
Let the hypotheses of Theorem \ref{theo_first_order_pseudo}$(iii)$ be satisfied. Then the solution $u\in\G_{2,2}((-T,T)\times\R^n)$ of the Cauchy problem
\[
\begin{split}
 D_t u &=a_1(t,D_x)u+a_0(t,x,D_x)u \\
 u(0,\cdot)&=u_0\in\Gc(\R^n),
\end{split}
\]
is equal to $F_\phi(b)u_0$ modulo $\G^{\infty}_{2,2}((-T,T)\times\R^n))$.
\end{corollary}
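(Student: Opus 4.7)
The plan is to use Theorem~\ref{theo_FIO_par} to rewrite the identity as a Cauchy problem for the difference $w := u - F_\phi(b)u_0$ whose forcing term and initial datum are $\G^\infty_{2,2}$-regular, and then invoke the well-posedness statement of Theorem~\ref{theo_first_order_pseudo}(iii) to conclude that $w$ itself is $\G^\infty_{2,2}$-regular.

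First, set $v := F_\phi(b)u_0$. By Theorem~\ref{theo_FIO_par},
\begin{align*}
D_t v &= a_1(t,D_x)v + a_0(t,x,D_x)v + r(t,x,D_x)u_0,\\
v(0,\cdot) &= u_0 + r_0(D_x)u_0,
\end{align*}
where $r$ is a slow scale regular symbol of order $-\infty$ in $(t,x,\xi)$ and $r_0$ is slow scale regular of order $-\infty$ in $(x,\xi)$. Subtracting this from the equation satisfied by $u$, the difference $w := u - v$ solves
\begin{align*}
D_t w &= a_1(t,D_x)w + a_0(t,x,D_x)w - r(t,x,D_x)u_0,\\
w(0,\cdot) &= -r_0(D_x)u_0.
\end{align*}

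Next, I would verify that $r(t,x,D_x)u_0 \in \G^\infty_{2,2}((-T,T)\times\R^n)$ and $r_0(D_x)u_0 \in \G^\infty_{2,2}(\R^n)$. Working at the level of representatives, $(r_\eps)_\eps$ and $(r_{0,\eps})_\eps$ are smoothing symbols whose seminorms on $S^{-m}(\R^{2n})$, for every $m\in\N$, grow only as a slow scale net. Since $u_0\in\Gc(\R^n)$ has a compactly supported representative with moderate $L^\infty$-bounds, the integral representation of $r_\eps(t,x,D_x)u_{0,\eps}$ (respectively $r_{0,\eps}(D_x)u_{0,\eps}$) can be estimated in any Sobolev norm $H^s$ by the product of the slow scale seminorm of the symbol and a single $\eps^{-N_0}$ coming from the $L^2$-moderateness of $u_{0,\eps}$. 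Since slow scale nets are dominated by any $\eps^{-\delta}$, this produces a single moderate exponent controlling all $H^s$-seminorms uniformly in $t\in[-T,T]$, which is exactly $\G^\infty_{2,2}$-membership.

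Finally, the Cauchy problem satisfied by $w$ has coefficients $a_1$ and $a_0$ fulfilling the slow scale assumptions of Theorem~\ref{theo_first_order_pseudo}(iii), while, by the previous step, the source term and initial datum lie in $\G^\infty_{2,2}$. Applying Theorem~\ref{theo_first_order_pseudo}(iii) therefore yields $w\in\G^\infty_{2,2}((-T,T)\times\R^n)$, i.e. $u - F_\phi(b)u_0 \in \G^\infty_{2,2}((-T,T)\times\R^n)$, which is the asserted equality modulo $\G^\infty_{2,2}$. The main obstacle will be the mapping-property step: because $u_0\in\Gc(\R^n)$ is not assumed $\Ginf$-regular, the entire regularisation must come from the smoothing action of $r$ and $r_0$, and the technical point is to package the Sobolev estimates so that the slow scale growth of the symbol seminorms, combined with the moderate growth of $u_{0,\eps}$, produces the single moderate exponent required by $\G^\infty_{2,2}$.
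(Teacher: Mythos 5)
Your proposal is correct and follows essentially the same route as the paper: form the difference with $F_\phi(b)u_0$, use Theorem \ref{theo_FIO_par} to see that it solves the same Cauchy problem with source $r(t,x,D_x)u_0$ and initial datum $r_0$ applied to $u_0$, note that these are $\G^\infty_{2,2}$-regular because $r$ and $r_0$ are slow scale regular smoothing operators, and conclude by the regularity part of Theorem \ref{theo_first_order_pseudo}(iii). The only difference is that you spell out the mapping-property step ($\G^\ssc$-regular order $-\infty$ operators map $\Gc$ into $\G^\infty_{2,2}$), which the paper simply asserts.
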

\begin{proof}
From Theorem \ref{theo_FIO_par} we have that $v=F_\phi(b)u_0-u\in\G_{2,2}((-T,T)\times\R^n)$ solves the Cauchy problem
\[
\begin{split}
 D_t v &=a_1(t,D_x)v+a_0(t,x,D_x)v+r(t,x,D_x)u_0, \\
 v(0,\cdot)&=r_0(x,D_x)u_0,
\end{split}
\]
where $r(t,x,D_x)u_0\in\G^{\infty}_{2,2}((-T,T)\times\R^n)$ and $r_0(x,D_x)u_0\in\G^{\infty}_{2,2}(\R^n)$. Theorem \ref{theo_first_order_pseudo} yields the $\Ginf$-regularity of $v$, i.e., $v\in\G^{\infty}_{2,2}((-T,T)\times\R^n)$.
\end{proof}

\section{Microlocal investigation of the solution of a generalised hyperbolic Cauchy problem}
\label{sec_micro}
This section provides a microlocal investigation of the solution $u\in\G(\R^{n+1})$ of the hyperbolic Cauchy problem studied in Section \ref{sec_PDE} and Section \ref{sec_pseudo}. First we will concentrate on the microlocal properties of $u$, viewed as a generalised function in both the variables $t$ and $x$ ($\WF_{\Ginf}u$), and second we will fix $t$ and investigate the generalised function $u(t,\cdot)\in\G(\R^n)$ microlocally ($\WF_{\Ginf}u(t,\cdot)$). Since $u$ can be written as the action of a generalised Fourier integral operator $F_\phi(b)$ on the initial data $u_0\in\Gc(\R^n)$ we focus our attention on the microlocal properties of generalised Fourier integral operators. We begin with some abstract theoretical results that we will finally apply to the special case of $u=F_\phi(b)u_0$ under suitable assumptions on the phase function $\phi$.
\subsection{Microlocal properties of generalised Fourier integral operators: the wave front set}
We begin by recalling some results obtained in \cite{GHO:09}. Let $\Om$ be an open subset of $\R^n$. As a preliminary step we observe that when $\phi\in\wt{\Phi}(\Om\times\R^p)$, $U\subseteq\overline{U}\Subset\Om$, $\Gamma\subseteq\R^n\setminus 0$, $V\subseteq \Om\times\R^p\setminus 0$, then
\[
\mathop{{\rm{Inf}}}\limits_{\substack{y\in U, \xi\in \Gamma\\ (y,\theta)\in V}}\frac{|\xi-\nabla_y\phi(y,\theta)|}{|\xi|+|\theta|}:=\biggl[\biggl(\inf_{\substack{y\in U, \xi\in \Gamma\\ (y,\theta)\in V}}\frac{|\xi-\nabla_y\phi_\eps(y,\theta)|}{|\xi|+|\theta|}\biggr)_\eps\biggr]
\]
is a well-defined element of $\wt{\C}$.

In the sequel $W^\ssc_{\phi,a}$ denotes the set of all points $(x_0,\xi_0)\in\CO{\Om}$ with the property that for all relatively compact open neighborhoods $U(x_0)$ of $x_0$, for all open conic neighborhoods $\Gamma(\xi_0)\subseteq \R^n\setminus 0$ of $\xi_0$, for all open conic neighborhoods $V$ of {\rm{cone\,supp}}\,$a\cap C^\ssc_\phi$ such that $V\cap (U(x_0)\times\R^p\setminus 0)\neq\emptyset$ the generalised number
\beq
\label{gen_num_inv}
\mathop{{\rm{Inf}}}\limits_{\substack{y\in U(x_0), \xi\in \Gamma(\xi_0)\\ (y,\theta)\in V\cap( U(x_0)\times\R^p\setminus 0)}}\frac{|\xi-\nabla_y\phi(y,\theta)|}{|\xi|+|\theta|}
\eeq
is not slow scale-invertible.
\begin{theorem}
\label{theo_GHO}
Let $I_\phi(a)$ be the functional in $\LL(\Gc(\Om),\wt{\C})$ given by
\[
v\to\int_{\Om\times\R^n}\esp^{i\phi(y,\xi)}a(y,\xi)v(y)\, dy\, \dslash\xi,
\]
where $\phi$ is a slow scale generalised phase function and $a$ is a regular symbol of order $m$. The $\Ginf$-wave front set of $I_\phi(a)$ is contained in the set $W^\ssc_{\phi,a}$.
\end{theorem}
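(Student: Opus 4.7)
I will apply the Fourier characterisation of $\WF_{\Ginf}$ for basic functionals recalled at the end of Subsection 1.1 (Theorem 3.13 of \cite{Garetto:06a}). The functional $I_\phi(a)$ is basic, with representing net of distributions
\[
T_\eps(v) = \int \esp^{i\phi_\eps(y,\theta)} a_\eps(y,\theta) v(y)\, dy\, \dslash\theta,
\]
so the claim $(x_0,\xi_0) \notin \WF_{\Ginf}(I_\phi(a))$ reduces to producing $\varphi \in \Cinfc(\Om)$ with $\varphi(x_0)=1$ and a conic neighbourhood $\Gamma'$ of $\xi_0$ such that
\[
J_\eps(\xi) := T_\eps(\varphi\, \esp^{-i\cdot\xi}) = \int_{\Om \times \R^p} \esp^{i(\phi_\eps(y,\theta)-y\xi)} \varphi(y) a_\eps(y,\theta)\, dy\, \dslash\theta
\]
satisfies, for some $N \in \N$ and every $\ell \in \R$, $\sup_{\xi \in \Gamma'} \lara{\xi}^\ell |J_\eps(\xi)| = O(\eps^{-N})$ as $\eps \to 0$. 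The strategy will be the classical non-stationary phase argument, adapted to the slow-scale/moderate Colombeau setting.

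\textbf{Decomposition and integrations by parts.} Starting from $(x_0,\xi_0) \notin W^\ssc_{\phi,a}$, I select $U(x_0) \Subset \Om$, a conic $\Gamma(\xi_0) \subseteq \R^n \zs$ and an open conic neighbourhood $V$ of $\mathrm{cone\,supp}\, a \cap C^\ssc_\phi$ on which the generalised number in \eqref{gen_num_inv} is slow-scale invertible. I fix $\varphi \in \Cinfc(U(x_0))$ with $\varphi(x_0)=1$, shrink to $\Gamma' \Subset \Gamma(\xi_0)$, and introduce a smooth partition of unity $\chi_1 + \chi_2 + \chi_3 \equiv 1$ on $U(x_0) \times (\R^p \zs)$, with each $\chi_j$ homogeneous of degree $0$ in $\theta$ for $|\theta|\geq 1$: $\chi_1$ supported in a smaller cone $V_1 \Subset V$ around $\mathrm{cone\,supp}\, a \cap C^\ssc_\phi$; $\chi_2$ supported in the complement of $C^\ssc_\phi$ (where by the definition of $C^\ssc_\phi$ applied locally $|\nabla_\theta \phi_\eps|$ has a slow-scale positive lower bound); and $\chi_3$ supported outside $\mathrm{cone\,supp}\, a$. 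On the $\chi_3$-piece the negligibility estimate \eqref{cond_conic_supp} for $a_\eps$ off its cone-support, together with one $\theta$-integration by parts to secure integrability, yields $|J_{3,\eps}(\xi)| = O(\eps^q)$ for every $q$. On the $\chi_2$-piece I apply the transpose of $L_\theta = -i\, |\nabla_\theta \phi_\eps|^{-2}\, \nabla_\theta \phi_\eps \cdot \nabla_\theta$, which reproduces $\esp^{i\phi_\eps}$; each iteration gains a factor $\lara{\theta}^{-1}$ with slow-scale coefficients. Splitting the $\theta$-integration into the regions $|\theta|\lesssim |\xi|$ (where homogeneity of $\phi_\eps$ gives $|\xi - \nabla_y \phi_\eps|\geq |\xi|/2$ and one can further apply $L_y$-IBPs, see below, to supply $\lara{\xi}^{-\ell}$) and $|\theta|\gtrsim|\xi|$ (where $\lara{\theta}^{-N}$ alone dominates $\lara{\xi}^{-\ell}$), the required $\xi$-decay follows. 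On the $\chi_1$-piece the hypothesis furnishes precisely the slow-scale lower bound $|\xi - \nabla_y \phi_\eps(y,\theta)| \geq \sigma_\eps(|\xi|+|\theta|)$ on $\mathrm{supp}\, \chi_1 \varphi$ for $\xi \in \Gamma'$; applying the transpose of
\[
L_y = -i\, |\xi - \nabla_y \phi_\eps|^{-2}(\xi - \nabla_y \phi_\eps) \cdot \nabla_y
\]
a total of $N$ times produces a factor $(|\xi|+|\theta|)^{-N}$, which against $\lara{\theta}^m$ is absorbed into $\lara{\xi}^{-\ell}\lara{\theta}^{-n-1}$ for $N$ sufficiently large, securing both decay and $\theta$-integrability.

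\textbf{Main obstacle.} The delicate point is the bookkeeping of the $\eps$-factors accumulated by iterated IBPs. Regularity of $a$ ensures that its symbol seminorms are $O(\eps^{-N_0})$ with a single $N_0$ across all derivative orders; slow-scale regularity of $\phi$ together with the slow-scale lower bounds on $|\nabla_\theta \phi_\eps|$ (on $\mathrm{supp}\, \chi_2$) and on $|\xi - \nabla_y \phi_\eps|/(|\xi|+|\theta|)$ (on $\mathrm{supp}\, \chi_1$) guarantee that every coefficient produced by ${}^t L_\theta$, ${}^t L_y$ and their iterates lies in a class of slow-scale symbols stable under the operations involved. Since slow-scale nets are $O(\eps^{-q})$ for every $q > 0$, they are absorbed harmlessly into $\eps^{-N_0}$, yielding a bound of the form $\eps^{-N_0} \lara{\xi}^{-\ell}$ with $N_0$ independent of $\ell$, which is exactly what is required for membership in $\Ginf_{\S\hskip-2pt,0}(\Gamma')$. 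Verifying that the iterated rational expressions in $\nabla_\theta \phi_\eps$ and $\xi - \nabla_y \phi_\eps$ remain slow-scale symbols of the expected order, and that slow-scale invertibility persists under the necessary shrinking of $V$ and $\Gamma(\xi_0)$, will be the main technical burden; once established, the rest is routine non-stationary phase.
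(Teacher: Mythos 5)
The paper itself gives no proof of Theorem \ref{theo_GHO}: it is stated as a result recalled from \cite{GHO:09}, so there is no internal argument to compare yours against. Your proposal is, as far as I can judge, the natural proof and very likely the one carried out in the cited reference: reduce to the Fourier characterisation of $\WF_{\Ginf}$ for basic functionals, split the $(y,\theta)$-domain by a conic partition of unity into a piece near ${\rm{cone\,supp}}\,a\cap C^\ssc_\phi$ (where the slow-scale invertibility of \eqref{gen_num_inv} drives the $L_y$-integrations by parts), a piece off $C^\ssc_\phi$ (where $|\nabla_\theta\phi_\eps|$ is slow-scale bounded below and $L_\theta$ applies), and a piece off the conic support, and then track the $\eps$-asymptotics. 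Your identification of the decisive point --- that a regular amplitude contributes a single $\eps^{-N_0}$ across all derivative orders, while any finite product of slow-scale factors is $O(\eps^{-\delta})$ for every $\delta>0$ and hence does not spoil the uniform-in-$\ell$ exponent demanded by $\Ginf_{\S\hskip-2pt,0}(\Gamma')$ --- is exactly right, and it is the reason the hypotheses require a \emph{regular} symbol and a \emph{slow scale} phase.

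Two inaccuracies in your treatment of the $\chi_3$-piece should be repaired. First, on $\supp\chi_3$ you cannot integrate by parts with $L_\theta$, since $\nabla_\theta\phi_\eps$ may vanish there; you must instead use the operator built from the full gradient $\nabla_{y,\theta}$ that underlies the oscillatory-integral definition (condition $(ii)$ of Definition \ref{def_phase_moderate}, with the extra $\xi$-dependence of the phase $\phi_\eps(y,\theta)-y\xi$ accounted for). Second, and more substantively, that procedure yields $|J_{3,\eps}(\xi)|\le C_q\,\eps^q\lara{\xi}^{j}$ with a \emph{fixed} positive power $j$, which does not give $\sup_{\xi\in\Gamma'}\lara{\xi}^\ell|J_{3,\eps}(\xi)|=O(\eps^{-N})$ for $\ell>-j$; smallness in $\eps$ alone does not produce decay in $\xi$. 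The correct conclusion is that $(J_{3,\eps})_\eps\in\Nt(\R^n)$, so that this term may be dropped from the representative of $\mF(\varphi\, I_\phi(a))$ before testing membership in $\Ginf_{\S\hskip-2pt,0}(\Gamma')$ --- which, by its definition, only requires the existence of \emph{one} representative with the stated decay. With that adjustment, and the routine compactness arguments needed to make the local slow-scale lower bounds uniform on the supports of $\chi_1$ and $\chi_2$, the argument closes.
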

The previous theorem can clearly be stated for a slow scale generalised phase function $\phi(x,y,\xi)$ (in the variable $(y,\xi)$) and a regular amplitude $a(x,y,\xi)$ on $\Om'\times\Om\times\R^p$, with an open subset $\Om'$ of $\R^{n'}$. In this case
\[
I_\phi(a)v=\int_{\Om'\times\Om\times\R^n}\esp^{i\phi(x,y,\xi)}a(x,y,\xi)v(x,y)\, dx\, dy\, \dslash\xi,
\]
is the kernel of the generalised Fourier integral operator
\[
A:\Gc(\Om)\to \G(\Om'):u\to \int_{\Om\times\R^n}\esp^{i\phi(x,y,\xi)}a(x,y,\xi)u(y)\, dy\, \dslash\xi.
\]
We will therefore use the notation $K_A$ for the functional $I_\phi(a)$. The next theorem relates $\WF_{\Ginf}Au$ with $\WF_{\Ginf}K_A$ and $\WF_{\Ginf}u$.

We recall that given $E\subseteq\Om'\times\Om\times\R^{n'}\times\R^n$, $E'\subseteq (\Om'\times\R^{n'})\times(\Om\times\R^n)$ is the set of all $(x,\xi,y,\eta)$ such that $(x,y,\xi,-\eta)\in E$. A subset $E'$ defines a relation $E'\circ G$ of $\Om\times\R^n$ with $\Om'\times\R^{n'}$ when $G\subseteq \Om\times\R^n$. In detail,
\[
E'\circ G:=\{(x,\xi)\in\Om'\times\R^{n'}:\ \exists (y,\eta)\in G,\ (x,\xi,y,\eta)\in E'\}.
\]
\begin{theorem}
\label{theo_ChazP}
Let $\phi(x,y,\xi)$ be a slow scale generalised phase function in the variable $(y,\xi)$ and $a(x,y,\xi)$ be a regular amplitude on $\Om'\times\Om\times\R^p$. Let $A$ be the corresponding Fourier integral operator with kernel $K_A\in\LL(\G_c(\Om'\times\Om),\wt{\C})$ and $u$ a generalised function in $\Gc(\Om)$. If
\beq
\label{rel_cond}
\WF'_{\Ginf}K_A\circ\WF_{\Ginf} u\subseteq \CO{\Om'}
\eeq
then
\beq
\label{chain_ChazP}
\WF_{\Ginf}Au\subseteq \big(\WF'_{\Ginf}K_A\circ\WF_{\Ginf} u\big) \cup \big(\WF'_{\Ginf}K_A\circ(\singsupp_{\Ginf} u\times\{0\})\big),
\eeq
where the right-hand side is a conic subset of $\CO{\Om'}$.
\end{theorem}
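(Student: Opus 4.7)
The strategy is to read off the microlocal regularity of $Au$ from the Fourier transform of a localisation $\varphi\cdot Au$ near $x_0$, by rewriting this Fourier transform as an integral pairing the kernel $K_A$ (as a basic functional) with a cut-off of $u$. Using Fourier inversion and interchanging integrations one obtains, at the level of representatives,
\begin{equation*}
\widehat{\varphi_\eps(Au)_\eps}(\xi) \;=\; (2\pi)^{-n}\int \widehat{\varphi\,\psi\,K_{A,\eps}}(\xi,-\eta)\,\widehat{(\psi u)_\eps}(\eta)\,d\eta,
\end{equation*}
for $\varphi\in\Cinfc(\Om')$ supported near $x_0$ and $\psi\in\Cinfc(\Om)$ equal to $1$ on the relevant part of $\supp u$. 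The plan is then to bound $\widehat{\varphi\psi K_{A,\eps}}(\xi,-\eta)$ in suitable conic regions via the hypothesis on $\WF_{\Ginf}K_A$, and to handle the complementary $\eta$-directions via decay of $\widehat{(\psi u)_\eps}(\eta)$ coming from $\WF_{\Ginf}u$.

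First I would reduce to the case where $\singsupp_{\Ginf}u$ is contained in an arbitrarily small neighborhood of one point $y_0$. Compactness of $\singsupp_{\Ginf}u$ (since $u\in\Gc(\Om)$) allows one to choose $\chi\in\Cinfc(\Om)$ equal to $1$ near it, whence $(1-\chi)u\in\Gcinf(\Om)$; under the slow-scale phase and regular amplitude hypotheses, Theorem~4.6 of~\cite{Garetto:ISAAC07} gives $A((1-\chi)u)\in\Ginf(\Om')$, contributing nothing to $\WF_{\Ginf}(Au)$. A finite partition of unity $\chi=\sum_j\chi_j$ with $\supp\chi_j$ arbitrarily small then reduces matters to analysing $A(\chi_j u)$ with $\singsupp_{\Ginf}(\chi_j u)$ concentrated near a prescribed $y_0$.

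Fix $(x_0,\xi_0)$ outside the right-hand side of~\eqref{chain_ChazP} and choose $\varphi,\psi$ as above with $\varphi(x_0)=1$, $\psi=1$ on a neighborhood of $y_0$, and supports so small that the exclusions
\[
(y,\eta)\in\WF_{\Ginf}u \ \Rightarrow\ (x_0,y,\xi_0,-\eta)\notin\WF_{\Ginf}K_A,\qquad y\in\singsupp_{\Ginf}u \ \Rightarrow\ (x_0,y,\xi_0,0)\notin\WF_{\Ginf}K_A
\]
hold uniformly in $y\in\supp\psi$. I would then split the $\eta$-integration in the displayed formula into three regions: (a) $|\eta|\le\delta|\xi|$, where the hypothesis on $\singsupp_{\Ginf}u\times\{0\}$ delivers rapid decay of $\widehat{\varphi\psi K_{A,\eps}}(\xi,-\eta)$ in a conic neighborhood of $(\xi_0,0)$; (b) $|\eta|>\delta|\xi|$ with $-\eta/|\eta|$ in a fixed conic neighborhood of the relevant $\eta$-fibre of $\WF_{\Ginf}u$ above $\supp\psi$, where the first exclusion gives the analogous decay of the kernel Fourier transform; (c) $|\eta|>\delta|\xi|$ in the complementary cone, where by definition of $\WF_{\Ginf}u$ the factor $\widehat{(\psi u)_\eps}(\eta)$ is itself rapidly decreasing modulo $\eps^{-N}$. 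Summing the three contributions yields the required bound $\sup_{\xi\in\Gamma(\xi_0)}\lara{\xi}^l|\widehat{\varphi_\eps(Au)_\eps}(\xi)|=O(\eps^{-M})$ for every $l$.

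The main obstacle I anticipate is the $\eps$-uniform bookkeeping in the cone decomposition. Both $\WF_{\Ginf}K_A$ and $\WF_{\Ginf}u$ are defined through nets of estimates with losses of type $\eps^{-N}$, and each localisation produces its own exponent; combining the three pieces into a \emph{single} final exponent $M$ requires compactness of the $\eta$-slice of $\WF_{\Ginf}u$ above $\supp\psi$, together with a finite open cover argument that is uniform in $\eps$. Hypothesis~\eqref{rel_cond} plays a crucial role here: it is precisely what guarantees that the zero section can be isolated in region (a) and, more subtly, that the regions (a)--(c) can be chosen so that no residual contribution from stationary phase at infinity appears. Translating all these qualitative statements into quantitative $\eps$-dependent estimates based on the slow-scale invertibility built into the definition of $\WF_{\Ginf}$ is the technical heart of the argument.
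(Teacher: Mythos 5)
Your proposal follows essentially the same route as the paper's proof: the reduction via the $\Ginf$-mapping property of $A$ and a partition of unity near the singular support, the Plancherel-type identity pairing the Fourier transform of the localised kernel with $\widehat{\psi u}$, and the three-cone decomposition of the frequency integral (small $|\eta|/|\xi|$ handled by the $\singsupp_{\Ginf}u\times\{0\}$ term, the $\WF_{\Ginf}u$-directions handled by the kernel estimate, the rest by decay of $\widehat{\psi u}$), with hypothesis \eqref{rel_cond} ruling out the region where neither factor decays. The only cosmetic difference is that the paper localises in $x$ outside the Fourier transform, producing an extra convolution with $\widehat{\alpha}$ and a corresponding split $\theta\in W$ versus $\theta\notin W$, whereas you absorb the cut-off into the kernel transform directly; both are standard and equivalent here.
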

\begin{proof}
Let $L$ be a neighborhood of $\singsupp_{\Ginf}u$. Let $\Lambda$ be the right-hand side of the assertion \eqref{chain_ChazP} with $\singsupp_{\Ginf} u$ replaced by $L$. It is clear that $\Lambda$ is conic and the inclusion $\Lambda\subseteq \CO{\Om'}$ follows from \eqref{rel_cond}.

We will prove that if $(x_0,\xi_0)\not\in\Lambda$ then $(x_0,\xi_0)\not\in\WF_{\Ginf} Au$. From this it will follow $\WF_{\Ginf} Au\subseteq \Lambda$. At this point since $L$ is chosen arbitrarily and $u$ has compact support we get the desired inclusion \eqref{chain_ChazP}.

Let $K_1$ be a neighborhood of $x_0$, and $K_2=\pi_\Om(\singsupp_{\Ginf} K_A\cap (K_1\times L))$. From the assumptions on phase function and amplitude we have that $A$ maps $\Ginf$ into $\Ginf$. It follows that for a cut-off function $\psi$ which is identically $1$ on a neighborhood of $\singsupp_{\Ginf}u$ we can write $\singsupp_{\Ginf}Au=\singsupp_{\Ginf}A(\psi u)$. If $K_2=\emptyset$ then taking $\psi$ such that $\supp(\psi u)\subseteq L$ we have that $Au|_{K_1}$ can be written as $\int K_A(x,y)\psi u(y)\, dy$, where $K_A(x,y)$ is $\Ginf$-regular. Hence $Au|_{K_1}\in\Ginf$ and $(x_0,\xi)\not\in\WF_{\Ginf} Au$ for each $\xi\neq 0$.

We assume that $K_2$ is not empty and for $y_0\in K_2$ we set
\[
\begin{split}
\Sigma_0 &=\{(\xi,\eta):\ (x_0,y_0,\xi,\eta)\in\WF_{\Ginf}K_A\},\\
\Gamma_0 &=\{ \eta:\ (y_0,\eta)\in\WF_{\Ginf}u\}.
\end{split}
\]
We claim that there exist:
\begin{itemize}
\item[-] a conic open neighborhood $\wt{\Sigma}_0\subseteq\R^{n'+n}\setminus 0$ of $\Sigma_0$,
\item[-] a conic open neighborhood $\wt{\Gamma}_0\subseteq\R^n\setminus 0$ of $\Gamma_0$,
\item[-] a conic neighborhood $W\subseteq\R^{n'}\setminus 0$ of $\xi_0$,
\item[-] a number $\delta>0$ and neighborhoods $V_1$ and $V_2$ of $x_0$ and $y_0$, respectively,
\end{itemize}
such that
\beq
\label{incl_1}
W\cap\big({\wt{\Sigma}_0}'\circ\wt{\Gamma}_0\big)=\emptyset,
\eeq
\beq
\label{incl_2}
V_1\times V_2\times\{(\xi,\eta):\ \xi\in W,\ |\eta|\le\delta|\xi|\}\cap\WF_{\Ginf}K_A=\emptyset,
\eeq
\beq
\label{incl_3}
\wt{\Sigma}_0\cap\{(\xi,\eta):\ -\eta\in\wt{\Gamma}_0,\ |\xi|\le \delta|\eta|\}=\emptyset.
\eeq
But $(x_0,\xi_0)\not\in\Lambda$ implies $(x_0,\xi_0)\not\in\WF'_{\Ginf}K_A\circ\WF_{\Ginf} u$. Therefore, since
${\Sigma_0}$, ${\Gamma_0}$ and ${\Sigma_0}\circ{\Gamma_0}$ are closed and conic there exist neighborhoods $W$, $\wt{\Sigma}_0$ and $\wt{\Gamma}_0$ as above satisfying \eqref{incl_1}. If $(x_0,\xi_0)\not\in\Lambda$ then $(x_0,y_0,\xi_0,0)\not\in\WF_{\Ginf}K_A$. Indeed, $(x_0,y_0,\xi_0,0)\in\WF_{\Ginf}K_A$ implies $(x_0,\xi_0)\in\WF_{\Ginf}K_A'\circ(L\times\{0\})\subseteq\Lambda$. It follows that there exist neighborhoods $V_1$ and $V_2$ as above and $\delta>0$ such that \eqref{incl_2} holds. Finally, we have $\{(0,\eta):\, -\eta\in\Gamma_0\}\cap\Sigma_0=\emptyset$. Indeed, if $(x_0,0,y_0,\eta)\in\WF_{\Ginf}K_A$ and $(y_0,-\eta)\in\WF_{\Ginf}u$ then $(x_0,0)\in\WF'_{\Ginf}K_A\circ\WF_{\Ginf} u$. This is absurd since $\WF'_{\Ginf}K_A\circ\WF_{\Ginf} u\subseteq \CO{\Om'}$. This assertion yields \eqref{incl_3} for a suitable choice of $\wt{\Sigma}_0$ and $\wt{\Gamma}_0$. By shrinking $V_1$ and $V_2$ we can also impose
\beq
\label{incl_4}
V_1\subseteq K_1,\qquad (V_2\times(\wt{\Gamma}_0)^{\rm{c}})\cap\WF_{\Ginf}u=\emptyset,\qquad (V_1\times V_2\times(\wt{\Sigma}_0)^{\rm{c}})\cap\WF_{\Ginf}K_A=\emptyset,
\eeq
where the complements are intended in $\R^{n}\setminus 0$ and $\R^{n'+n}\setminus 0$, respectively.

Letting $\alpha\in\Cinfc(V_1)$ identically $1$ near $x_0$ we have to prove that $\widehat{\alpha Au}$ is $\Ginf_{\S\hskip-2pt,0}$-regular on a certain conic neighborhood of $\xi_0$. Let $\psi\in\Cinfc(\Om)$ identically $1$ on a neighborhood of $\singsupp_{\Ginf}u$ and with $\supp\psi\subseteq L$. Since $\alpha A((1-\psi)u)\in\Gcinf(\Om')$ we have $\widehat{\alpha Au}=\widehat{\alpha A(\psi u)}$ modulo $\GSinf$. We write
\[
\widehat{\alpha A\psi u}(\xi)=\lara{K_A,\alpha(x)\esp^{-ix\xi}\psi(y)u(y)}.
\]
From $K_2=\pi_\Om(\singsupp_{\Ginf} K_A\cap (K_1\times L))$ we have that we can restrict our attention to $y\in K_2$. Taking a partition of unity in a neighborhood of $K_2$ (based on the neighborhoods $V_2$ with the property above) we observe that $\widehat{\alpha A\psi u}(\xi)$ is a finite sum of terms of the form
\[
\lara{fK_A,\alpha(x)\esp^{-ix\xi}\beta(y)u(y)},
\]
where $f\in\Cinfc(V_1\times V_2)$ and $\beta\in\Cinfc(V_2)$. Applying the Fourier and inverse Fourier transform we can write
\[
\lara{fK_{A},\alpha(x)\esp^{-ix\xi}\beta(y)u(y)}=\int_{\R^{n'+n}}\widehat{fK_{A}}(\theta,\eta)\widehat{\alpha}(\xi-\theta)\widehat{\beta u}(-\eta)\, \dslash\theta\, \dslash\eta.
\]
Now, from \eqref{incl_4} we have that the tempered generalised function $\widehat{fK_{A}}$ is $\Ginf_{\S\hskip-2pt,0}$-regular outside $\wt{\Sigma}_0$ and that $\widehat{\alpha}(\xi-\theta)\widehat{\beta u}(-\eta)$ is $\Ginf_{\S\hskip-2pt,0}$-regular with respect to $(\theta,\eta)$ outside $\{(\theta,\eta):\ -\eta\in\wt{\Gamma}_0,\ |\theta|\le \delta|\eta|\}$. This fact combined with \eqref{incl_3} guarantees that the integral above is absolutely convergent. Note that
\[
|\widehat{\alpha}(\xi-\theta)|\le c_N(1+|\xi-\theta|)^{-N},
\]
for arbitrary $N\in\N$ and that if $W'$ is a sufficiently small conic open neighborhood of $\xi_0$ with $\overline{W'}\subseteq W$ then there exists $c>0$ such that $|\xi-\theta|\ge c(|\xi|+|\theta|)$ for all $\xi\in W'$ and $\theta\in W^{\rm{c}}$. Hence, the estimate
\[
|\widehat{\alpha}(\xi-\theta)|\le c'_N(1+|\xi|+|\theta|)^{-N}
\]
holds for all $\xi\in W'$ and $\theta\in W^{\rm{c}}$. It follows that
\beq
\label{first}
\int_{\theta\not\in W}\widehat{fK_{A}}(\theta,\eta)\widehat{\alpha}(\xi-\theta)\widehat{\beta u}(-\eta)\, \dslash\theta\, \dslash\eta\in\Ginf_{\S\hskip-2pt,0}(W').
\eeq
In order to understand the integration over $W\times\R^n$ we begin by proving that $
\widehat{fK_{A}}(\theta,\eta)\widehat{\beta u}(-\eta)$ is $\Ginf_{\S\hskip-2pt,0}$-regular in $(\theta,\eta)$ for $\theta\in W$ and $\eta\in\R^n$. From \eqref{incl_2} we have that $\widehat{fK_{A}}$ is $\Ginf_{\S\hskip-2pt,0}$-regular in $(\theta,\eta)$ for $\theta\in W$ and $|\eta|\le \delta|\theta|$. If $|\eta|\ge \delta|\theta|$ and $-\eta\not\in\wt{\Gamma}_0$ \eqref{incl_4} implies that $\widehat{\beta u}$ is $\Ginf_{\S\hskip-2pt,0}$-regular in $\eta$ and therefore also in $(\theta,\eta)$. Finally, when $\theta\in W$ and $-\eta\in\wt{\Gamma}_0$ then $(\theta,\eta)\not\in\wt{\Sigma}_0$ from \eqref{incl_1}. Since $\widehat{fK_{A}}$ is $\Ginf_{\S\hskip-2pt,0}$-regular outside $\wt{\Sigma}_0$ we conclude that $\widehat{fK_{A}}(\theta,\eta)\widehat{\beta u}(-\eta)$ has the same regularity property. At this point it is clear that the generalised function given by
\beq
\label{second}
\int_{\theta\in W}\widehat{fK_{A}}(\theta,\eta)\widehat{\alpha}(\xi-\theta)\widehat{\beta u}(-\eta)\, \dslash\theta\, \dslash\eta
\eeq
is $\Ginf_{\S\hskip-2pt,0}$-regular on $W$. A combination of \eqref{first} with \eqref{second} yields $\widehat{\alpha Au}\in\Ginf_{\S\hskip-2pt,0}(W')$. This means $(x_0,\xi_0)\not\in\WF_{\Ginf}Au$ and completes the proof.
\end{proof}
We recall that the $\Ginf$-microsupport of a generalised symbol $a$ on $\Om\times\R^n$ is the complement of the set of points $(x_0,\xi_0)$ with the following property: there exist a representative $(a_\eps)_\eps$ of $a$, a relatively compact open neighborhood $U$ of $x_0$, a conic neighborhood $\Gamma\subseteq\R^n\setminus 0$ of $\xi_0$ and a natural number $N\in\N$ such that
\[
\forall m\in\R\, \forall\alpha,\beta\in\N^n\, \exists c>0\, \exists\eta\in(0,1]\, \forall(x,\xi)\in U\times\Gamma\, \forall\eps\in(0,\eta]\qquad
|\partial^\alpha_\xi\partial^\beta_x a_\eps(x,\xi)|\le c\lara{\xi}^m\eps^{-N}.
\]
It is denoted by $\mu\supp_{\Ginf} a$.
\begin{remark}
\label{rem_pseudo}
Condition \eqref{rel_cond} is satisfied when $A=a(x,D)$ is a generalised pseudodifferential operator with regular symbol. In this case we have from Remark 5.15 in \cite{GHO:09} that if $(x,y,\xi,\eta)\in \WF_{\Ginf}K_A=\WF_{\Ginf}K_{a(x,D)}$ then $x=y$ and $\eta=-\xi$ with $\xi\neq 0$ and $(x,\xi)\in\mu\supp_{\Ginf}a$. Hence
\[
\WF'_{\Ginf}K_{a(x,D)}\circ\WF_{\Ginf}u\subseteq\{(x,\xi):\, (x,x,\xi,-\xi)\in\WF_{\Ginf}K_{a(x,D)}\}\cap\WF_{\Ginf}u
\]
which implies $\WF'_{\Ginf}K_{a(x,D)}\circ\WF_{\Ginf} u\subseteq \CO{\Om}$. Since
\[
\WF'_{\Ginf}K_{a(x,D)}\circ(\singsupp_{\Ginf} u\times\{0\})=\emptyset
\]
Theorem \ref{theo_ChazP} yields, for $u\in\Gc(\Om)$,
\begin{multline*}
\WF_{\Ginf}a(x,D)u\subseteq \WF'_{\Ginf}K_{a(x,D)}\circ\WF_{\Ginf} u\\
\subseteq\{(x,\xi):\, (x,x,\xi,-\xi)\in\WF_{\Ginf}K_{a(x,D)}\}\cap\WF_{\Ginf}u\subseteq \mu\supp_{\Ginf}a\cap\WF_{\Ginf}u.
\end{multline*}
Note that the same inclusion was already obtained in \cite[Theorem 3.6]{GH:05}.
\end{remark}
We now consider a special class of generalised Fourier integral operators which fulfill \eqref{rel_cond} and for them we rephrase Theorem \ref{theo_ChazP}. This requires some preliminary results concerning the set $C_\phi$ of singular points and the set $W^{\ssc}_{\phi,a}$ defined before Theorem \ref{theo_GHO}.
\begin{proposition}
\label{prop_phi_C1}
Let $[(\phi)_\eps]\in\wt{\Phi}(\Om\times\R^p)$ and $\phi\in{\mathcal{C}}^1(\Om\times\R^p\setminus 0)$ such that $\lim_{\eps\to 0}\phi_\eps=\phi$ in ${\mathcal{C}}^1(\Om\times\R^p\setminus 0)$. Then
\begin{itemize}
\item[(i)] $C^\ssc_{[(\phi_\eps)_\eps]}\subseteq\{(x,\xi)\in\CO{\Om}:\ \nabla_\xi\phi(x,\xi)=0\}$,
\item[(ii)] for any generalised symbol $a$,
\[
W^{\ssc}_{[(\phi)_\eps],a}\subseteq\{(x,\nabla_x\phi(x,\theta)):\, \theta\neq 0,\ (x,\theta)\in{\rm{cone\,supp}}\, a,\ \nabla_\theta\phi(x,\theta)=0\}.
\]
\end{itemize}
\end{proposition}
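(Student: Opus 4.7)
For part $(i)$, the plan is to prove the contrapositive: if $\nabla_\xi\phi(x_0,\xi_0)\neq 0$, then $(x_0,\xi_0)\notin C^\ssc_{[(\phi_\eps)_\eps]}$. Because $\phi$ is positively homogeneous of degree one in $\xi$, the map $\nabla_\xi\phi$ is homogeneous of degree zero, so a uniform lower bound on a single cross-section of a conic neighborhood propagates. Continuity of $\nabla_\xi\phi$ at $(x_0,\xi_0/|\xi_0|)$ yields a relatively compact neighborhood $U(x_0)$ and an open conic neighborhood $\Gamma(\xi_0)\subseteq\R^p\setminus 0$ with $|\nabla_\xi\phi|^2\ge 2c>0$ on $\overline{U(x_0)}\times(\overline{\Gamma(\xi_0)}\cap S^{p-1})$, hence on $\overline{U(x_0)}\times\Gamma(\xi_0)$. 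The hypothesis that $\phi_\eps\to\phi$ in $\mathcal{C}^1(\Om\times\R^p\setminus 0)$ forces $\nabla_\xi\phi_\eps\to\nabla_\xi\phi$ uniformly on the compact cross-section, so $|\nabla_\xi\phi_\eps|^2\ge c$ on $U(x_0)\times\Gamma(\xi_0)$ for all sufficiently small $\eps$. A constant positive lower bound is a fortiori slow scale-invertible in the sense of Definition \ref{def_C_phi_ssc}, so $(x_0,\xi_0)\notin C^\ssc_{[(\phi_\eps)_\eps]}$.

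For part $(ii)$ I again argue by contrapositive: assume $\xi_0\neq\nabla_x\phi(x_0,\theta)$ for every $\theta\in\R^p\setminus 0$ with $(x_0,\theta)\in{\rm{cone\,supp}}\,a$ and $\nabla_\theta\phi(x_0,\theta)=0$, and set
\[
K:=\{\omega\in S^{p-1}:(x_0,\omega)\in{\rm{cone\,supp}}\,a\cap C^\ssc_\phi\}.
\]
Then $K$ is compact, and by part $(i)$ (with $\xi$ renamed to $\theta$) $K$ sits inside $\{\omega:\nabla_\theta\phi(x_0,\omega)=0\}$; so by hypothesis $\xi_0\neq r\nabla_x\phi(x_0,\omega)$ for every $\omega\in K$ and every $r>0$, while $\xi_0\neq 0$ takes care of $r=0$. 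The function $(y,\xi,\theta)\mapsto|\xi-\nabla_y\phi(y,\theta)|/(|\xi|+|\theta|)$ is continuous away from $(\xi,\theta)=(0,0)$ and, by the degree-one homogeneity of $\nabla_y\phi$ in $\theta$, is invariant under joint positive scaling of $(\xi,\theta)$. Compactness after this normalisation produces $c>0$, an open conic neighborhood $\Gamma(\xi_0)$ of $\xi_0$, a relatively compact open neighborhood $U(x_0)$ of $x_0$, and an open conic neighborhood $W\subseteq\R^p\setminus 0$ of $K$ with
\[
|\xi-\nabla_y\phi(y,\theta)|\geq 2c\,(|\xi|+|\theta|)\qquad\text{for all }(y,\xi,\theta)\in U(x_0)\times\Gamma(\xi_0)\times W.
\]
Uniform $\mathcal{C}^1$ convergence of $\phi_\eps$ to $\phi$ on the compact set $\overline{U(x_0)}\times S^{p-1}$, combined with the same homogeneity, gives $|\nabla_y\phi_\eps(y,\theta)-\nabla_y\phi(y,\theta)|\le\delta(\eps)|\theta|$ with $\delta(\eps)\to 0$, so the estimate persists for $\phi_\eps$ with constant $c$ for all sufficiently small $\eps$.

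It remains to exhibit a conic open neighborhood $V$ of ${\rm{cone\,supp}}\,a\cap C^\ssc_\phi$ with $V\cap(U(x_0)\times\R^p\setminus 0)\neq\emptyset$ and $V\cap(U(x_0)\times\R^p\setminus 0)\subseteq U(x_0)\times W$. Since ${\rm{cone\,supp}}\,a\cap C^\ssc_\phi$ is closed and conic in $\Om\times(\R^p\setminus 0)$ with fibre $\{x_0\}\times K$ over $x_0$, a routine closedness-and-compactness argument (shrinking $U(x_0)$ if needed) produces such a $V$ as a tight tubular conic neighborhood of the set. The resulting admissible triple $(U(x_0),\Gamma(\xi_0),V)$ yields the infimum \eqref{gen_num_inv} bounded below by $c$ uniformly for small $\eps$, hence slow scale-invertible, proving $(x_0,\xi_0)\notin W^\ssc_{[(\phi_\eps)_\eps],a}$. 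The main obstacle I anticipate is precisely this final step: $V$ must simultaneously be an open conic neighborhood of the \emph{entire} set ${\rm{cone\,supp}}\,a\cap C^\ssc_\phi$ yet have its intersection with the tube over $U(x_0)$ confined to the safe region $U(x_0)\times W$. The degenerate case $K=\emptyset$ requires a separate probe-point construction: using the symmetry $\nabla_x\phi(x_0,-\omega)=-\nabla_x\phi(x_0,\omega)$ and the phase function condition $\nabla_{y,\xi}\phi\neq 0$, locate a direction $\omega_1\in S^{p-1}$ along which $\xi_0$ is not a nonnegative multiple of $\nabla_x\phi(x_0,\omega_1)$ and $\nabla_y\phi(x_0,\omega_1)\neq 0$, then adjoin a small open conic patch around $(x_0,\omega_1)$ to a tight neighborhood of the set.
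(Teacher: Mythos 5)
Your proof of part (i) coincides with the paper's: contrapositive, a positive lower bound for $|\nabla_\xi\phi|$ on a compact cross-section obtained from continuity, transferred to the nets $\phi_\eps$ by the uniform $\mathcal{C}^1$ convergence, and the remark that a constant lower bound is a fortiori slow scale-invertible.

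For part (ii) you take a genuinely different route. The paper argues directly rather than by contraposition: given $(x_0,\xi_0)\in W^{\ssc}_{[(\phi_\eps)_\eps],a}$ it runs through a shrinking sequence of admissible triples $(U_n,\Gamma_n,V_n)$, extracts points $x_n\in U_n$, $\xi_n\in\Gamma_n$, $(x_n,\theta_n)\in V_n$ with $|\theta_n|=1$ and $|\xi_n-\nabla_x\phi_{\eps_n}(x_n,\theta_n)|\le|\log\eps_n|^{-1}(|\xi_n|+1)$, and passes to subsequential limits: $\theta_n\to\theta_0$ with $(x_0,\theta_0)\in\mathrm{cone\,supp}\,a\cap C^\ssc_{[(\phi_\eps)_\eps]}$, whence $\nabla_\theta\phi(x_0,\theta_0)=0$ by part (i); boundedness of $|\xi_n|$ falls out of the estimate itself, and the limit identifies $\xi_0$ with a positive multiple of $\nabla_x\phi(x_0,\theta_0)$. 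The shrinking $V_n$ dissolve what you call your main obstacle: no single global conic neighbourhood of $\mathrm{cone\,supp}\,a\cap C^\ssc_\phi$ with controlled trace over $U(x_0)$ ever has to be built. (Your construction of such a $V$ --- the union of $U'\times\widehat{W}$ with the tube over the complement of a slightly smaller closed neighbourhood of $x_0$ --- does work, so that step, though laborious, is not where the trouble lies.)

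The genuine gap is in your compactness step. The asserted bound $|\xi-\nabla_y\phi(y,\theta)|\ge 2c(|\xi|+|\theta|)$ must hold uniformly up to the corner of the normalised region where $|\xi|/(|\xi|+|\theta|)\to 0$; there the quotient degenerates to $|\nabla_y\phi(y,\theta/|\theta|)|$, so the bound forces $\nabla_y\phi$ to be bounded away from zero on $\overline{U(x_0)}\times(\overline{W}\cap S^{p-1})$. Your exclusion ``$\xi_0\neq r\nabla_x\phi(x_0,\omega)$ for $r>0$, and $\xi_0\neq 0$ for $r=0$'' only rules out zeros of the quotient whose normalised $\xi$-component is nonzero; it says nothing about this corner, and the corner cannot be controlled from the stated hypotheses, because the limit $\phi$ is only $\mathcal{C}^1$ and need not inherit the nondegeneracy of the nets $\phi_\eps$. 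Concretely, for $\phi_\eps(y,\xi)=\eps y\xi$ on $\R\times\R$ and $a\equiv 1$ one has $\phi\equiv 0$, every point lies in $C^\ssc_{[(\phi_\eps)_\eps]}$, and every infimum \eqref{gen_num_inv} vanishes (take $\theta=\xi/\eps$), so no choice of neighbourhoods can deliver your bound. (The same degeneracy is hidden in the paper's unproved assertion that the limit $\xi'=\lambda\xi_0$ has $\lambda>0$; it is eliminated only by the extra hypothesis $\nabla_x\phi(x,\theta)\neq 0$ for $\theta\neq 0$ imposed later in Theorem \ref{theo_micro_cla}.) Your probe-point device for $K=\emptyset$ needs that hypothesis as well, since a direction $\omega_1$ with $\nabla_y\phi(x_0,\omega_1)\neq 0$ need not exist. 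State the nondegeneracy of $\nabla_y\phi$ on the relevant set as an explicit assumption and your argument closes; without it the compactness step fails.
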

\begin{proof}
$(i)$ From the limit property we obtain that $\sup_{x\in K, |\xi|=1}|\nabla_\xi\phi_\eps(x,\xi)-\nabla_\xi\phi(x,\xi)|$ tends to $0$ for any compact subset $K$ of $\Om$. If for some $(x_0,\xi_0)$, $\nabla_\xi\phi(x_0,\xi_0)\neq 0$ then $|\nabla_\xi\phi(x,\xi)|\ge c$ on a relatively compact neighborhood $U(x_0)$ and on a conic neighborhood $\Gamma(\xi_0)$. Since for all $n\in\N$ there exists $\eps_n\in(0,1]$ such that
\[
\sup_{x\in U(x_0), \xi\in\Gamma(\xi_0)}|\nabla_\xi\phi_\eps(x,\xi)-\nabla_\xi\phi(x,\xi)|\le \frac{1}{n},
\]
for all $\eps\in(0,\eps_n)$, we get
\[
|\nabla_\xi\phi_\eps(x,\xi)|\ge |\nabla_\xi\phi(x,\xi)|-|\nabla_\xi\phi_\eps(x,\xi)-\nabla_\xi\phi(x,\xi)|\ge c-\frac{1}{n}
\]
for all $x\in U(x_0)$, $\xi\in\Gamma(\xi_0)$, $\eps\in(0,\eps_n)$. Choosing $n$ large enough we have that the net $$\inf_{x\in U(x_0), \xi\in\Gamma(\xi_0)}|\nabla_\xi\phi_\eps(x,\xi)|$$ is slow scale invertible, that is $(x_0,\xi_0)\not\in C^\ssc_{[(\phi_\eps)_\eps]}$.

$(ii)$ If $(x_0,\xi_0)\in W^{\ssc}_{[(\phi)_\eps],a}$ then we find sequences of neighborhoods $U_n(x_0)$, $\Gamma_n(\xi_0)$, and $V_n({\rm{cone\, supp}}\, a\cap C^\ssc_{[(\phi_\eps)_\eps]})$ and sequences of points $x_n\in U_n(x_0)$, $\xi_n\in\Gamma_n(\xi_0)$, $(x_n,\theta_n)\in V_n$ such that
\[
|\xi_n-\nabla_x\phi_{\eps_n}(x_n,\theta_n)|\le |\log \eps_n|^{-1}(|\xi_n|+|\theta_n|),
\]
where $\eps_n$ tends to $0$. It is not restrictive to assume that $\theta_n$ has norm $1$. Hence,
\beq
\label{ineq_lim}
|\xi_n-\nabla_x\phi_{\eps_n}(x_n,\theta_n)|\le |\log \eps_n|^{-1}(|\xi_n|+1).
\eeq
Passing to subsequences we have that $x_n$ converges to $x_0$ and $\theta_n$ to a certain $\theta_0$ with $|\theta_0|=1$ and $(x_0,\theta_0)\in{\rm{cone\, supp}}\, a\cap C^\ssc_{[(\phi_\eps)_\eps]}$. The first assertion of this proposition implies that $\nabla_\theta\phi(x_0,\theta_0)=0$. The sequence $|\xi_n|$ is bounded. Indeed,
\[
|\xi_n|\le |\xi_n-\nabla_x\phi_{\eps_n}(x_n,\theta_n)|+|\nabla_x\phi_{\eps_n}(x_n,\theta_n)|\le |\log \eps_n|^{-1}(|\xi_n|+1)+|\nabla_x\phi_{\eps_n}(x_n,\theta_n)|,
\]
where, from $\lim_{n\to\infty}|\nabla_x\phi_{\eps_n}(x_n,\theta_n)|=|\nabla_x\phi(x_0,\theta_0)|$ it follows that $|\nabla_x\phi_{\eps_n}(x_n,\theta_n)|\le 1$ for $n$ large enough. Thus,
\[
|\xi_n|(1-|\log \eps_n|^{-1})\le |\log \eps_n|^{-1}+1
\]
results in an upper bound for $|\xi_n|$. As a consequence, passing again to subsequences, we find some $\xi'=\lambda\xi_0$, $\lambda>0$, such that $\xi_n\to \xi'$. Passing to the limit in \eqref{ineq_lim} we deduce $\xi'=\lambda\xi_0=\nabla_x\phi(x_0,\theta_0)$. In conclusion, we have proved that if $(x_0,\xi_0)\in W^{\ssc}_{[(\phi)_\eps],a}$ then there exists $\theta_0'\neq 0$ such that $(x_0,\theta_0')\in {\rm{cone\, supp}}\, a$, $\nabla_\theta\phi(x_0,\theta_0')=0$ and $\xi_0=\nabla_x\phi(x_0,\theta_0')$.
\end{proof}
We are ready to consider a Fourier integral operator with phase function
\[
[(\phi_\eps(x,\xi))_\eps]-y\xi.
\]
From the previous statements we obtain the following microlocal result.
\begin{theorem}
\label{theo_micro_cla}
Let $[(\phi_\eps(x,\xi)-y\xi)_\eps]$ be a slow scale generalised phase function in the variable $(y,\xi)$ and $a(x,y,\xi)$ a regular generalised amplitude on $\Om'\times\Om\times\R^n$. Let $A$ be the corresponding Fourier integral operator with kernel $K_A\in\LL(\G_c(\Om'\times\Om),\wt{\C})$ and $u$ a generalised function in $\Gc(\Om)$. If there exists $\phi\in{\mathcal{C}}^1(\Om'\times\R^n\setminus 0)$ with $\nabla_x\phi(x,\xi)\neq 0$ for $\xi\neq 0$ such that $\phi_\eps\to \phi$ in ${\mathcal{C}}^1(\Om'\times\R^n\setminus 0)$ then
\begin{itemize}
\item[(i)] $\WF'_{\Ginf}K_A\circ\WF_{\Ginf} u\subseteq \CO{\Om'}$,
\item[(ii)] $\WF_{\Ginf}Au\subseteq\{(x,\nabla_x\phi(x,\theta)):\ (x,\nabla_\theta\phi(x,\theta),\theta)\in{\rm{cone\,supp}}\, a,\ (\nabla_\theta\phi(x,\theta),\theta)\in\WF_{\Ginf}u\}.$
\end{itemize}
\end{theorem}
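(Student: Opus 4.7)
The plan is to combine the kernel-level bound of Theorem~\ref{theo_GHO} with the composition principle of Theorem~\ref{theo_ChazP}, after turning the abstract set $W^{\ssc}$ into the classical critical-point parametrisation by means of Proposition~\ref{prop_phi_C1}. The hypotheses on $\phi_\eps$ --- $\mathcal{C}^1$-convergence to a limit $\phi$ with $\nabla_x\phi\ne 0$ --- are precisely what is needed to apply that proposition and to verify the compatibility condition \eqref{rel_cond} for free.

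First I would introduce the auxiliary phase $\Phi_\eps(x,y,\xi):=\phi_\eps(x,\xi)-y\xi$ on $(\Om'\times\Om)\times\R^n\setminus 0$, whose limit $\Phi(x,y,\xi)=\phi(x,\xi)-y\xi$ is $\mathcal{C}^1$, with gradients $\nabla_x\Phi=\nabla_x\phi(x,\xi)$, $\nabla_y\Phi=-\xi$, $\nabla_\xi\Phi=\nabla_\xi\phi(x,\xi)-y$. Since $|\nabla_y\Phi|^2=|\xi|^2$, the net $\inf_{|\xi|=1}|\nabla_{(x,y,\xi)}\Phi_\eps(x,y,\xi)|^2$ is slow scale strictly nonzero, so $[\Phi_\eps]$ is a slow scale generalised phase function on $(\Om'\times\Om)\times\R^n$ and $\Phi_\eps\to\Phi$ in $\mathcal{C}^1$ on the puncture. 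Applying Theorem~\ref{theo_GHO} to $K_A$ and Proposition~\ref{prop_phi_C1}(ii) to $\Phi$, the critical-point equation $\nabla_\xi\Phi=0$ becomes $y=\nabla_\xi\phi(x,\xi)$, yielding
\[
\WF_{\Ginf}K_A\subseteq\{((x,\nabla_\theta\phi(x,\theta)),(\nabla_x\phi(x,\theta),-\theta)):\theta\ne 0,\ (x,\nabla_\theta\phi(x,\theta),\theta)\in\mathrm{cone\,supp}\,a\},
\]
and, after the sign-flip convention $E\mapsto E'$ recalled in the excerpt,
\[
\WF'_{\Ginf}K_A\subseteq\{(x,\nabla_x\phi(x,\theta),\nabla_\theta\phi(x,\theta),\theta):\theta\ne 0,\ (x,\nabla_\theta\phi(x,\theta),\theta)\in\mathrm{cone\,supp}\,a\}.
\]

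For (i), every point of $\WF'_{\Ginf}K_A$ has first covector $\nabla_x\phi(x,\theta)$ with $\theta\ne 0$, which is nonzero by hypothesis; hence $\WF'_{\Ginf}K_A\circ\WF_{\Ginf}u\subseteq \CO{\Om'}$. This verifies \eqref{rel_cond}, so Theorem~\ref{theo_ChazP} gives
\[
\WF_{\Ginf}Au\subseteq(\WF'_{\Ginf}K_A\circ\WF_{\Ginf}u)\cup(\WF'_{\Ginf}K_A\circ(\singsupp_{\Ginf}u\times\{0\})).
\]
The second term is empty because every element of $\WF'_{\Ginf}K_A$ has last covector $\theta\ne 0$, while unfolding the first term through the explicit parametrisation above yields precisely the set claimed in (ii).

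The only step that requires genuine care is the application of Proposition~\ref{prop_phi_C1}(ii) to the auxiliary phase $\Phi$: one must verify both that $(\Phi_\eps)_\eps$ is a slow scale generalised phase function on the full product $(\Om'\times\Om)\times\R^n$ and that it converges in $\mathcal{C}^1$ on the punctured space. Both are immediate consequences of the assumptions on $(\phi_\eps)_\eps$ together with the smoothness of the linear term $y\xi$ and the pointwise lower bound $|\partial_y(y\xi)|=|\xi|$ on the sphere. Everything else is a routine bookkeeping exercise with the sign convention defining the twist $E\mapsto E'$ and the composition relation $\circ$.
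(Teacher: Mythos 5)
Your argument is correct and follows essentially the same route as the paper: apply Proposition \ref{prop_phi_C1} (via Theorem \ref{theo_GHO}) to the auxiliary phase $\phi_\eps(x,\xi)-y\xi$ to obtain the parametrised bound on $\WF_{\Ginf}K_A$, deduce (i) from $\nabla_x\phi\neq 0$, and then feed this into Theorem \ref{theo_ChazP}, noting that the term involving $\singsupp_{\Ginf}u\times\{0\}$ is empty. The paper's proof is just a terser version of exactly this, so no further comment is needed.
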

\begin{proof}
An application of Proposition \ref{prop_phi_C1} to the phase function $[(\phi_\eps(x,\xi)-y\xi)_\eps]$ yields
\beq
\label{formula_cla}
WF_{\Ginf}K_A\subseteq\{(x,y,\nabla_x\phi(x,\theta),-\theta):\ \theta\neq0,\ (x,y,\theta)\in{\rm{cone\,supp}}\, a,\ \nabla_\theta\phi(x,\theta)=y\},
\eeq
i.e.,
\[
WF_{\Ginf}K_A\subseteq\{(x,\nabla_\theta\phi(x,\theta),\nabla_x\phi(x,\theta),-\theta):\ \theta\neq0,\ (x,y,\theta)\in{\rm{cone\,supp}}\, a\}.
\]
From the hypothesis on $\nabla_x\phi$ we have that if $(x,\xi)\in\WF'_{\Ginf}K_A\circ\WF_{\Ginf} u$ then $\xi=\nabla_x\phi(x,\theta)$ for some $\theta\neq 0$. It follows that $\xi\neq 0$. From \eqref{formula_cla} and Theorem \ref{theo_ChazP} we easily obtain the inclusion $(ii)$.
\end{proof}
We now apply Theorem \ref{theo_micro_cla} to the case of a generalised Fourier integral operator solving a hyperbolic Cauchy problem as in Section \ref{sec_PDE}. We obtain the following microlocal result.
\begin{proposition}
\label{prop_micro_cla}
Let
\[
u(t,x)=\int_{\R^n}\esp^{i[(\phi_\eps(t,x,\eta))_\eps]}b(t,x,\eta)\widehat{u_0}(\eta)\, \dslash\eta,
\]
where $b$ is a regular generalised symbol of order $0$ and $[(\phi_\eps(t,x,\eta)-y\eta)_\eps]$ is a slow scale generalised phase function in the variable $(y,\eta)$. Assume that there exists $\phi\in{\mathcal{C}}^1(\R\times\R^n\times\R^n\setminus 0)$ with $\nabla_{(t,x)}\phi(t,x,\eta)\neq 0$ for $\eta\neq 0$ such that $\phi_\eps\to \phi$ in ${\mathcal{C}}^1(\R\times\R^n\times\R^n\setminus 0)$. Then
\[
\WF_{\Ginf}u\subseteq \{(t,x,\nabla_t\phi(t,x,\eta),\nabla_x\phi(t,x,\eta)):\ (t,x,\eta)\in {\rm{cone\,supp}}\, b,\ (\nabla_\eta\phi(t,x,\eta),\eta)\in\WF_{\Ginf}u_0\}.
\]
\end{proposition}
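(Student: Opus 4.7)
The plan is to recognize that the displayed formula expresses $u$ as the action of a generalized Fourier integral operator of the type considered in Theorem \ref{theo_micro_cla}, and to read off the wave front set inclusion directly. Put $\Om'=\R\times\R^n$ (with variable $(t,x)$) and $\Om=\R^n$ (with variable $y$), and define the amplitude $a(t,x,y,\eta):=b(t,x,\eta)$, viewed as a regular generalized amplitude on $\Om'\times\Om\times\R^n$ that happens to be constant in $y$. Writing $\widehat{u_0}(\eta)=\int\esp^{-iy\eta}u_0(y)\,dy$ under the integral sign recasts
\[
u(t,x)=\int_{\R^n\times\R^n}\esp^{i([(\phi_\eps(t,x,\eta))_\eps]-y\eta)}a(t,x,y,\eta)u_0(y)\,dy\,\dslash\eta,
\]
so that $u=Au_0$ where $A$ is exactly the class of operators treated in Theorem \ref{theo_micro_cla} (with output space $\Om'$ of dimension $n+1$).

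Next I would verify the hypotheses of Theorem \ref{theo_micro_cla}. By assumption $[(\phi_\eps(t,x,\eta)-y\eta)_\eps]$ is a slow scale generalized phase function in $(y,\eta)$, and $b$ is a regular generalized symbol of order $0$, so $a$ is a regular generalized amplitude. The classical limit condition transfers directly: the gradient of $\phi(t,x,\eta)-y\eta$ with respect to the output variables $(t,x)$ equals $\nabla_{(t,x)}\phi(t,x,\eta)$, which by hypothesis is nonzero for $\eta\neq 0$, while $\phi_\eps-y\eta\to\phi-y\eta$ in ${\mathcal{C}}^1(\Om'\times\R^n\times\R^n\setminus 0)$ since the subtracted term $y\eta$ is $\eps$-independent. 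Applying part $(ii)$ of Theorem \ref{theo_micro_cla} yields
\[
\WF_{\Ginf}u\subseteq\bigl\{((t,x),\nabla_{(t,x)}\phi(t,x,\eta))\colon\ ((t,x),\nabla_\eta\phi(t,x,\eta),\eta)\in{\rm{cone\,supp}}\,a,\ (\nabla_\eta\phi(t,x,\eta),\eta)\in\WF_{\Ginf}u_0\bigr\}.
\]

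The last step is to simplify the cone support condition. Since $a(t,x,y,\eta)=b(t,x,\eta)$ does not depend on $y$, the defining condition \eqref{cond_conic_supp} is satisfied at $(t,x,y_0,\eta)$ if and only if it is satisfied at $(t,x,\eta)$ for $b$; equivalently ${\rm{cone\,supp}}\,a={\rm{cone\,supp}}\,b\times\R^n$, the second factor corresponding to the free $y$-variable. Thus the membership $((t,x),\nabla_\eta\phi(t,x,\eta),\eta)\in{\rm{cone\,supp}}\,a$ reduces to $(t,x,\eta)\in{\rm{cone\,supp}}\,b$ with no constraint on the $y$-component $\nabla_\eta\phi$. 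Splitting $\nabla_{(t,x)}\phi$ into its time and space parts $(\nabla_t\phi,\nabla_x\phi)$ yields precisely the asserted inclusion. The proof is therefore a direct specialization of Theorem \ref{theo_micro_cla}; the only nontrivial point is the bookkeeping of the amplitude's cone support when the $y$-dependence is trivial, but this is routine from the definition.
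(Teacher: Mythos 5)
Your proposal is correct and follows exactly the route the paper intends: Proposition \ref{prop_micro_cla} is stated as a direct application of Theorem \ref{theo_micro_cla} with $\Om'=\R^{n+1}$, $\Om=\R^n$, and the $y$-independent amplitude $a(t,x,y,\eta)=b(t,x,\eta)$, and your verification of the hypotheses and the reduction of the cone support condition to $(t,x,\eta)\in{\rm{cone\,supp}}\,b$ supply precisely the bookkeeping the paper leaves implicit.
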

\begin{example}
\label{rem_ex_MO}
We demonstrate how Proposition \ref{prop_micro_cla} can be used in a simple example. It concerns a transport equation with discontinuous coefficients and distributional data taken from in \cite[Section 5]{O:07}. In detail, let
\begin{align*}
D_t u(t,x)&=-H(t-1)D_x u(t,x),\\
u(0,x)&=\delta(x),\\
\end{align*}
with $t\in[0,+\infty)$ and $x\in\R$. Here $H$ denotes the Heaviside function and $\delta$ the Dirac measure. Take $\rho\in\D(\R)$ with $\supp\rho\subseteq[-1,1]$, nonnegative, symmetric and with integral equal $1$. Let $(\omega^{-1}(\eps))_\eps$ be a slow scale net with $\lim_{\eps\to 0}\omega(\eps)=0$. The solution $u\in\G([0,+\infty)\times\R)$ of the Cauchy problem can be written, at the level of representatives, as
\[
u_\eps(t,x)=\int_\R\esp^{i(x-\Lambda_\eps(t))\eta}\widehat{\rho_\eps}(\eta)\, \dslash\eta,
\]
where
\[
\Lambda_\eps(t)=\begin{cases}
0, & 0\le t\le 1-\omega(\eps),\\
\int_{1-\omega(\eps)}^t\lambda_\eps(z)\, dz & 1-\omega(\eps)\le t\le 1+\omega(\eps),\\
t-1 & t\ge \omega(\eps)+1,
\end{cases}
\]
and
\[
\lambda_\eps(t)=\int_{-\infty}^{(t-1)/\omega(\eps)}\rho(z)\, dz.
\]
In particular $\Lambda_\eps(t)$ converges to
\[
\Lambda(t):=\begin{cases} 0, & 0\le t\le 1,\\
t-1, & t\ge 1,
\end{cases}
\]
for $\eps\to 0$. When $t\neq 1$ we are under the hypotheses of Proposition \ref{prop_micro_cla}. Indeed, $\nabla_{(t,x)}(x-\Lambda(t))\eta=(-\Lambda'(t)\eta,\eta)\neq 0$ for $\eta\neq 0$. It follows that
\[
\WF_{\Ginf} (u|_{0\le t<1})\subseteq\{(t,0,0,\eta):\, \eta\neq 0,\ 0\le t<1\}
\]
and
\[
\WF_{\Ginf} (u|_{t>1})\subseteq\{(t,t-1,-\eta,\eta):\, \eta\neq 0,\ t>1\}.
\]
The fact that $\Lambda$ is not differentiable at $t=1$ does not allow us to apply Proposition \ref{prop_micro_cla} in a neighborhood of $t=1$. The wavefront set at $t=1$ has been computed by direct calculations in \cite{O:07}.
\end{example}
Another class of generalised Fourier integral operators satisfying condition \eqref{rel_cond} can be found in solving a hyperbolic Cauchy problem where the principal part depends only on $t$. More precisely, we will deal with phase functions of the type
\[
[(x\eta+\varphi_\eps(t,\eta)-y\eta)_\eps],
\]
where $\varphi_\eps$ is homogeneous of order $1$ in $\eta$. Note that differently from Proposition \ref{prop_micro_cla} we do not require any convergence as $\eps\to 0$ but simply that $\varphi_\eps$ does not depend on $x$. Let us consider the Cauchy problem
\[
 D_t u =\sum_{j=1}^n a_{1,j}(t)D_j u+a_0(t,x)u,\qquad\quad u(0,\cdot)=u_0\in\Gc(\R^n).
\]
A typical example of $\varphi=[(\varphi_\eps)_\eps]$ is given by
\[
\varphi(t,\eta)=\sum_{j=1}^n\biggl(\int_0^t a_{1,j}(\sigma)\, d\sigma\biggr) \eta_j,
\]
where $\gamma_j(x,t,s)=x_j+\int_s^t a_{1,j}(\sigma)\, d\sigma$ are the components of the characteristic curves corresponding to the operator $D_t-\sum_{j=1}^n a_{1,j}(t)D_j$.
\begin{proposition}
\label{prop_micro_cla_2}
Let
\[
u(t,x)=\int_{\R^n}\esp^{i[(x\eta+\varphi_\eps(t,\eta))_\eps]}b(t,x,\eta)\widehat{u_0}(\eta)\, \dslash\eta:=Bu_0,
\]
where $b$ is a regular generalised symbol of order $0$ and $[(x\eta+\varphi_\eps(t,\eta)-y\eta)_\eps]$ is a slow scale generalised phase function in the variable $(y,\eta)$. If the net $(\sup_{|t|\le T,\eta\neq 0}\partial_t\varphi_\eps(t,\frac{\eta}{|\eta|}))_\eps$ is bounded for all $T>0$ then
\beq
\label{micro_cla_2}
\WF_{\Ginf}u\subseteq  \big(\WF'_{\Ginf}K_B\circ\WF_{\Ginf} u_0\big) \cup \big(\WF'_{\Ginf}K_B\circ(\singsupp_{\Ginf} u_0\times\{0\})\big).
\eeq
\end{proposition}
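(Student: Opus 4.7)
The plan is to derive \eqref{micro_cla_2} as a direct application of Theorem \ref{theo_ChazP} to $B$, viewed as a Fourier integral operator with $\Om'=\R\times\R^n$, $\Om=\R^n$, phase function $\phi((t,x),y,\eta)=x\eta+\varphi(t,\eta)-y\eta$ and amplitude $b(t,x,\eta)$. The slow scale regularity of $\phi$ in $(y,\eta)$ and the regularity of $b$ are granted by hypothesis, so the only non-trivial thing to check is the composition condition \eqref{rel_cond}; once \eqref{rel_cond} is in place, \eqref{micro_cla_2} is immediate from \eqref{chain_ChazP}.

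Unwinding the definition of $\WF'_{\Ginf}K_B\circ\WF_{\Ginf}u_0$, verifying \eqref{rel_cond} reduces to the following claim: whenever $((t_0,x_0),y_0,\tau_0,\sigma_0,-\eta_0)\in\WF_{\Ginf}K_B$ and $(y_0,\eta_0)\in\WF_{\Ginf}u_0$, so that in particular $\eta_0\neq 0$, one has $(\tau_0,\sigma_0)\neq 0$; here $(\tau,\sigma,-\eta)$ denotes the cotangent variable dual to $(t,x,y)$. My first move is to invoke Theorem \ref{theo_GHO} to enclose $\WF_{\Ginf}K_B$ in the set $W^{\ssc}_{\phi,b}$, which reduces the problem to a statement about $W^{\ssc}_{\phi,b}$ alone.

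The main part of the argument then parallels the sequential extraction carried out in the proof of Proposition \ref{prop_phi_C1}(ii). The failure of slow scale invertibility of the number \eqref{gen_num_inv} at the point in question, combined with the normalisation $|\theta_n|=1$, produces sequences $\eps_n\to 0$, $(t_n,x_n,y_n)\to(t_0,x_0,y_0)$, vectors $(\tau_n,\sigma_n,-\eta_n)$ in an arbitrary conic neighborhood of $(\tau_0,\sigma_0,-\eta_0)$ and $\theta_n$ in a conic neighborhood of ${\rm{cone\, supp}}\,b\cap C^{\ssc}_\phi$ satisfying
\[
\bigl|(\tau_n,\sigma_n,-\eta_n)-(\partial_t\varphi_{\eps_n}(t_n,\theta_n),\theta_n,-\theta_n)\bigr|\le |\log\eps_n|^{-1}\bigl(|(\tau_n,\sigma_n,\eta_n)|+1\bigr).
\]

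The hard part, and the one place where the new hypothesis of the proposition enters, is keeping $|(\tau_n,\sigma_n,\eta_n)|$ bounded in the limit; without some control on $\partial_t\varphi_\eps$ the $\tau$-component could in principle grow faster than $|\theta_n|$ and destroy the extraction. This is where I would use the assumption: homogeneity of degree $1$ of $\varphi_\eps$ in $\eta$ yields $|\partial_t\varphi_{\eps_n}(t_n,\theta_n)|=|\partial_t\varphi_{\eps_n}(t_n,\theta_n/|\theta_n|)|$, which stays bounded as $n\to\infty$ by hypothesis. Therefore $|\nabla_z\phi_{\eps_n}(z_n,\theta_n)|=O(1)$, and the displayed inequality forces a uniform bound on $|(\tau_n,\sigma_n,\eta_n)|$. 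Extracting further subsequences, $\theta_n\to\theta_0$ with $|\theta_0|=1$ and $(\tau_n,\sigma_n,\eta_n)$ converges to a positive scalar multiple of $(\tau_0,\sigma_0,\eta_0)$; the same inequality then forces $\sigma_n\to\theta_0$, so $\sigma_0$ is proportional to $\theta_0\neq 0$ and in particular $(\tau_0,\sigma_0)\neq 0$, as required. This establishes \eqref{rel_cond}, and the proposition follows from Theorem \ref{theo_ChazP}.
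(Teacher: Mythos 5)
Your proposal is correct and follows essentially the same route as the paper: both reduce the statement to checking condition \eqref{rel_cond} for Theorem \ref{theo_ChazP}, and both verify it by combining Theorem \ref{theo_GHO} with a sequential extraction (as in Proposition \ref{prop_phi_C1}(ii)), using the boundedness of $(\sup_{|t|\le T,\eta\neq 0}\partial_t\varphi_\eps(t,\eta/|\eta|))_\eps$ to bound $|(\tau_n,\sigma_n,\eta_n)|$ and conclude that the spatial covector is a nonzero multiple of $\theta_0$. The only cosmetic difference is that the paper states the slightly stronger intermediate conclusion $\xi_0=-\eta_0$ for every point of $\WF_{\Ginf}K_B$, which is exactly what your limit argument yields.
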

\begin{proof}
The microlocal result \eqref{micro_cla_2} is directly obtained from Theorem \ref{theo_ChazP} by checking that the inclusion
\[
\WF'_{\Ginf}K_B\circ \WF_{\Ginf}u_0\subseteq\CO{\R\times\R^n}
\]
holds. We will prove something stronger. We will prove that if $(t_0,x_0,y_0,\tau_0,\xi_0,\eta_0)\in\WF_{\Ginf}K_B$ then $\xi_0=-\eta_0$. Thus $(t_0,x_0,y_0,0,0,\eta_0)\not\in\WF_{\Ginf}K_B$ if $\eta_0\neq 0$.

By Theorem \ref{theo_GHO}, for the point $(t_0,x_0,y_0,\tau_0,\xi_0,\eta_0)\in\WF_{\Ginf}K_B$ we can find sequences of neighborhoods $U_n(t_0,x_0,y_0)$, $\Gamma_n(\tau_0,\xi_0,\eta_0)$ and sequences of points $(t_n,x_n,y_n)\in U_n(t_0,x_0,y_0)$, $(\tau_n,\xi_n,\eta_n)\in \Gamma_n$ and $\theta_n$ such that
\[
|(\tau_n,\xi_n,\eta_n)-(\partial_t\varphi_{\eps_n}(t_n,\theta_n),\theta_n,-\theta_n)|\le |\log \eps_n|^{-1}(|(\tau_n,\xi_n,\eta_n)|+|\theta_n|),
\]
where $\eps_n$ tends to $0$. Again we may assume that $\theta_n$ has norm $1$. Hence,
\[
|(\tau_n,\xi_n,\eta_n)-(\partial_t\varphi_{\eps_n}(t_n,\theta_n),\theta_n,-\theta_n)|\le |\log \eps_n|^{-1}(|(\tau_n,\xi_n,\eta_n)|+1).
\]
From the hypothesis on $\partial_t\varphi_\eps$ it follows that
\[
|(\tau_n,\xi_n,\eta_n)|\le|\log \eps_n|^{-1}(|(\tau_n,\xi_n,\eta_n)|+1)+|\partial_t\varphi_{\eps_n}(t_n,\theta_n),\theta_n,-\theta_n)| \le |\log \eps_n|^{-1}|(\tau_n,\xi_n,\eta_n)|+c,
\]
which means that the sequence $|(\tau_n,\xi_n,\eta_n)|$ is bounded. Passing to subsequences we have that $(\tau_n,\xi_n,\eta_n)$ converges to $\lambda(\tau_0,\xi_0,\eta_0)$ and $\theta_n$ to some $\theta_0\neq 0$. Finally,
\[
|(\xi_n,\eta_n)-(\theta_n,-\theta_n)|\le |(\tau_n,\xi_n,\eta_n)-(\partial_t\varphi_{\eps_n}(t_n,\theta_n),\theta_n,-\theta_n)|\le |\log \eps_n|^{-1}(|(\tau_n,\xi_n,\eta_n)|+1)
\]
yields
\[
(\lambda\xi_0,\lambda\eta_0)=(\theta_0,-\theta_0),
\]
i.e., $\xi_0=-\eta_0$.
\end{proof}
\subsection{The Hamiltonian flow and the $\Ginf$-wave front set at fixed time}
We now fix $t$ and investigate the microlocal properties of the solution $u(t,\cdot)$ of the Cauchy problem \eqref{first_order_equ}. As in the previous subsection we begin with some theoretical results on generalised Fourier integral operators of the type
\[
F_\phi(b)u(x)=\int_{\R^n}\esp^{i\phi(x,\eta)}b(x,\eta)\widehat{u}(\eta)\, \dslash\eta
\]
and we then consider the specific case of a strictly hyperbolic Cauchy problem. The following theorem is modelled on \cite[Theorem 4.1.6]{MR:97} and makes use of the composition formula for generalised operators in \cite{Garetto:ISAAC07}.
\begin{theorem}
\label{theo_MascaRo}
Let $\Om$ be an open subset of $\R^n$, $\phi$ a slow scale regular generalised symbol on $\Om_x\times\R^n_\eta$ homogeneous of order $1$ in $\eta$, $b$ a slow scale regular generalised symbol of order $0$ on $\Om\times\R^n$ compactly supported in $x$ and $u\in\Gc(\Om)$. Let $(x_0,\xi_0)\in\CO{\Om}$ and $(y_0,\eta_0)\not\in\WF_{\Ginf}u$. If there exist
\begin{itemize}
\item[-] a representative $(\phi_\eps)_\eps$ of $\phi$ such that $\nabla_x\phi_\eps\neq 0$ and
$$\big(\inf_{x\in K,\eta\in\R^n\setminus 0}\big|\nabla_x\phi_\eps(x,\frac{\eta}{|\eta|})\big|\big)_\eps$$
is slow scale strictly non-zero for all $K\Subset\Om$,
\item[-] conic neighborhoods $W$ and $Z$ of $(y_0,\eta_0)$ and $(x_0,\xi_0)$, respectively, with $W\subset(\WF_{\Ginf}u)^{\rm{c}}$,
\item[-] $\eps_0\in(0,1]$ and a slow scale net $(\lambda_\eps)_\eps$
\end{itemize}
such that the following implication holds
\beq
\label{impl_imp}
(x,\nabla_x\phi_\eps(x,\eta))\in Z\quad \Rightarrow \quad |y-\nabla_\eta\phi_\eps(x,\eta)|\ge \lambda^{-1}_\eps\quad \text{for all $\eps\in(0,\eps_0]$ and all $(y,\eta)\not\in W$},
\eeq
then
\beq
\label{impl_imp_1}
(x_0,\xi_0)\not\in\WF_{\Ginf}(F_\phi(b)u).
\eeq
\end{theorem}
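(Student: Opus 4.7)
The plan is to establish $(x_0,\xi_0) \notin \WF_{\Ginf}(F_\phi(b)u)$ via the pseudodifferential characterisation of the wave front set: I will produce a properly supported classical pseudodifferential operator $A=a(x,D)\in\Psi^0(\Om)$ with $(x_0,\xi_0)\notin\Char(A)$ and $AF_\phi(b)u\in\Ginf(\Om)$. The symbol $a$ is chosen standard (independent of $\eps$), properly supported, homogeneous of degree zero outside a neighborhood of $0$, with $a(x_0,\xi_0)=1$ and essential support in a small conic shrinking $Z_1\Subset Z$ of $(x_0,\xi_0)$. The hypotheses on $\phi$ are precisely those required by the composition formula for $a(x,D)F_\omega(b)$ from the preceding subsection, and applying it with $\omega=\phi$ yields
\[
a(x,D)F_\phi(b)=F_\phi(h)+r(x,D),
\]
where $h\in\G^\ssc_{S^0(\Om\times\R^n\setminus 0)}$ has leading term $a(x,\nabla_x\phi(x,\eta))b(x,\eta)$ and $r$ is slow-scale regular of order $-\infty$. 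Then $r(x,D)u\in\Ginf(\Om)$ automatically, and each representative $h_\eps$ is supported in $\{(x,\eta)\colon(x,\nabla_x\phi_\eps(x,\eta))\in\overline{Z_1}\}$, in particular inside $\{(x,\eta)\colon(x,\nabla_x\phi_\eps(x,\eta))\in Z\}$.

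Next I would microlocally split the data: pick a properly supported classical $\alpha(x,D)\in\Psi^0(\Om)$ elliptic at $(y_0,\eta_0)$ whose essential support lies in $W$, and write $u=\alpha u+(1-\alpha)u$. Since $\mathrm{ess\,supp}\,\alpha\cap\WF_{\Ginf}u\subseteq W\cap\WF_{\Ginf}u=\emptyset$, Remark~\ref{rem_pseudo} gives $\WF_{\Ginf}(\alpha u)=\emptyset$, so $\alpha u\in\Gcinf(\Om)$ and $F_\phi(h)(\alpha u)\in\Ginf(\Om)$ by the $\Gcinf\to\Ginf$ continuity of slow-scale FIOs. It remains to show $F_\phi(h)v\in\Ginf$ near $x_0$, where $v:=(1-\alpha)u\in\Gc(\Om)$ has $\WF_{\Ginf}v\subseteq W^c$. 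For this I would appeal to Theorem~\ref{theo_ChazP}: by Theorem~\ref{theo_GHO} applied to the kernel of $F_\phi(h)$ (an oscillatory integral with phase $\phi(x,\eta)-y\eta$ and amplitude $h(x,\eta)$), membership of $(x_0,y,\xi_0,-\eta)$ in $\WF_{\Ginf}K_{F_\phi(h)}$ forces, via the $\nabla_x$- and $\nabla_\eta$-stationary conditions, the relations $\xi_0=\nabla_x\phi(x_0,\eta)$ and $y=\nabla_\eta\phi(x_0,\eta)$ together with $(x_0,\eta)\in\mathrm{cone\,supp}\,h$. That last condition forces $(x_0,\nabla_x\phi_\eps(x_0,\eta))\in\overline{Z_1}\subseteq Z$ for small $\eps$, and, if additionally $(y,\eta)\in W^c$, hypothesis~\eqref{impl_imp} provides the slow-scale-invertible lower bound $|y-\nabla_\eta\phi_\eps(x_0,\eta)|\geq\lambda_\eps^{-1}$, contradicting the equality $y=\nabla_\eta\phi(x_0,\eta)$ in $\wt{\C}$. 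Thus $(x_0,\xi_0)\notin\WF'_{\Ginf}K_{F_\phi(h)}\circ\WF_{\Ginf}v$; the composition with $\singsupp_{\Ginf}v\times\{0\}$ is excluded analogously (the case $\eta=0$ being ruled out by the conic-support definition), and the side-condition $\WF'_{\Ginf}K_{F_\phi(h)}\circ\WF_{\Ginf}v\subseteq\CO{\Om}$ needed to apply Theorem~\ref{theo_ChazP} is verified in the same way. Its conclusion then delivers $(x_0,\xi_0)\notin\WF_{\Ginf}(F_\phi(h)v)$, as required.

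The main obstacle is the rigorous translation of the pointwise lower bound \eqref{impl_imp} into slow-scale non-membership in $W^\ssc_{\phi',h}$ for the phase $\phi'(x,y,\eta)=\phi(x,\eta)-y\eta$: on suitably chosen conic neighborhoods (compatible with $Z_1$ and $W$) the infimum of $|\xi-\nabla_y\phi'_\eps|/(|\xi|+|\theta|)$ appearing in the definition of $W^\ssc$ must be shown to be genuinely slow-scale invertible, not merely strictly positive. This reduces to a parameter-dependent refinement of Proposition~\ref{prop_phi_C1} in which the $\Cinf$-convergence hypothesis is replaced by the quantitative slow-scale bound of~\eqref{impl_imp}, and once it is in place the remainder of the argument is a direct application of the microlocal machinery of Theorems~\ref{theo_GHO} and~\ref{theo_ChazP}.
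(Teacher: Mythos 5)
Your opening moves coincide with the paper's: a microlocal cut-off of the data near $(y_0,\eta_0)$ (your $\alpha(y,D)$ is the paper's $c(y,D)$, and the piece $F_\phi(b)\alpha(y,D)u$ is disposed of exactly as you say, via $\alpha(y,D)u\in\Gcinf$ and the mapping properties of slow scale FIOs), followed by composition with a classical $a(x,D)$ supported in $Z$ so that the resulting amplitude vanishes where $(x,\nabla_x\phi_\eps(x,\eta))\notin Z$. The divergence, and the gap, is in how you finish. The paper at this point does \emph{not} invoke Theorems \ref{theo_GHO} and \ref{theo_ChazP}: it keeps both localisations inside one kernel, namely that of ${\,}^t e(y,D)\,{\,}^t F_\phi(b_1)$ with $e=1-c$, so that on the support of the integrand one has simultaneously $(x,\nabla_x\phi_\eps(x,\eta))\in Z$ and $(y,\eta)\notin W$; hypothesis \eqref{impl_imp} then applies \emph{pointwise} and a direct integration by parts in $\eta$, dividing by $|\nabla_\eta\phi_\eps(x,\eta)-y|^{2N}\ge\lambda_\eps^{-2N}$, shows the kernel is in $\Ginf(\Om\times\Om)$ because all nets involved (amplitude, phase, $\lambda_\eps$) are of slow scale type. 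That is the entire analytic content of the theorem, and it is elementary once the two cut-offs sit in the same integral.

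Your route instead separates the two localisations ($Z$ lives in the amplitude $h$, $W^{\rm c}$ lives in $\WF_{\Ginf}v$) and asks the abstract calculus to recombine them. This is where the argument breaks down as written. First, the relations $\xi_0=\nabla_x\phi(x_0,\eta)$ and $y=\nabla_\eta\phi(x_0,\eta)$ that you extract from membership in $\WF_{\Ginf}K_{F_\phi(h)}$ are only available through Proposition \ref{prop_phi_C1}, whose hypothesis is $C^1$-convergence of $\phi_\eps$ to a classical limit --- an assumption Theorem \ref{theo_MascaRo} does not make (and which fails in the intended applications, where only the slow scale bound \eqref{impl_imp} is available). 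Without a limit there is no classical equation to contradict; what Theorem \ref{theo_GHO} gives is non-slow-scale-invertibility of an infimum over shrinking neighbourhoods, and converting \eqref{impl_imp} (a lower bound valid only at points where $(x,\nabla_x\phi_\eps(x,\eta))\in Z$, a condition that is itself $\eps$-dependent) into a statement about $W^{\ssc}_{\phi',h}$ and about $\mathrm{cone\,supp}\,h\cap C^{\ssc}_{\phi'}$ is precisely the crux you label ``the main obstacle'' and leave unproved. Second, the term $\WF'_{\Ginf}K_{F_\phi(h)}\circ(\singsupp_{\Ginf}v\times\{0\})$ in \eqref{chain_ChazP} and the side condition \eqref{rel_cond} are not ``ruled out by the conic-support definition'': excluding frequency points of the form $(\xi_0,0)$ from $\WF_{\Ginf}K_{F_\phi(h)}$ requires a quantitative argument of the kind carried out in Proposition \ref{prop_micro_cla_2}, and for a slow scale phase the bound $|\nabla_x\phi_\eps(x,\theta)|\le s_\eps|\theta|$ is not uniform in $\eps$, so this is not automatic either. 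In short, the skeleton is right and matches the paper up to the last step, but the decisive estimate is deferred to an unproven lemma; the paper's direct integration-by-parts on the doubly localised kernel is the missing ingredient.
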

\begin{proof}
By definition of the $\Ginf$-wavefront set we can find a conic neighborhood $W_1$ of $(y_0,\eta_0)$ such that $W\subset W_1\subset (\WF_{\Ginf}u)^{\rm{c}}$. From Lemma 3.4 in \cite{GH:05} we can find a symbol $c\in S^0(\Om\times\R^{n})$ with $\musupp\, c \subseteq W_1$ and such that $c(x,\eta)=1$ on $W$ when $|\eta|\ge 1$. In addition, we can construct $c$ so that $\supp_x c\Subset\Om$. From Theorem 3.6 in \cite{GH:05} we have
$\WF_{\Ginf} c(x,D)u\subseteq \WF_{\Ginf}u\cap \musupp\, c=\emptyset$. It follows that $c(x,D)u\in\Gcinf(\Om)$. We now write
\[
F_\phi(b)u = F_\phi(b)c(y,D)u+F_\phi(b)e(y,D)u,
\]
where $e(y,\eta)=1-c(y,\eta)$. Since $c(y,D)u\in\Gcinf(\Om)$ we obtain from the mapping properties of a generalised Fourier integral operator that $F_\phi(b)c(y,D)u$ is $\Ginf$-regular as well. This means
\[
\WF_{\Ginf}F_\phi(b)u =\WF_{\Ginf}F_\phi(b)e(y,D)u.
\]
We will therefore prove that the implication
\beq
\label{impl_imp_2}
(y_0,\eta_0)\not\in\WF_{\Ginf}(u)\qquad\Rightarrow\qquad (x_0,\xi_0)\not\in\WF_{\Ginf}(F_\phi(b)e(y,D)u)
\eeq
is true. Making use of the characterisation of the $\Ginf$-wave front set stated in \cite[Theorem 3.11]{GH:05} it is sufficient to prove that for some pseudodifferential operator $a(x,D)$ with $a\in S^0(\Om\times\R^n)$ such that $\supp\,a\subseteq Z$ and $a(x,\xi)=1$ on some smaller neighborhood $Z_1$ when $|\xi|\ge 1$, we have $a(x,D)F_\phi(b)(e(y,D)u)\in\Ginf(\Om)$. We are under the hypotheses of the composition theorem in \cite{Garetto:ISAAC07} (Theorem 5.11 in \cite{Garetto:ISAAC07} without dependence in $t$). Hence, modulo a $\Ginf$-regularising operator which does not affect the wavefront set, we have that the operator $a(x,D)F_\phi(b)(e(y,D)u)$ is the generalised Fourier integral operator
\[
F_\phi(b_1)((e(y,D)u))(x)=\int_{\R^n}\esp^{i\phi(x,\eta)}b_1(x,\eta)\int_{\Om}\esp^{-iy\eta}e(y,D)u(y)\, dy\, \dslash\eta,
\]
where $b_1\in\G^\ssc_{S^{0}(\Om\times\R^n)}$ has the following asymptotic expansion:
\[
\sum_\alpha\frac{\partial^\alpha_\xi a(x,\nabla_x\phi(x,\eta))}{\alpha!}D^\alpha_z\big(\esp^{i\overline{\phi}(z,x,\eta)}b(z,\eta)\big)|_{z=x}.
\]
At the level of representatives it means
\[
b_{1,\eps}(x,\eta)\sim\sum_\alpha\frac{\partial^\alpha_\xi a(x,\nabla_x\phi_\eps(x,\eta))}{\alpha!}D^\alpha_z\big(\esp^{i\overline{\phi_\eps}(z,x,\eta)}b_\eps(z,\eta)\big)|_{z=x}
\]
and since we argue modulo $\Ginf$ we can consider $b_{1,\eps}$ as a sum of a convergent series obtained from the asymptotic expansion above as in \cite[Theorem 2.2]{Garetto:ISAAC07}. Hence $b_{1,\eps}(x,\eta)=0$ if $(x,\nabla_x\phi_\eps(x,\eta))\not\in Z$ and $\eps\in(0,\eps_0]$. In the sequel we will complete the proof by showing that $F_\phi(b_1)e(y,D)$ is an integral operator with kernel in $\Ginf(\Om\times\Om)$. This will follow from the fact that the transposed operator ${\,}^t e(y,D){\,}^t F_{\phi}(b_1)$ has kernel in $\Ginf(\Om\times\Om)$. Working at the level of representatives we have that the kernel of ${\,}^t e(y,D){\,}^t F_{\phi}(b_1)$ is given by
\beq
\label{int_kern}
v\to\int_{\Om\times\Om\times\R^{n}}\esp^{i(\phi_\eps(x,\eta)-y\eta)}\big({\,}^t e\big)(y,-\eta)b_{1,\eps}(x,\eta)v(x,y)\, dx\, dy\, \dslash\eta,
\eeq
where ${\,}^t e(y,\eta)\sim\sum_{\alpha}(\alpha !)^{-1} D^\alpha_\eta D^\alpha_y e(y,-\eta)$. Again arguing modulo $\Ginf$ it is not a restriction to assume that ${\,}^t e(y,-\eta)=0$ when $(y,\eta)\in W$. It follows that in the integral \eqref{int_kern} we may assume that $(x,\nabla_x\phi_\eps(x,\eta))\in Z$ and $(y,\eta)\not\in W$ for $\eps\in(0,\eps_0]$. Thus, by integrating by parts we have
\begin{multline*}
\int_{\Om\times\Om\times\R^{n}}\esp^{i(\phi_\eps(x,\eta)-y\eta)}\big({\,}^t e\big)(y,-\eta)b_{1,\eps}(x,\eta)v(x,y)\, dx\, dy\, \dslash\eta\\
=\int_{\Om\times\Om\times\R^{n}}\esp^{i(\phi_\eps(x,\eta)-y\eta)}\frac{\Delta^N_\eta\big({\,}^t e(y,-\eta)b_{1,\eps}(x,\eta)\big)}{|\nabla_x\phi_\eps(x,\eta)-y|^{2N}}\dslash\eta\, v(x,y)\, dx\, dy,
\end{multline*}
where we can make use of the hypothesis \eqref{impl_imp}. Since the nets involved in the integral
\beq
\label{int_eta}
\int_{\R^n}\esp^{i(\phi_\eps(x,\eta)-y\eta)}\frac{\Delta^N_\eta\big({\,}^t e(y,-\eta)b_{1,\eps}(x,\eta)\big)}{|\nabla_x\phi_\eps(x,\eta)-y|^{2N}}\dslash\eta
\eeq
are of slow scale type and ${|\nabla_x\phi_\eps(x,\eta)-y|^{2N}}\ge \lambda_\eps^{-2N}$ where $(\lambda_\eps)_\eps$ is a slow scale net as well, we conclude that \eqref{int_eta} generates a generalised function in $\Ginf(\Om\times\Om)$.
\end{proof}
Theorem \ref{theo_MascaRo} applies to the solution
\[
u(t,x)=F_\phi(b)(u_0)(t,x)=\int_{\R^n}\esp^{i\phi(t,x,\eta)}b(t,x)\widehat{u_0}(\eta)\, \dslash\eta
\]
(see Proposition \ref{prop_FIO_formula}) of the Cauchy problem \eqref{first_order_equ} under a certain convergence assumption on the Hamiltonian flow.
\begin{theorem}
\label{theo_claudia_flow}
Let $u\in\G(\R^{n+1})$ be the unique solution of the Cauchy problem
\[
\begin{split}
 D_t u &=\sum_{j=1}^n a_{1,j}(t,x)D_j u+a_0(t,x)u, \\
 u(0,\cdot)&=u_0\in\Gc(\R^n).
\end{split}
\]
Under the hypotheses $(h2)$ on the coefficients $a_{1,j}$, $j=1,...,n$, and $(i2)$ on the coefficients $a_0$, let $\phi(t,x,\eta)=\sum_{h=1}^n\gamma_h(x,t,0)\eta_h$ be the solution of the eikonal equation \eqref{eikonal_0} and $b(t,x)\in\Ginf(\R^{n+1})$ as in \eqref{FIO_formula}. Let $\phi$ have a representative $(\phi_\eps)_\eps$ satisfying the following two conditions for any $t\in\R$:
\begin{itemize}
\item[(i)] $\nabla_x\phi_\eps(t,x,\eta)\neq 0$ for all $\eps,x$ and $\eta\neq 0$ and the net
\[
\inf_{x\in K\Subset\R^n, \eta\neq 0}|\nabla_x\phi_\eps\big(t,x,\frac{\eta}{|\eta|}\big)|
\]
is slow scale strictly non-zero;
\item[(ii)] the Jacobian $(\partial_j\gamma_{i,\eps}(x,t,0))_{i,j}$ is invertible and the solution $(y,\eta)=\chi_{t,\eps}(x,\xi)$ of the system
\begin{align*}
\xi&=\nabla_x\phi_\eps(t,x,\eta),\\
y&=\nabla_\eta\phi_\eps(t,x,\eta),
\end{align*}
converges to a limit $\chi_t$ in $\mathcal{C}(\CO{\R^n},\CO{\R^n})$ as $\eps\to0$;
\item[(iii)] $\chi_t$ defines a bijection on $\CO{\R^n}$.
\end{itemize}
Then
\[
\WF_{\Ginf}u(t,\cdot)\subseteq\chi_t^{-1}(\WF_{\Ginf}u_0).
\]
\end{theorem}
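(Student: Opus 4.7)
The strategy is to fix $t\in\R$ and apply Theorem~\ref{theo_MascaRo} to the generalised Fourier integral operator $F_\phi(b)(t,\cdot)$ acting on $u_0$. By Proposition~\ref{prop_FIO_formula}(ii) together with Theorem~\ref{theo_transport_first}(ii), the hypotheses $(h2)$, $(i2)$ guarantee that $\phi(t,\cdot,\cdot)$ is a slow scale generalised symbol of order $1$, homogeneous in $\eta$, and that $b(t,\cdot)$ is slow scale regular; Theorem~\ref{theo_first_order_equ}(ii) yields its compact $x$-support. The first bullet of Theorem~\ref{theo_MascaRo} is exactly hypothesis $(i)$. Thus the only non-trivial input is the verification of implication \eqref{impl_imp}.

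Let $(x_0,\xi_0)\in\CO{\R^n}$ with $(x_0,\xi_0)\notin\chi_t^{-1}(\WF_{\Ginf}u_0)$, and set $(y_0,\eta_0)=\chi_t(x_0,\xi_0)$, so that $(y_0,\eta_0)\notin\WF_{\Ginf}u_0$. Since $\phi(t,x,\eta)=\sum_h\gamma_h(x,t,0)\eta_h$ is linear in $\eta$, one has
\[
\nabla_x\phi_\eps(t,x,\eta)=A_\eps(x)\eta,\qquad \nabla_\eta\phi_\eps(t,x,\eta)=\gamma_\eps(x,t,0),
\]
with $A_\eps(x):=(\partial_j\gamma_{h,\eps}(x,t,0))_{h,j}$; in particular $\nabla_\eta\phi_\eps$ is \emph{independent of $\eta$}. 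Hypothesis $(ii)$ says precisely that $\chi_{t,\eps}(x,A_\eps(x)\eta)=(\gamma_\eps(x,t,0),\eta)$ converges in $\mathcal{C}(\CO{\R^n},\CO{\R^n})$ to the map $\chi_t$, which by $(iii)$ is a bijection.

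To produce the neighborhoods required in \eqref{impl_imp}, pick a conic open neighborhood $W=U\times\Gamma$ of $(y_0,\eta_0)$ with $U$ relatively compact and $W\cap\WF_{\Ginf}u_0=\emptyset$, and then a smaller conic $W'=U'\times\Gamma'$ with $(y_0,\eta_0)\in W'$, $\overline{U'}\subset U$ and $\overline{\Gamma'}\cap S^{n-1}\subset\Gamma$. By the continuity of $\chi_t$ and the local uniform convergence $\chi_{t,\eps}\to\chi_t$ on a compact neighborhood of $(x_0,\xi_0/|\xi_0|)$, there exist a conic open neighborhood $Z=V\times\Lambda$ of $(x_0,\xi_0)$ and $\eps_0\in(0,1]$ such that $\chi_{t,\eps}(Z)\subset W'$ for every $\eps\in(0,\eps_0]$; the homogeneity of $\chi_{t,\eps}$ in the cotangent variable extends this from the cosphere to the full cone. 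Consequently, whenever $(x,\nabla_x\phi_\eps(t,x,\eta))\in Z$ and $\eps\le\eps_0$, the point $(\nabla_\eta\phi_\eps(t,x,\eta),\eta)=\chi_{t,\eps}(x,A_\eps(x)\eta)$ lies in $W'$, which forces $\eta\in\Gamma'\subset\Gamma$ and $\nabla_\eta\phi_\eps(t,x,\eta)\in U'$. For any $(y,\eta)\notin W$ one therefore has $y\notin U$, whence
\[
|y-\nabla_\eta\phi_\eps(t,x,\eta)|\ge\mathrm{dist}(U',\R^n\setminus U)=:\delta>0.
\]
Taking $\lambda_\eps\equiv\delta^{-1}$ (a constant, hence slow scale, net) gives \eqref{impl_imp}, so Theorem~\ref{theo_MascaRo} yields $(x_0,\xi_0)\notin\WF_{\Ginf}F_\phi(b)(t,\cdot)u_0=\WF_{\Ginf}u(t,\cdot)$. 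Since $(x_0,\xi_0)$ was arbitrary outside $\chi_t^{-1}(\WF_{\Ginf}u_0)$, the claimed inclusion follows.

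The main obstacle is reconciling the cone structure demanded by \eqref{impl_imp} with the merely $\mathcal{C}$-topology convergence granted by hypothesis $(ii)$: a priori uniform convergence on compacta of $\CO{\R^n}$ is weaker than uniform convergence on cones. The linearity of $\phi$ in $\eta$ rescues us, because it both makes $\nabla_\eta\phi_\eps$ $\eta$-independent and renders $\chi_{t,\eps}$ positively homogeneous of degree one in the cotangent variable, so convergence on the cosphere bundle automatically upgrades to convergence on the full cone. Bijectivity $(iii)$ is used only to ensure that $\chi_t^{-1}(\WF_{\Ginf}u_0)$ is a well-defined conic subset of $\CO{\R^n}$.
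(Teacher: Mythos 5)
Your proof follows essentially the same route as the paper's: both reduce the statement to Theorem~\ref{theo_MascaRo} and verify the implication \eqref{impl_imp} by translating the locally uniform convergence $\chi_{t,\eps}\to\chi_t$ of hypothesis $(ii)$ into the existence of conic neighbourhoods $Z$ of $(x_0,\xi_0)$ and $W$ of $\chi_t(x_0,\xi_0)$ with $\chi_{t,\eps}(Z)\subseteq W$ for small $\eps$, which yields the required lower bound on $|y-\nabla_\eta\phi_\eps(t,x,\eta)|$ with a constant (hence slow scale) net. Your additional remark that the linearity of $\phi$ in $\eta$ upgrades convergence on the cosphere to convergence on the full cone makes explicit a point the paper leaves implicit, but the argument is the same.
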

\begin{proof}
We denote the linear map corresponding to the Jacobian $(\partial_j\gamma_{i,\eps}(x,t,0))_{i,j}$ by $\Gamma_\eps(t,x)$. The invertibility of $\Gamma_\eps(t,x)$ allows us to solve the Hamilton-Jacobi system above with respect to the variable $(y,\eta)$. More precisely we have
\[
\chi_{t,\eps}(x,\xi)=(\gamma_{1,\eps}(x,t,0),...,\gamma_{n,\eps}(x,t,0), (\Gamma_\eps^{\ast})^{-1}(t,x)\xi)
\]
and
\[
\chi^{-1}_{t,\eps}(y,\eta)=(\gamma_{1,\eps}(y,0,t),...,\gamma_{n,\eps}(y,0,t); \Gamma_\eps^{\ast}(t,\gamma_\eps(y,0,t)) \eta),
\]
where $\gamma_\eps(y,0,t)=(\gamma_{1,\eps}(y,0,t),...,\gamma_{n,\eps}(y,0,t))$.

Let $(x_0,\xi_0)\in \CO{\R^n}$ and $(y_0,\eta_0)=\chi_t(x_0,\xi_0)$. The limit in $(ii)$ entails the following assertion: for all conic neighborhoods $W$ of $(y_0,\eta_0)$ there exists a conic neighborhood $Z$ of $(x_0,\xi_0)$ and some $\eps_0\in(0,1]$ such that $\chi_{t,\eps}(x,\xi)\in W$ for all $(x,\xi)\in Z$ and $\eps\in(0,\eps_0]$. In other words, if $(x,\nabla_x\phi_\eps(t,x,\eta))\in Z$ and $\eps\in(0,\eps_0]$, then $(\nabla_\eta\phi_\eps(t,x,\eta),\eta)\in W$. Thus, $(x,\nabla_x\phi_\eps(t,x,\eta))\in Z$ and $\eps\in(0,\eps_0]$ imply for all $W_1$ with $\overline{W}\subseteq W_1$ the assertion
\[
\exists c>0\, \forall (y,\eta)\not\in W_1\qquad\quad |y-\nabla_\eta\phi_\eps(t,x,\eta)|\ge c.
\]
Combining the hypothesis $(i)$ with this result we are under the assumptions of Theorem \ref{theo_MascaRo} for fixed $t$. Therefore, $(y_0,\eta_0)=\chi_t(x_0,\xi_0)\not\in\WF_{\Ginf}u_0$ implies $(x_0,\xi_0)\not\in\WF_{\Ginf}u(t,\cdot)$ or in other words
\beq
\label{micro_incl_1}
\WF_{\Ginf}(u(t,\cdot))\subseteq \chi_t^{-1}(\WF_{\Ginf}u_0).
\eeq
\end{proof}
\begin{remark}
\label{rem_GH05}
When the coefficients $a_{1,j}$ and $a_0$ are $\Cinf$-functions Theorem \ref{theo_claudia_flow} recovers the propagation of singularities result obtained in \cite[Proposition 4.3]{GH:05}. In this case arguing by time reversal one proves the inclusion $\chi_t^{-1}(\WF_{\Ginf}u_0)\subseteq\WF_{\Ginf}u(t,\cdot)$ and therefore $\WF_{\Ginf}u(t,\cdot)=\chi_t^{-1}(\WF_{\Ginf}u_0)$.
\end{remark}

\subsection{Examples}
This subsection contains some examples of first order hyperbolic Cauchy problems whose principal part depends only on $t$. Applying the results of the previous sections we can express the Colombeau solution by means of a generalised FIO formula and perform a microlocal investigation.
\begin{proposition}
\label{prop_princ_t}
Let $u\in\G(\R^{n+1})$ be the unique solution of the Cauchy problem
\[
\begin{split}
 D_t u &=\sum_{j=1}^n a_{1,j}(t)D_j u+a_0(t,x)u, \\
 u(0,\cdot)&=u_0\in\Gc(\R^n).
\end{split}
\]
Under the hypotheses $(h2)$ and $(i2)$ let $\phi(t,x,\eta)=\sum_{h=1}^n\gamma_h(x,t,0)\eta_h$ be the solution of the eikonal equation \eqref{eikonal_0} and let $b(t,x)\in\Ginf(\R^{n+1})$ be as in \eqref{FIO_formula}. If there exists a choice of representatives $(a_{1,j,\eps})_\eps$ such that $\Lambda_{j,\eps}(t)=-\int_0^t a_{1,j,\eps}(z)\, dz$, $j=1,...,n$ depends continuously on $\eps$ and
\[
\ \ \ \ \ \ \ \  \ \ \ \ \ \ \ \ \ \lim_{\eps\to 0}\Lambda_{j,\eps}(t)=\Lambda_j(t),\qquad\qquad\qquad\quad \text{for all $t\in\R$, $j=1,...,n$},
\]
then
\[
\WF_{\Ginf}u(t,\cdot)=\chi_t^{-1}(\WF_{\Ginf}u_0),
\]
where
\[
\chi_t^{-1}(y,\eta)=(y_1+\Lambda_1(t),...,y_n+\Lambda_n(t),\eta).
\]
\end{proposition}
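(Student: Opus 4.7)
The plan is to apply Theorem \ref{theo_claudia_flow} twice: once to the original Cauchy problem for the forward inclusion $\WF_{\Ginf}u(t,\cdot)\subseteq\chi_t^{-1}(\WF_{\Ginf}u_0)$, and once to the time-reversed Cauchy problem for the reverse inclusion $\chi_t^{-1}(\WF_{\Ginf}u_0)\subseteq\WF_{\Ginf}u(t,\cdot)$.

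\emph{Forward inclusion.} Since $a_{1,j}$ depends only on $t$, the characteristic system \eqref{charac_n} integrates explicitly to $\gamma_{h,\eps}(x,t,0)=x_h+\int_0^t a_{1,h,\eps}(\sigma)\,d\sigma=x_h-\Lambda_{h,\eps}(t)$, so that Proposition \ref{prop_phase_PDE} gives the phase $\phi_\eps(t,x,\eta)=(x-\Lambda_\eps(t))\cdot\eta$. Then $\nabla_x\phi_\eps\equiv\eta$, the Jacobian $(\partial_j\gamma_{i,\eps}(x,t,0))_{i,j}$ is the identity, and the Hamilton--Jacobi system reduces to the closed form $\chi_{t,\eps}(x,\xi)=(x-\Lambda_\eps(t),\xi)$. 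The infimum appearing in hypothesis (i) of Theorem \ref{theo_claudia_flow} equals $1$ (trivially slow scale strictly non-zero); continuity of $\Lambda_{j,\eps}(t)$ in $\eps$ combined with the pointwise limit $\Lambda_\eps(t)\to\Lambda(t)$ yields uniform convergence of $\chi_{t,\eps}$ to $\chi_t(x,\xi):=(x-\Lambda(t),\xi)$ on compact subsets of $\CO{\R^n}$, since the difference is a constant vector in the first $n$ coordinates; and $\chi_t$ is manifestly a bijection with inverse $\chi_t^{-1}(y,\eta)=(y+\Lambda(t),\eta)$. All hypotheses are therefore fulfilled, and Theorem \ref{theo_claudia_flow} yields the forward inclusion.

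\emph{Reverse inclusion via time reversal.} I would set $v(s,x):=u(t-s,x)$, which satisfies the Cauchy problem
\begin{align*}
D_s v(s,x)&=\sum_{j=1}^n (-a_{1,j}(t-s))\,D_j v(s,x)+(-a_0(t-s,x))\,v(s,x),\\
v(0,\cdot)&=u(t,\cdot)\in\Gc(\R^n).
\end{align*}
The time-reversed coefficients inherit $(h2)$ and $(i2)$ verbatim: composition with the affine map $s\mapsto t-s$ together with a sign change preserves real-valuedness, slow scale regularity, compact $x$-support and the slow-scale log-type properties of the relevant derivatives. Repeating the previous analysis for this backward problem, the new characteristic integrates to $\tilde\gamma_h(x,s,0)=x_h+\Lambda_h(t)-\Lambda_h(t-s)$, giving the backward Hamiltonian flow $\tilde\chi_s(x,\xi)=(x+\Lambda(t)-\Lambda(t-s),\xi)$; at the terminal time this becomes $\tilde\chi_t(x,\xi)=(x+\Lambda(t),\xi)=\chi_t^{-1}(x,\xi)$. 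Since $u(t,\cdot)\in\Gc(\R^n)$ by Theorem \ref{theo_first_order_equ}(ii) and can play the role of the initial datum, Theorem \ref{theo_claudia_flow} applied to $v$ at $s=t$ delivers
\[
\WF_{\Ginf}u_0=\WF_{\Ginf}v(t,\cdot)\subseteq\tilde\chi_t^{-1}\bigl(\WF_{\Ginf}u(t,\cdot)\bigr)=\chi_t\bigl(\WF_{\Ginf}u(t,\cdot)\bigr),
\]
which rearranges to the desired reverse inclusion $\chi_t^{-1}(\WF_{\Ginf}u_0)\subseteq\WF_{\Ginf}u(t,\cdot)$.

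\emph{Main obstacle.} The conceptual key is recognising that time reversal sidesteps the only genuinely delicate point, namely a translation-invariance lemma for the $\Ginf$-wavefront set under a convergent net $(\Lambda_\eps(t))_\eps$ of real translations; such a lemma could otherwise be proved directly from the explicit formula $u_\eps(t,x)=b_\eps(t,x)\,u_{0,\eps}(x-\Lambda_\eps(t))$ and the invertibility of $b=\esp^{i\beta}$, but would require a careful Fourier-side analysis with $\eps$-dependent cutoffs $\varphi(\cdot+\Lambda_\eps(t))$ whose supports vary with $\eps$. The remaining technical task---checking that the backward coefficients still fulfil $(h2)$--$(i2)$---is routine, and with this in hand Theorem \ref{theo_claudia_flow} does all the heavy lifting for both inclusions.
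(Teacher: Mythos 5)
Your proposal is correct and follows essentially the same route as the paper: explicit integration of the characteristics to get $\phi_\eps(t,x,\eta)=(x-\Lambda_\eps(t))\cdot\eta$, verification of the hypotheses of Theorem \ref{theo_claudia_flow} for the forward inclusion, and a time-reversal argument (with the reversed flow $\wt{\chi}_t=\chi_t^{-1}$) for the reverse inclusion. The only difference is cosmetic — you make explicit the checks (that the reversed coefficients satisfy $(h2)$--$(i2)$ and that $u(t,\cdot)\in\Gc(\R^n)$ can serve as initial datum) that the paper leaves implicit.
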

\begin{proof}
The components $\gamma_{j,\eps}(x,t,s)$ of the characteristic curves are the solutions of the Cauchy problems
\[
\frac{d}{ds}\gamma_{j,\eps}(x,t,s)=-a_{1,j,\eps}(s),\qquad\qquad \gamma_{j,\eps}(x,t,t)=x_j.
\]
Hence,
\[
\gamma_{j,\eps}(x,t,s)=\Lambda_{j,\eps}(s)+x_j-\Lambda_{j,\eps}(t)
\]
and
\[
\phi_\eps(t,x,\eta)=\sum_{j=1}^n(x_j-\Lambda_{j,\eps}(t))\eta_j.
\]
It follows that the hypotheses $(i)$ and $(ii)$ of Theorem \ref{theo_claudia_flow} are satisfied with $\chi_{t,\eps}(x,\xi)$ given by
\begin{align*}
\xi&=\eta,\\
y&=\gamma_\eps(x,t,0)=x-\Lambda_\eps(t).
\end{align*}
Now
\[
\chi_{t}(x,\xi)=\lim_{\eps\to 0}\chi_{t,\eps}(x,\xi)=(x-\Lambda(t),\xi)
\]
is a bijection on $\CO{\R^n}$ with $\chi_{t}^{-1}(y,\eta)=(y+\Lambda(t),\eta)$. From Theorem \ref{theo_claudia_flow} we have
\[
\WF_{\Ginf}(u(t,\cdot))\subseteq\chi_{t}^{-1}(\WF_{\Ginf}u_0)=\{(y+\Lambda(t),\eta):\ (y,\eta)\in\WF_{\Ginf}u_0\}.
\]
We will now argue by time reversal. We fix $t_0\in\R$ and we set $v(t,x)=u(t_0-t,x)$. From the original Cauchy problem we obtain
\[
D_t v=-\sum_{j=1}^n a_{1,j}(t_0-t)D_j v(t,x)-a_0(t_0-t,x)v
\]
with initial condition $v(0,x)=u(t_0,x)$. The corresponding characteristic curves are
\[
\gamma_{j,\eps}(x,t,s)=x_j+\int_{0}^s a_{j,\eps}(t_0-z)\, dz -\int_{0}^t a_{j,\eps}(t_0-z)\, dz
\]
and therefore
\[
\gamma_{j,\eps}(x,t,0)=x_j-\Lambda_{j,\eps}(t_0-t)+\Lambda_{j,\eps}(t_0).
\]
In particular, denoting the corresponding Hamiltonian flow by $\wt{\chi}_{t_0,\eps}^{-1}$, we have
\[
\wt{\chi}_{t_0,\eps}(x,\xi)=(x+\Lambda_\eps(t_0),\xi),\qquad \wt{\chi}_{t_0,\eps}^{-1}(y,\eta)=(y-\Lambda_\eps(t_0),\eta).
\]
Moreover
\[
\wt{\chi}_{t_0}(x,\xi):=\lim_{\eps\to 0}\wt{\chi}_{t_0,\eps}(x,\xi)=\lim_{\eps\to 0}(x+\Lambda_\eps(t_0),\xi)=(x+\Lambda(t_0),\xi)
\]
and
\[
\wt{\chi}_{t_0}^{-1}(y,\eta):=\lim_{\eps\to 0}\wt{\chi}_{t_0,\eps}^{-1}(y,\eta)=(y-\Lambda(t_0),\eta).
\]
Hence,
\[
\WF_{\Ginf}(v(t_0,\cdot))\subseteq \wt{\chi}_{t_0}^{-1}(\WF_{\Ginf}(v(0,\cdot)))
\]
or in other words
\[
\WF_{\Ginf}(u_0)\subseteq\wt{\chi}_{t_0}^{-1}(\WF_{\Ginf}(u(t_0,\cdot)))
\]
which implies
\[
\wt{\chi}_{t_0}(\WF_{\Ginf}(u_0))\subseteq \WF_{\Ginf}(u(t_0,\cdot)).
\]
Since $\wt{\chi}_{t_0}=\chi_{t_0}^{-1}$ we deduce
\[
\chi_{t_0}^{-1}(\WF_{\Ginf}u_0)\subseteq \WF_{\Ginf}(u(t_0,\cdot)).
\]
In conclusion,
\[
\WF_{\Ginf}u(t,\cdot)=\chi_t^{-1}(\WF_{\Ginf}u_0).
\]
\end{proof}
\begin{example}
\label{ex_MO}
{\bf A transport equation with discontinuous coefficient and distributional data.}\\
We continue studying the Cauchy problem from Example~\ref{rem_ex_MO}. By applying the previous proposition we can undertake a microlocal investigation of the solution $u\in\G([0,+\infty)\times\R)$. Using the nets and notations of Example~\ref{rem_ex_MO} we easily see that the assumptions of Proposition \ref{prop_princ_t} are fulfilled. In particular, the Hamiltonian flow is given by
\[
\chi_{t,\eps}(x,\xi)=(x-\Lambda_\eps(t),\xi),\qquad\qquad \chi_{t,\eps}^{-1}(y,\eta)=(y+\Lambda_\eps(t),\eta),
\]
with $\Lambda_\eps(t)$ defined in Example~\ref{rem_ex_MO} and having limit
\[
\Lambda(t)=\begin{cases} 0, & 0\le t\le 1,\\
t-1, & t\ge 1,\\
\end{cases}
\]
Hence
\[
\chi_t(x,\xi):=\lim_{\eps\to 0}\chi_{t,\eps}(x,\xi)=(x-\Lambda(t),\xi),\qquad\qquad \chi_t^{-1}(y,\eta)=\lim_{\eps\to 0}\chi_{t,\eps}^{-1}(y,\eta)=(y+\Lambda(t),\eta).
\]
Concluding, for each $t\ge 0$ we have
\[
\WF_{\Ginf}(u(t,\cdot))=\chi_t^{-1}(\WF_{\Ginf}\iota(\delta))=\{(\Lambda(t),\eta):\ \eta\neq 0\}.
\]
\end{example}
\begin{example}
{\bf An example in higher dimension.}\\
We can adapt the previous example to higher space dimensions. For instance, let
\begin{align*}
D_t u(t,x_1,x_2)&=-a_1H(t-1)D_{x_1} u(t,x_1,x_2)-a_2H(t-1)D_{x_2}u(t,x_1,x_2),\\
u(0,x)&=\psi(x_1,x_2)\delta_C(x_1,x_2),
\end{align*}
where $a_1,a_2\in\R$, $\delta_C$ is the Dirac measure along a smooth, simple curve $C\in\R^2$, and $\psi\in\D(\R^2)$. In this case the components of the characteristic curves are given by
\[
\gamma_{1,\eps}(x_1,t,s)=a_1\Lambda_\eps(s)+x_1-a_1\Lambda_\eps(t),\qquad \gamma_{2,\eps}(x_2,t,s)=a_2\Lambda_\eps(s)+x_2-a_2\Lambda_\eps(t)
\]
with $\Lambda_\varepsilon$ as above. This yields
\[
\chi_{t,\eps}(x,\xi)=((x_1-a_1\Lambda_\eps(t),x_2-a_2\Lambda_\eps(t)),(\xi_1,\xi_2)),\qquad \chi_{t,\eps}^{-1}(y,\eta)=((y_1+a_1\Lambda_\eps(t),y_2+a_2\Lambda_\eps(t)),(\eta_1,\eta_2))
\]
and the limit
\[
\chi_t^{-1}(y,\eta)=((y_1+a_1\Lambda(t),y_2+a_2\Lambda(t)),(\eta_1,\eta_2)).
\]
In conclusion,
\begin{align*}
\WF_{\Ginf}(u(t,\cdot))&=\chi_t^{-1}(\WF_{\Ginf}(\psi\delta_C))\\
&=\{(x_1+a_1\Lambda(t),x_2+a_2\Lambda(t),\xi_1,\xi_2):\, (x_1,x_2)\in\supp\psi,\ (\xi_1,\xi_2)\in N(x_1,x_2)\}
\end{align*}
where $N(x_1,x_2)$ denotes the set of nonzero conormal directions of $C$.
\end{example}
\begin{example}
{\bf An example with the Dirac measure as a coefficient.}\\
We consider the Cauchy problem
\begin{align*}
D_t u(t,x)&=-\delta(t-1)D_x u(t,x),\\
u(0,x)&=u_0(x),
\end{align*}
with $u_0\in\Gc(\R)$. In the regularisation of $\delta(t-1)$ via convolution with a mollifier we use $\rho\in\D(\R)$ as in Example~\ref{rem_ex_MO} and a slow scale net $(\omega^{-1}(\eps))_\eps$ with $0<\omega(\eps)<1$ and $\lim_{\eps\to 0}\omega(\eps)=0$.
The solution is represented by
\[
   u_\varepsilon(t,x) = u_{0\varepsilon}(x-\lambda_\varepsilon(t))
\]
with $\lambda_\varepsilon(t)$ as in Example~\ref{rem_ex_MO}.
Since  $\supp\rho\subseteq[-1,1]$ and $\int_{-1}^0\rho(z)\, dz=1/2$ we have that $\lambda_\varepsilon(1) = \frac12$ for all $\eps\in(0,1]$ and thus
\[
\lim_{\eps\to 0}\lambda_{\eps}(t)=:\lambda(t)=\begin{cases} 1, & t>1,\\
\frac{1}{2}, & t=1,\\
0, & t< 1.\\
\end{cases}
\]
By Proposition \ref{prop_princ_t},
\[
\WF_{\Ginf}(u(t,\cdot))=\{(x+\lambda(t),\xi):\ (x,\xi)\in\WF_{\Ginf}u_0\}
\]
for all $t\in\R$. Note that the limits of the flows $\chi_{t,\varepsilon}(x,\xi)$ are homeomorphisms at each fixed $t$, while the limiting two-dimensional characteristic coordinate change $(t,x) \to (t,x-\lambda(t))$ is still bijective, but no longer continuous. Such a situation is admitted by Proposition \ref{prop_princ_t}.
\end{example}

\bibliographystyle{abbrv}
\newcommand{\SortNoop}[1]{}

\end{document}